\author{Léo Morin}
\title[A Semiclassical Birkhoff Normal Form]{A Semiclassical Birkhoff Normal Form for symplectic magnetic wells}
\address{Univ Rennes, CNRS, IRMAR - UMR 6625, F-35000 Rennes, France}
\keywords{magnetic Laplacian, normal form, spectral theory, semiclassical limit, pseudo differential operators, microlocal analysis}
\newtheorem*{theorem*}{Theorem}
\newtheorem{theorem}{Theorem}[section]
\newtheorem{lemma}{Lemma}[section]
\newtheorem{corollary}{Corollary}[section]
\newtheorem{proposition}{Proposition}[section]
\newtheorem{assumption}{Assumption}
\newcommand{\A}{\mathbf{A}}
\newcommand{\B}{\mathbf{B}}
\newcommand{\R}{\mathbf{R}}
\newcommand{\N}{\mathbf{N}}
\newcommand{\Z}{\mathbf{Z}}
\newcommand{\dd}{\mathrm{d}}
\newcommand{\zz}{\hat{z}}
\newcommand{\bbeta}{\hat{\beta}}
\newcommand{\grandO}{\mathcal{O}}
\newcommand{\supp}{\mathsf{supp}}
\newcommand{\spectre}{\mathsf{sp}}
\newcommand{\Hess}{\mathsf{Hess}}
\newcommand{\Op}{\mathsf{Op}^w_{\hbar}}
\newcommand{\weylsymbol}{\sigma_{\hbar}^{w}}
\newcommand{\symbolseries}{\sigma_{\hbar}^{w,\mathsf{T}}}
\newcommand{\psit}{\tilde{\psi}}
\newcommand{\Ld}{\mathsf{L}^{2}}
\newcommand{\Bh}{\mathcal{B}_{\hbar}}
\newcommand{\Uh}{U_{\hbar}}
\newcommand{\Uht}{\tilde{U}_{\hbar}}
\newcommand{\Rh}{R_{\hbar}}
\newcommand{\Ih}{\mathcal{I}_{\hbar}}
\newcommand{\Lh}{\mathcal{L}_{\hbar}}
\newcommand{\Lhc}{\widehat{\mathcal{L}}_{\hbar}}
\newcommand{\BNh}{\mathcal{N}_{\hbar}}
\newcommand{\ad}{\mathsf{ad}}
\begin{document}

\maketitle

\begin{abstract}
In this paper we construct a Birkhoff normal form for a semiclassical magnetic Schrödinger operator with non-degenerate magnetic field, and discrete magnetic well, defined on an even dimensional riemannian manifold $M$. We use this normal form to get an expansion of the first eigenvalues in powers of $\hbar^{1/2}$, and semiclassical Weyl asymptotics for this operator.
\end{abstract}

\section{\textbf{Introduction}}

The analysis of the magnetic Schrödinger operator, or magnetic Laplacian, on a Riemannian manifold $$\Lh = (i\hbar \dd + A)^* (i\hbar \dd + A)$$ in the semiclassical limit $\hbar \rightarrow 0$ has given rise to many investigations in the last twenty years. Asymptotic expansions of the lowest eigenvalues have been studied in many cases involving the geometry of the possible boundary of $M$ and the variations of the magnetic field. For discussions about the subject, the reader is referred to the books and review \cite{FoHe10}, \cite{HeKo14}, \cite{livre}. The classical picture associated with the Hamiltonian $$\vert p - A(q) \vert^2$$ has started being investigated to describe the semiclassical bound states (the eigenfunctions of low energy) of $\Lh$, in \cite{article} (on $\R^2$) and \cite{3D} (on $\R^3$). In these two papers, semiclassical Birkhoff normal forms were used to describe the first eigenvalues. In \cite{Sjo92}, Sjöstrand introduced the semiclassical Birkhoff normal form to study the spectrum of an electric Schrödinger operator, and some resonance phenomenons appeared. In \cite{San}, the resonant case for the same electric Schrödinger operator was tackled (see also \cite{San06} and \cite{San09}). In this paper, we adapt this method to $\Lh$, following the ideas of \cite{article}. Some normal forms for magnetic Schrödinger operators also appear in \cite{Ivrii}. On a Riemannian manifold $M$, the magnetic Schrödinger operator is related to the Bochner Laplacian (see the recent papers \cite{Ko18} and \cite{MaSa18}, where bounds and asymptotic expansions of the first eigenvalues of Bochner Laplacians are given).\\

In this paper we get an expansion of the first eigenvalues of $\Lh$ in powers of $\hbar^{1/2}$, and semiclassical Weyl asymptotics. It would be interesting to have a precise description of the eigenfunctions too, as was done in the 2D case by Bonthonneau-Raymond \cite{BonRa17} (euclidian case) and Nguyen Duc Tho \cite{Tho} (general riemannian metric). Moreover, we only have investigated the spectral theory of the stationary Schrödinger equation with a pure magnetic field ; it would be interesting to describe the long-time dynamics of the full Schrödinger evolution, as was done in the euclidian 2D case by Boil-Vu Ngoc \cite{BoiVu}.

\subsection{Definition of the magnetic Schrödinger operator} 

Let $(M,g)$ be a smooth $d$ dimensional oriented Riemannian manifold, either without boundary or with smooth boundary. In particular we can take $M= \R^d$ with the Euclidean metric, or $M$ compact with boundary.
For $q \in M$, $g_q$ is a scalar product on $T_qM$. Since $M$ is oriented, there is a canonical volume form, denoted either $\dd x_g$ or $\dd q_g$. If $f \in \Ld (M)$, we denote its norm by $$\Vert f \Vert =  \left( \int_M \vert f(q) \vert^2 \dd q_g \right)^{1/2}.$$ If $p \in T_qM^*$, we denote by $\vert p \vert_{g^{\star}_q}$ or $\vert p \vert$ the norm of $p$, defined by
\begin{align}\label{defgstar}
\forall Q \in T_qM, \quad \vert Q \vert^2_{g_q} = \vert g_q(Q,.) \vert^2_{g^*_q}.
\end{align}
We denote by $g_q^*$ the associated scalar product. The norm of a $1$-form $\alpha$ on $M$ is 
$$\Vert \alpha \Vert = \left( \int_M \vert \alpha(q) \vert^2 \dd {q_g} \right)^{1/2}.$$
It is associated with a scalar product, denoted by brackets $\langle . , . \rangle$. 

We denote by $\dd$ the exterior derivative, associating to any $p$-form $\alpha$ a $(p+1)$-form $\dd \alpha$. Using the scalar products induced by the metric, we can define its adjoint $\dd^*$, associating to any $p$-form $\alpha$ a $(p-1)$-form $\dd^* \alpha$.\\

We take a $1$-form $A$ on $M$ called the magnetic potential, and we denote by $B = \dd A$ its exterior derivative. $B$ is called the magnetic $2$-form. The associated classical Hamiltonian is defined on $T^*M$ by:
$$H(q,p) = \vert p - A(q) \vert^2_{g^*_q}, \quad p \in T_qM^*.$$

Using the isomorphism $T_qM \simeq T_qM^*$ given by the metric, we define the magnetic operator $\B(q):T_qM \rightarrow T_qM$ by:
\begin{align}\label{defBmatrix}
B_q(Q_1,Q_2) = g_q(\B(q)Q_1,Q_2),\quad \forall Q_1, Q_2 \in T_qM.
\end{align}
The norm of $\B(q)$ is
$$\vert \B(q) \vert = [ \text{Tr}( \B^*(q) \B(q) ) ]^{1/2}.$$

On the quantum side, for $\hbar>0$, we define the magnetic quadratic form $q_{\hbar}$ on
$$D(q_{\hbar}) = \lbrace u \in \Ld (M), (i\hbar d + A)u \in \Ld \Omega^1(M), u_{\partial M} = 0 \rbrace,$$
by
$$q_{\hbar}(u) = \int_M \vert (i \hbar d + A )u \vert^2 \dd q_g,$$
where $\Ld \Omega^1(M)$ denotes the space of square-integrable $1$-forms on $M$. By the Lax-Milgram theorem, this quadratic form defines a self-adjoint operator $\Lh$ on $$D(\Lh) = \lbrace u \in \Ld (M), (i\hbar \dd + A)^*(i\hbar \dd + A) u \in \Ld (M), u_{\partial M} = 0 \rbrace,$$
by the formula
$$\langle \Lh u, v \rangle = q_{\hbar}[u,v], \quad \forall u,v \in \mathcal{C}^{\infty}_0(M),$$
where $q_{\hbar}[.,.]$ is the inner product associated with the quadratic form $q_{\hbar}(.)$. $\Lh$ is the magnetic Schrödinger operator with Dirichlet boundary conditions.

\subsection{Local coordinates} \label{sectioncoordinates}

If we choose local coordinates $q=(q_1,...,q_d)$ on $M$, we get the corresponding vector fields basis $(\partial_{q_1}, ..., \partial_{q_d})$ on $T_qM$, and the dual basis $(\dd q_1 , ..., \dd q_d)$ on $T_qM^*$. In these basis, $g_q$ can be identified with a symmetric matrix $(g_{ij}(q))$ with determinant $\vert g \vert$, and $g^*_q$ is associated with the inverse matrix $(g^{ij}(q))$. We can write the $1$-form $A$ in the coordinates:
$$A \equiv A_1 \dd q_1 + ... + A_d \dd q_d,$$
with $\A = (A_j)_{1 \leq j \leq d} \in \mathcal{C}^{\infty}(\R^d,\R^d)$. We denote 
$$T_qA : T_qM \rightarrow T_qM^*$$
the linear operator whose matrix is the Jacobian of $\A$: $$(\nabla \A(q))_{ij} = \partial_j A_i(q).$$
In the coordinates, the $2$-form $B$ is
$$B= \sum_{i<j} B_{ij} \dd q_i \wedge \dd q_j,$$
with
\begin{align}\label{BenCoordonnees}
B_{ij} = \partial_i A_j - \partial_j A_i = (^t \nabla \A - \nabla \A)_{ij}.
\end{align}
Let us denote $(\B_{ij}(q))_{1 \leq i,j \leq d}$ the matrix of the operator $\B(q) : T_qM \rightarrow T_qM$ in the basis $(\partial q_1 , ..., \partial q_d)$. With this notation, equation (\ref{defBmatrix}) relating $\B$ to $B$ can be rewritten:
$$\forall Q,\tilde{Q} \in \R^d, \quad \sum_{ijk} g_{kj} \B_{ki} Q_i \tilde{Q}_{j} = \sum_{ij} B_{ij} Q_i \tilde{Q}_j,$$
which means that
\begin{align} \label{relationBBmatrice}
\forall i,j, \quad B_{ij} = \sum_k g_{kj} \B_{ki}.
\end{align}
Also note that:
\begin{small}
\begin{align}\label{iQBCoordonees}
\iota_Q B &= \sum_{i<j} B_{ij} \left( Q_i \dd q_j - Q_j \dd q_i \right) = \sum_j \left( \sum_i B_{ij}Q_i \right) \dd q_j \\ &=  \sum_j \left[ (\ ^t \nabla \A - \nabla \A ) Q \right]_j \dd q_j = (\ ^t T_qA - T_qA ) Q
\end{align}
\end{small}
Finally, in the coordinates $H$ is given by:
\begin{align}\label{HamiltonienCoordonnees}
H(q,p) = \sum_{i,j} g^{ij}(q) (p_i - A_i(q))(p_j - A_j(q)),
\end{align}
and $\Lh$ acts as the differential operator:
\begin{align}\label{Lhcoord}
\Lh^{\text{coord}} = \sum_{k,l =1}^d \vert g \vert^{-1/2} (i\hbar \partial_k + A_k) g^{kl} \vert g \vert^{1/2} (i\hbar \partial_l+A_l).
\end{align}

\subsection{Pseudodifferential operators}

We refer to \cite{Martinez} and \cite{Zworski} for the general theory of $\hbar$-pseudodifferential operators. If $m\in \Z$, we denote by $$S^m(\R^{2n})= \lbrace a \in \mathcal{C}^{\infty}(\R^{2n}), \vert \partial^{\alpha}_x \partial^{\beta}_{\xi} a \vert \leq C_{\alpha \beta} \langle \xi \rangle ^{m-\vert \beta \vert}, \quad \forall \alpha, \beta \in \N^d \rbrace$$ the class of Kohn-Nirenberg symbols. If $a$ depends on the semiclassical parameter $\hbar$, we require that the coefficients $C_{\alpha \beta}$ are uniform with respect to $\hbar \in (0, \hbar_0]$. For $a_{\hbar} \in S^m(\R^{2n})$, we define its associated Weyl quantization $\Op(a_{\hbar})$ by the oscillatory integral
$$\mathcal{A}_{\hbar}u(x) = \Op (a_{\hbar})u(x)= \frac{1}{(2\pi \hbar)^n} \int _{\R^{2n}} e^{\frac{i}{\hbar} \langle x-y, \xi \rangle } a_{\hbar}\left(\frac{x+y}{2},\xi \right) u(y)\dd y \dd \xi,$$ and we denote:
$$a_{\hbar} = \sigma_{\hbar}(\mathcal{A}_{\hbar}).$$
A pseudodifferential operator $\mathcal{A}_{\hbar}$ on $\Ld(M)$ is an operator acting as a pseudodifferential operator in coordinates. Then the principal symbol of $\mathcal{A}_{\hbar}$ does not depend on the coordinates, and we denote it by $\sigma_0(\mathcal{A}_{\hbar}).$ The subprincipal symbol $\sigma_1(\mathcal{A}_{\hbar})$ is also well-defined, up to imposing the charts to be volume-preserving (in other words, if we see $\mathcal{A}_{\hbar}$ as acting on half-densities, its subprincipal symbol is well defined). 

In any local coordinates, the coefficients $A_j$ of $A$ (as a function of $q \in \R^d$) are in $S^1(\R^{2d}_{(q,p)})$. Hence we see from (\ref{Lhcoord}) that $\Lh$ is a pseudodifferential operator on $\Ld(M)$. Its principal and subprincipal Weyl symbols are:
$$\sigma_0(\Lh) = H, \quad \sigma_1(\Lh) = 0.$$
This is well-known, but we detail the computation of the subprincipal symbol in Appendix (Lemma \ref{SubprincipalSymbol}).

\subsection{Assumptions}
Since $\B(q)$, defined in (\ref{defBmatrix}), is a skew-symmetric operator for the scalar product $g_q$, its eigenvalues are in $i \R$.
We define the magnetic intensity, which is equivalent to the trace-norm, by
$$b(q) = \text{Tr}^+ \B(q) = \frac{1}{2}\text{Tr}( [ \B^*(q) \B(q) ]^{1/2}) = \sum_{i \beta_j \in \spectre(\B(q)), \beta_j >0} \beta_j.$$
It is a continuous function of $q$, but not smooth in general. We also denote
$$b_0 = \inf_{q \in M} b(q),$$
$$b_{\infty} = \liminf_{\vert q \vert \rightarrow + \infty} b(q).$$

We first assume that the magnetic field satisfies the following inequality.

\begin{assumption}\label{InequalityIntensity}
We assume that there exist $\hbar_0 >0$ and $C_0>0$ such that, for $\hbar \in (0, \hbar_0]$,
$$\forall u \in D(q_{\hbar}), \quad (1+\hbar^{1/4}C_0) q_{\hbar}(u) \geq \int_{M} \hbar (b(q) - \hbar^{1/4} C_0) \vert u(q) \vert^2 \dd q_g.$$
\end{assumption}

In the Appendix (Lemma \ref{manifoldinequality}), we describe cases when Assumption \ref{InequalityIntensity} holds. In particular, it holds if $M$ is compact. If $M=\R^d$, it is true if we assume that
$$\Vert \nabla \B_{ij}(q) \Vert \leq C (1 + \vert \B(q) \vert),$$
for some $C>0$. These results are adapted from \cite{HeMo96}. \\

We consider the case of a unique discrete magnetic well:

\begin{assumption}\label{dismagwell}
We assume that the magnetic intensity $b$ admits a unique and non-degenerate minimum $b_0$ at $q_0 \in M \setminus \partial M$, such that $0 < b_0 < b_{\infty}$.
\end{assumption}

Finally, we make a non-degeneracy assumption.

\begin{assumption}\label{nondegeneracy}
We assume that $d$ is even and $\B(q_0)$ is invertible. 
\end{assumption}

In particular, $\B(q)$ is invertible for $q$ in a neighborhood of $q_0$, which means that the $2$-form $B$ is symplectic near $q_0$.
Under this Assumption, the eigenvalues of $\B(q_0)$ can be written
$$ \pm i \beta_1(q_0), \ldots , \pm i \beta_{d/2}(q_0),$$ with $\beta_j(q_0) >0$. We define the resonance order $r_0 \in \N^* \cup \lbrace \infty \rbrace$ of the eigenvalues by
\begin{align}\label{defrzero}
r_0 := \min \lbrace  \vert \alpha \vert : \alpha \in \Z^{d/2}, \alpha \neq 0, \langle \alpha , \beta(q_0) \rangle = 0 \rbrace,
\end{align}
with the notation
$$\langle \alpha, \beta(q_0) \rangle := \sum_{j=1}^{d/2} \alpha_j \beta_j(q_0).$$
We make a non-resonance assumption.
\begin{assumption} We assume that the eigenvalues of $\B(q_0)$ are simple (which is equivalent to assuming that $r_0 \geq 3$).
\end{assumption}
In particular, there is a neighborhood $\Omega \subset \subset M \setminus \partial M$ of $q_0$ on which the eigenvalues of $\B(q)$ are simple, and defined by smooth positive functions
$$\beta_j : \Omega \rightarrow \R_+^*.$$
We can choose $\Omega$ such that every $\beta_j$ is bounded from bellow by a positive constant on $\Omega$. We can also find smooth orthonormal vectors on $\Omega$: $$u_1(q),v_1(q), \ldots , u_{d/2}(q), v_{d/2}(q) \in T_q M,$$
such that: 
\begin{align}\label{defujvj}
\B(q) u_j(q) = - \beta_j(q) v_j(q), \quad \B(q) v_j(q) = \beta_j(q) u_j(q).
\end{align}
\linebreak
We take 
\begin{align}\label{defr}
r \in \N \cap [3, r_0].
\end{align} Up to reducing $\Omega$ (depending on $r$), we also have (since $r$ is finite), for $0 < \vert \alpha \vert < r$:
\begin{align}\label{fffffrrr}
\langle \alpha , \beta (q) \rangle \neq 0, \quad \forall q \in \Omega.
\end{align}
Under Assumption \ref{dismagwell}, we can find $b_0 < \tilde{b}_1< b_{\infty}$ such that
\begin{align}\label{defb1tilde}
K:= \lbrace b(q) \leq \tilde{b}_1 \rbrace \subset \Omega.
\end{align}
Using the inequality in Assumption 1, it is proved in \cite{HeMo96} that there exist $\hbar_0$ and $c>0$ such that, for $\hbar \in (0, \hbar_0]$, 
$$\spectre_{ess} (\Lh) \subset [\hbar (\tilde{b}_1-c \hbar^{1/4}), + \infty ),$$
and so, for $\hbar$ small enough, the spectrum of $\Lh$ below $\hbar b_1$ (for a given $b_1 < \tilde{b}_1$) is discrete.\\

\subsection{Main results}

On the classical part, we first prove the following reduction of the Hamiltonian. For $z=(x,\xi) \in \R^d$, we denote $z_j=(x_j,\xi_j)$ and $B_z(\varepsilon) = \lbrace \vert z \vert \leq \varepsilon \rbrace.$

\begin{theorem}\label{symplectomorphism} Under Assumptions 1,2,3 and 4, for $\Omega$ and $\varepsilon >0$ small enough, there exist symplectomorphisms
$$\varphi : (\Omega,B) \rightarrow (V \subset \R_w^d, \dd \eta \wedge \dd y),$$
and
$$ \Phi : \left( V \times B_z(\varepsilon), \dd \eta \wedge \dd y + \dd \xi \wedge \dd x \right) \rightarrow (U \subset T^*M, \omega), $$ with $\Phi(\varphi(q),0) = (q,A(q))$, under which the Hamiltonian $H$ becomes:
$$\hat{H}(w,z) = H \circ \Phi (w,z) = \sum_{j=1}^{d/2} \hat{\beta}_j(w) \vert z_j \vert^2 + \grandO (\vert z \vert^3),$$
locally uniformly in $w$, with the notation $\hat{\beta}_j(w) = \beta_j \circ \varphi^{-1}(w)$.
\end{theorem}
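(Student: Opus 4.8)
\emph{Sketch of the argument.} Since $H\ge 0$ and $H$ vanishes exactly on the graph $\Gamma_A:=\{(q,A(q)):q\in M\}\subset T^*M$, and since the pull-back of the canonical symplectic form $\omega$ by the map $q\mapsto(q,A(q))$ equals $B$ (up to a sign irrelevant here), $\Gamma_A$ is a symplectic submanifold of $(T^*M,\omega)$ near $p_0:=(q_0,A(q_0))$ by Assumption \ref{nondegeneracy} and continuity. The statement is thus a parametrized symplectic normal form for a Hamiltonian vanishing to second order, non-degenerately in the normal directions, along a symplectic submanifold, and the plan is to combine Darboux's theorem, Weinstein's symplectic tubular neighborhood theorem and Williamson's diagonalization of a positive quadratic form on a symplectic vector space, the last one with smooth dependence on a parameter. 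First I would invoke Darboux's theorem: shrinking $\Omega$, it produces a diffeomorphism $\varphi:\Omega\to V\subset\R^d_w$ with $\varphi^*(\dd\eta\wedge\dd y)=B$; this is the first symplectomorphism, and it identifies $(\Gamma_A,\omega|_{\Gamma_A})$ near $p_0$ with $(V,\dd\eta\wedge\dd y)$ via $w\mapsto(\varphi^{-1}(w),A(\varphi^{-1}(w)))$.

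Next I would analyse the quadratic part of $H$ transverse to $\Gamma_A$. Let $\nu:=(T\Gamma_A)^{\omega}$ be the symplectic normal bundle; because $H$ vanishes to exactly second order along $\Gamma_A$ near $p_0$, its normal Taylor coefficient is a well-defined positive section $\opQ$ of $\mathrm{Sym}^{2}\nu^{*}$. In cotangent coordinates $(q,p)$ one checks that the symplectic orthogonal of $T_{(q,A(q))}\Gamma_A$ is $\{(a,{}^{t}T_qA\,a):a\in\R^{d}\}$; on it the value of $p-A$ is, to first order, $\iota_aB$ by \eqref{iQBCoordonees}, so that, using \eqref{defBmatrix}, $\opQ(a,a')=\langle\B(q)a,\B(q)a'\rangle_{g_q}$ while $\omega$ restricts on $\nu$ to $(a,a')\mapsto\pm\langle\B(q)a,a'\rangle_{g_q}$. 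Consequently the endomorphism $F$ of $\nu$ determined by $\omega|_{\nu}(Fa,a')=\opQ(a,a')$ equals $\pm\B(q)$, whose eigenvalues are $\pm i\beta_1(q),\dots,\pm i\beta_{d/2}(q)$; hence the symplectic (Williamson) eigenvalues of $\opQ$ at $(q,A(q))$ are exactly $\beta_1(q),\dots,\beta_{d/2}(q)$, and $\opQ$ is positive definite for $q$ near $q_0$ because $\B(q_0)$ is invertible.

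Since the $\beta_j(q)$ are simple on $\Omega$ (Assumption 4), Williamson's theorem applied pointwise, together with that simplicity and smoothness, yields a smooth splitting $\nu=\bigoplus_{j=1}^{d/2}\nu_j$ into rank-$2$ symplectic subbundles and a symplectic trivialization $\tau:\nu\to V\times(\R^d_z,\dd\xi\wedge\dd x)$ --- it exists over the contractible base $V$ because the admissible normalizing trivializations form a torsor under the connected group $\prod_j\mathrm{SO}(2)$ --- in which $\opQ$ becomes $(w,z)\mapsto\sum_j\hat\beta_j(w)\,|z_j|^2$. Feeding this normal model $(\,V\times(\R^d_z,\dd\xi\wedge\dd x),\ \tau^{-1}\,)$ into Weinstein's symplectic tubular neighborhood theorem for $\Gamma_A\subset(T^*M,\omega)$ then produces, after shrinking $\Omega$ and fixing a small $\varepsilon$, the second symplectomorphism $\Phi:(V\times B_z(\varepsilon),\dd\eta\wedge\dd y+\dd\xi\wedge\dd x)\to(U\subset T^*M,\omega)$, which on $V\times\{0\}$ is $w\mapsto(\varphi^{-1}(w),A(\varphi^{-1}(w)))$ (so that $\Phi(\varphi(q),0)=(q,A(q))$) and whose normal derivative along $\{z=0\}$ is $\tau^{-1}$. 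Then $\hat H:=H\circ\Phi$ vanishes to second order on $\{z=0\}$ with normal quadratic part $\opQ\circ\tau^{-1}=\sum_j\hat\beta_j(w)|z_j|^2$, and Taylor's formula with the smoothness of $\hat H$ gives $\hat H(w,z)=\sum_j\hat\beta_j(w)|z_j|^2+\grandO(|z|^3)$, locally uniformly in $w$.

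The one delicate point is the construction of $\tau$: one cannot first apply Weinstein with an arbitrary symplectic trivialization of $\nu$ and then diagonalize the transverse Hessian by a $w$-dependent linear symplectic map on the $z$-fibers, because such a map does not preserve the total form $\dd\eta\wedge\dd y+\dd\xi\wedge\dd x$; Williamson's normalization must be built into the trivialization of $\nu$ from the start, which is legitimate precisely because the eigenvalues $\beta_j(q)$ are simple --- making the eigenbundle splitting and the normalizing frames smooth --- and because $V$ is contractible. The remaining ingredients (Darboux, Weinstein, and the coordinate identification of $\opQ$) are routine.
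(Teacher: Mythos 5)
Your proposal is correct and follows essentially the same route as the paper: identify $\Sigma=j(\Omega)$ as a symplectic submanifold with $j^*\omega=B$, compute the transverse Hessian of $H$ as $|\B(q)\cdot|^2_{g_q}$, build a smooth symplectic normalization of the normal bundle from the simple eigenvalues $\beta_j(q)$ (what you phrase as parametrized Williamson, the paper does concretely via the frame $e_j=\beta_j^{-1/2}(u_j,{}^tT_qA\,u_j)$, $f_j=\beta_j^{-1/2}(v_j,{}^tT_qA\,v_j)$), and feed this into a Weinstein-type tubular neighborhood (in the paper: the flow-constructed $\tilde\Phi_0$ corrected by the relative Darboux Lemma~\ref{RelativeDarboux}) before Taylor-expanding. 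Your "delicate point" about building the normalization into the trivialization before applying Weinstein is exactly how the paper orders the steps, via the prescribed linear part $L_q$ of $\tilde\Phi_0$.
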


Our next aim is to construct a semiclassical Birkhoff normal form for $\Lh$, that is to say a pseudodifferential operator $\BNh$ on $\Ld(\R^d)$, commuting with suitable harmonic oscillators such that:
$$\Uh \Lh \Uh^* = \BNh + \Rh,$$
with $\Uh : \Ld (M) \rightarrow \Ld(\R^d)$ a microlocally unitary Fourier integral operator and $\Rh$ a remainder. We will contruct the remainder so that the first eigenvalues of $\Lh$ coincide with the first eigenvalues of $\BNh$, up to a small error of order $\grandO(\hbar^{r/2 - \varepsilon})$, where $r$ is defined in (\ref{defr}). More precisely, we prove the following theorem.

\begin{theorem}[Semiclassical Birkhoff normal form]\label{ThmFormeNormale}
We denote by $z=(x,\xi)\in T^*\R^{d/2}_x$ and $w=(y,\eta)\in T^* \R^{d/2}_y$ the canonical variables. For $\zeta >0$ and $\hbar \in (0,\hbar_0]$ small enough, there exist a Fourier integral operator $$\Uh : \Ld (\R^d _{(x,y)}) \rightarrow \Ld (M),$$ a smooth function $f^{\star}(w,I_1,...,I_{d/2},\hbar)$, and a pseudodifferential operator $\Rh$ on $\R^d$ such that:
\begin{align*}
&(i)\quad \Uh^* \Lh \Uh = \Lh^0 + \Op f^{\star}(w, \Ih^{(1)}, ..., \Ih^{(d/2)}, \hbar) + \Rh,\\
&(ii)\quad (1-\zeta) \langle \Lh^0 \psi, \psi \rangle \leq \langle \BNh \psi , \psi \rangle \leq (1+\zeta) \langle \Lh^0 \psi, \psi \rangle, \quad \forall \psi \in \mathcal{S}(\R^d),\\
&(iii)\quad \weylsymbol (\Rh) \in \grandO((\vert z \vert + \hbar^{1/2})^{r}) \text{ on a neighborhood of } w=0,\\
&(iv) \quad  \Uh^* \Uh = I \text{ microlocally near } (z,w)=0, \\
&(v) \quad \Uh \Uh^* = I \text{ microlocally near } (q,p) = (q_0, A_{q_0}),
\end{align*}
with 
\begin{align}\label{DefOscillateurs}
\Ih^{(j)} = \Op( \vert z_j \vert^2 ) = -\hbar^2 \frac{\partial^2}{\partial x_j^2} + x_j^2,\quad \Lh^0 = \Op \left( \sum_{j=1}^{d/2} \hat{\beta}_j(w) \vert z_j \vert^2 \right).
\end{align}
We call $$\BNh = \Lh^0 + \Op f^{\star}(w, \Ih^{(1)}, ..., \Ih^{(d/2)}, \hbar)$$
the normal form, and $\Rh$ the remainder.
\end{theorem}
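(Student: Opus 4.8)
The plan is to quantize the classical normal form of Theorem \ref{symplectomorphism} and then perform an iterative, order-by-order cohomological reduction at the operator level. First I would use the symplectomorphism $\Phi$ from Theorem \ref{symplectomorphism} together with Egorov's theorem / the theory of Fourier integral operators associated to canonical transformations to build a microlocally unitary FIO $\Uh^{(0)}:\Ld(\R^d)\to\Ld(M)$ quantizing $\Phi$, so that $\Uh^{(0)*}\Lh\Uh^{(0)}$ is a pseudodifferential operator whose Weyl symbol equals $\hat H(w,z)+\hbar(\text{subprincipal})+\dots$; by Lemma \ref{SubprincipalSymbol} the subprincipal symbol of $\Lh$ vanishes, so after conjugation we get principal symbol $\sum_j\hat\beta_j(w)|z_j|^2+\grandO(|z|^3)$ and controlled lower-order terms. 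Properties $(iv)$ and $(v)$ will follow from the microlocal unitarity of $\Uh^{(0)}$ near the relevant base points, since $\Phi(\varphi(q),0)=(q,A(q))$ sends $(z,w)=0$ to $(q_0,A_{q_0})$.

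Next comes the Birkhoff iteration. I would expand the conjugated symbol in homogeneous components in the variables $(z,\hbar^{1/2})$ jointly — treating $\hbar^{1/2}$ as a formal small parameter of the same weight as $z$ — writing the operator as $\Lh^0+\sum_{k\ge 3}\opQ_k+\dots$ where $\opQ_k$ is quantization of a symbol homogeneous of degree $k$ in $(z,\hbar^{1/2})$, polynomial in $z$ with $w$-dependent coefficients. At step $k$ I conjugate by $\exp(\tfrac{i}{\hbar}\Op(G_k))$ with $G_k$ homogeneous of degree $k+2$; the commutator $[\Lh^0,\tfrac{i}{\hbar}\Op(G_k)]$ has principal part $\Op(\{\sum_j\hat\beta_j(w)|z_j|^2,G_k\})$, so I must solve the cohomological equation $\{\sum_j\hat\beta_j(w)|z_j|^2,G_k\}=\opQ_k^{\mathrm{bad}}$. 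Passing to complex coordinates $z_j^\pm$ diagonalizing the harmonic flow, the operator $\{\sum_j\hat\beta_j|z_j|^2,\cdot\}$ acts diagonally on monomials with eigenvalue $i\langle\alpha-\beta,\hat\beta(w)\rangle$; by the non-resonance condition (\ref{fffffrrr}), for $|\alpha-\beta|<r$ this is nonzero unless $\alpha=\beta$, i.e. unless the monomial depends only on the $|z_j|^2$. Hence all terms of degree $<r$ can be removed except those that are functions of the $|z_j|^2$ alone (with $w$- and $\hbar$-dependent coefficients), which after quantization become functions of the operators $\Ih^{(j)}$ — this produces the term $\Op f^{\star}(w,\Ih^{(1)},\dots,\Ih^{(d/2)},\hbar)$, with $f^\star=\grandO(|z|^2)$ so $\BNh$ genuinely commutes with each $\Ih^{(j)}$. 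Iterating up to degree $r-1$ and composing the $\Uh^{(0)}$ with all the $\exp(\tfrac{i}{\hbar}\Op(G_k))$ gives $\Uh$, and what remains is $\Rh$ with Weyl symbol $\grandO((|z|+\hbar^{1/2})^r)$ near $w=0$, which is $(iii)$; the estimate $(ii)$ comparing $\BNh$ and $\Lh^0$ follows because $f^\star$ contributes a relatively bounded perturbation of size $\grandO(|z|^3+\hbar^{1/2}|z|^2)$, hence small compared to $\Lh^0\sim\sum\hat\beta_j\Ih^{(j)}$ on the relevant spectral window, after a suitable localization in $w$.

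The main obstacle, I expect, is the bookkeeping of the semiclassical parameter in the cohomological equation and the need to keep everything microlocalized near $(z,w)=0$. Concretely: (a) the symbols $G_k$ are only defined on a neighborhood of $w=0$ and polynomial (hence of polynomial growth) in $z$, so one must cut them off and control the errors introduced by the cutoffs, ensuring they are $\grandO(\hbar^\infty)$ microlocally where it matters; (b) one must check that $\exp(\tfrac{i}{\hbar}\Op(G_k))$ is a well-defined FIO (or at least a bounded microlocally unitary operator) — this uses that $\Op(G_k)$ is essentially self-adjoint after cutoff and that the classical flow of $G_k$ stays in the relevant neighborhood for bounded time; (c) composition of pseudodifferential operators with these symbol classes (polynomial in $z$, times negative powers of $\hbar$) must be tracked carefully so that the gain at each step is genuinely one power of $\hbar^{1/2}$ in the combined $(z,\hbar^{1/2})$ grading — the interplay between the $\hbar^2\partial_x^2$ in $\Ih^{(j)}$ and the ordinary Weyl calculus is exactly the point where the $\hbar^{1/2}$ scaling enters. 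Once these technical points are handled (this is where I would cite the pseudodifferential calculus of \cite{Martinez}, \cite{Zworski} and adapt the scheme of \cite{article}), the result follows by the standard finite induction on the degree $r$.
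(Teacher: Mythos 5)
Your proposal follows essentially the same strategy as the paper: a Fourier integral operator quantizing the symplectomorphism $\Phi$ of Theorem~\ref{symplectomorphism} via the Egorov theorem (which gives $(iv)$ and $(v)$), followed by a semiclassical Birkhoff reduction in the grading where $\hbar^{1/2}$ has the weight of one power of $z$. The organizational difference is that you iterate at the operator level, conjugating by $\exp(\tfrac{i}{\hbar}\Op(G_k))$ one degree at a time, whereas the paper first proves a purely algebraic Birkhoff normal form in the formal-series algebra $\mathcal{C}^\infty(\R^d_w)[[x,\xi,\hbar]]$ (Theorem~\ref{algtheorem}) and only then quantizes in a single shot: it picks one compactly supported realization $c$ of the full formal generator $\tau$ and conjugates once by $e^{-\frac{i}{\hbar}\Op(c)}$ (Theorem~\ref{BNh}). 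The paper's two-step scheme largely evaporates the technical concerns (a)--(c) you raise about composing many exponentials, because there is just one bounded self-adjoint $\Op(c)$ to exponentiate, and the Taylor formula together with Egorov control the remainder of this single conjugation.

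Two corrections to the bookkeeping. First, if $\opQ_k^{\mathrm{bad}}$ has degree $k$ and $\tfrac{i}{\hbar}\ad_{H^0}$ lowers the total $(z,\hbar^{1/2})$-degree by $2$ (since $H^0$ has degree $2$; see~\eqref{OrdreEtCommutateurs}), then the generator $G_k$ solving the cohomological equation must itself have degree $k$, not $k+2$, exactly as the paper takes $\tau'\in\mathcal{D}_N$ to kill $R_N\in\mathcal{D}_N$. Second, the operator $\{\sum_j\hat\beta_j(w)|z_j|^2,\cdot\}$ acts diagonally on monomials $z^\alpha\bar z^\gamma$ only if one takes the Poisson bracket in the $z$ variables alone; the $w$-derivatives of $\hat\beta_j$ must be pushed into the error term, which the paper does by noting that $\tfrac{i}{\hbar}\ad_{\hat\beta_j}(\tau')$ has degree $\geq N+1$ when $\tau'\in\mathcal{D}_N$, since $\hat\beta_j$ is independent of $z$. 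Finally, $(ii)$ is obtained in the paper not by localizing in $w$ but by cutting $f^\star$ off to an arbitrarily small compact support in the action variables $(I_1,\dots,I_{d/2},\hbar)$ and invoking the G$\overset{\circ}{\text{a}}$rding inequality, which makes $\Op f^\star$ a relatively bounded perturbation of $\Lh^0$ with small constant, uniformly on $\mathcal{S}(\R^d)$ rather than only on a spectral window.
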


Using microlocalization properties of the eigenfunctions of $\Lh$ and $\BNh$, we prove that they have the same spectra in the following sense. We recall that $\tilde{b}_1$, defined in (\ref{defb1tilde}), is chosen such that
$$\lbrace b(q) \leq \tilde{b}_1 \rbrace \subset \Omega.$$

 \begin{theorem}\label{comparisonOftheEigenvalues}
Let $\varepsilon >0$ and $b_1 \in (0,\tilde{b}_1)$. We denote $$\lambda_1(\hbar) \leq \lambda_2(\hbar) \leq ... \quad \text{and} \quad \nu_1(\hbar) \leq \nu_2(\hbar) \leq ...$$ the first eigenvalues of $\Lh$ and $\BNh$ respectively. Then
$$\lambda_n(\hbar) = \nu_n(\hbar) + \grandO(\hbar^{r/2 - \varepsilon}),$$
uniformly in $n$ such that $\lambda_n(\hbar) \leq \hbar b_1$ and $\nu_n(\hbar) \leq \hbar b_1$.
\end{theorem}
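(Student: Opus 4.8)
The plan is to compare the two self-adjoint operators $\Lh$ and $\BNh$ via their min-max characterizations, using the relation $\Uh^* \Lh \Uh = \BNh + \Rh$ from Theorem \ref{ThmFormeNormale} together with the microlocalization of the low-lying eigenfunctions near the well. The key point is that although $\Uh$ is only \emph{microlocally} unitary (properties (iv)--(v)), the eigenfunctions with eigenvalue $\leq \hbar b_1$ are concentrated (up to $\grandO(\hbar^{\infty})$) in the region where these microlocal identities hold, so $\Uh$ essentially intertwines the two spectral problems on the relevant spectral window.

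First I would establish the microlocalization: using Assumption \ref{InequalityIntensity} and the fact that $\spectre_{ess}(\Lh) \subset [\hbar(\tilde b_1 - c\hbar^{1/4}), +\infty)$, any eigenfunction $u$ of $\Lh$ with $\Lh u = \lambda u$, $\lambda \leq \hbar b_1 < \hbar \tilde b_1$, satisfies $\|u\| = 1$ and is microlocalized in $\{H(q,p) \leq \hbar b_1(1+o(1))\} \subset \{b(q) \leq \tilde b_1\} = K \subset \subset \Omega$, hence in the set $U$ where $\Uh\Uh^* = I$ microlocally. The same must be done for $\BNh$: its eigenfunctions $\psi$ with eigenvalue $\leq \hbar b_1$ are, thanks to the lower bound (ii) comparing $\BNh$ with $\Lh^0 = \Op(\sum \hat\beta_j(w)|z_j|^2)$ and the structure of $\Lh^0$ as a $w$-dependent family of harmonic oscillators, microlocalized near $z = 0$ and near $w = 0$, i.e.\ in the domain where (iv) holds. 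I would make these statements quantitative: $\Uh^*\Uh - I$ and $\Uh\Uh^* - I$ act like $\grandO(\hbar^{\infty})$ on the spectral subspaces $\mathbf{1}_{(-\infty, \hbar b_1]}(\BNh)$ and $\mathbf{1}_{(-\infty, \hbar b_1]}(\Lh)$ respectively, and likewise $\Uh$ maps the former approximately isometrically into a neighborhood of the latter.

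Next, with these facts in hand, I would run the standard two-sided min-max argument. For the inequality $\lambda_n \leq \nu_n + \grandO(\hbar^{r/2-\varepsilon})$: take the span $E_n$ of the first $n$ eigenfunctions of $\BNh$; apply $\Uh$ to get an $n$-dimensional (up to $\grandO(\hbar^{\infty})$ corrections) subspace of $D(\Lh)$, and for $\psi \in E_n$ estimate $\langle \Lh \Uh\psi, \Uh\psi\rangle = \langle \Uh^*\Lh\Uh \psi, \psi\rangle + \grandO(\hbar^{\infty}) = \langle \BNh\psi, \psi\rangle + \langle \Rh\psi,\psi\rangle + \grandO(\hbar^{\infty})$. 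Here I use property (iii): since $\psi$ is microlocalized where $|z| \lesssim \hbar^{1/2}$ and near $w = 0$, the remainder symbol $\weylsymbol(\Rh) \in \grandO((|z|+\hbar^{1/2})^r)$ gives $|\langle \Rh\psi, \psi\rangle| \leq C\hbar^{r/2-\varepsilon}\|\psi\|^2$ (the $\hbar^{-\varepsilon}$ loss coming from turning microlocal cutoffs into genuine operator bounds, e.g.\ via a Gårding-type or functional-calculus argument on the oscillators $\Ih^{(j)}$). Then min-max for $\Lh$ yields $\lambda_n \leq \max_{\psi \in \Uh E_n} \frac{\langle \Lh \psi,\psi\rangle}{\|\psi\|^2} \leq \nu_n + C\hbar^{r/2-\varepsilon}$. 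The reverse inequality is symmetric: take the first $n$ eigenfunctions of $\Lh$, which live microlocally in $U$, push them back by $\Uh^*$ (approximately isometric there by (v)), and estimate $\langle \BNh \Uh^* u, \Uh^* u\rangle \leq \langle \Lh u, u\rangle + C\hbar^{r/2-\varepsilon}\|u\|^2$ using $\BNh = \Uh^*\Lh\Uh - \Rh$ microlocally and again (iii); min-max for $\BNh$ then gives $\nu_n \leq \lambda_n + C\hbar^{r/2-\varepsilon}$.

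The main obstacle, and the place requiring genuine care rather than bookkeeping, is controlling the remainder term $\langle \Rh\psi,\psi\rangle$ uniformly in $n$ on the spectral window $\{\,\cdot \leq \hbar b_1\}$: one must convert the purely microlocal/symbolic statement (iii) into a norm estimate $\grandO(\hbar^{r/2-\varepsilon})$ on the relevant subspaces, and this is exactly where the microlocalization near $\{|z|\lesssim\hbar^{1/2},\ w\approx 0\}$ must be both established and made quantitative — the factor $\hbar^{-\varepsilon}$ absorbs the mismatch between "microlocally supported" and "supported", and one should check that the number of eigenvalues below $\hbar b_1$ stays polynomially bounded in $\hbar^{-1}$ so that the error is uniform in $n$. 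A secondary technical point is handling the non-unitarity of $\Uh$ cleanly: one should either work with the genuine near-isometry of $\Uh$ on a slightly larger spectral window or correct $\Uh$ to an honest partial isometry modulo $\grandO(\hbar^{\infty})$ before invoking min-max, to avoid spurious dimension defects in the test subspaces.
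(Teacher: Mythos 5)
Your proposal takes essentially the same approach as the paper: push test subspaces between $\Lh$ and $\BNh$ through the Fourier integral operator $\Uht = V_\hbar \Uh$ in a two-sided min-max comparison, bound $\langle \Rh \cdot, \cdot\rangle$ via the symbol estimate (iii) after microlocalizing low-lying eigenfunctions near the characteristic manifold (Theorems \ref{microloc} and \ref{microloc2}), and invoke the polynomial rank bounds (Lemmas \ref{rank}, \ref{rankBNh}) for uniformity in $n$. The paper makes the ``$\hbar^{-\varepsilon}$ loss'' you anticipate concrete by inserting the cutoffs $\chi_1(\hbar^{-2\delta}\Lh)\chi_0(q)$ (resp.\ $\chi_1(\hbar^{-2\delta}\Ih^{(j)})\Op(\chi_0)$) at scale $\hbar^{2\delta}$ with $\delta \in (0,1/2)$, so that the remainder contributes $\grandO(\hbar^{\delta r})$, which is $\grandO(\hbar^{r/2-\varepsilon})$ once $\delta$ is chosen close enough to $1/2$.
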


We also reduce $\BNh$ according to harmonic oscillators.

\begin{theorem}\label{ThmReductionDeBNh}
For $k \geq 0$, let us denote $h_{k}$ the Hermite function, satisfying
$$\Ih^{(j)} h_{k}(x_j) = \hbar (2k +1) h_k(x_j).$$
For $n=(n_1,...,n_{d/2}) \in \N^{d/2}$, there exists a pseudodifferential operator $\BNh^{(n)}$ acting on $\Ld (\R_y^{d/2})$ such that:
$$\BNh(u \otimes h_{n_1} \otimes ... \otimes h_{n_{d/2}} ) = \BNh^{(n)}(u) \otimes h_{n_1} \otimes ... \otimes h_{n_{d/2}}, \quad u \in \mathcal{S}(\R^{d/2}_y).$$
Its symbol is:
$$F^{(n)}(w) = \hbar \sum_{j=1}^{d/2} \bbeta_j(w)(2n_j+1) + f^{\star}(w,\hbar(2n+1),\hbar),$$
and we have:
$$\spectre (\BNh) = \bigcup_n \spectre (\BNh^{(n)}).$$
Moreover, the multiplicity of $\lambda$ as eigenvalue of $\BNh$ is the sum over $n$ of the multiplicities of $\lambda$ as eigenvalue of $\BNh^{(n)}$.
\end{theorem}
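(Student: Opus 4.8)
The plan is to exploit the defining feature of the normal form, namely that $\BNh$ commutes with each of the harmonic oscillators $\Ih^{(1)},\dots,\Ih^{(d/2)}$, and then to diagonalize in the Hermite basis of the $x$-variables. Concretely, write $\Ld(\R^d_{(x,y)})\simeq\Ld(\R^{d/2}_y)\otimes\bigotimes_{j=1}^{d/2}\Ld(\R_{x_j})$. The symbol $\sum_j\bbeta_j(w)\vert z_j\vert^2$ of $\Lh^0$ is a sum of products of a function of $w$ and a function of the single pair $z_j$, involving disjoint groups of variables, so its Weyl quantization factorizes: $\Lh^0=\sum_{j=1}^{d/2}B_j\,\Ih^{(j)}$, where $B_j$ denotes the $\hbar$-Weyl quantization of $\bbeta_j$ acting on $\Ld(\R^{d/2}_y)$ (and trivially on the other factors). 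Each $B_j$ touches only $y$ and each $\Ih^{(j)}$ only $x_j$, so every operator in the family $\{B_j,\Ih^{(k)}\}$ commutes with every other; in particular $[\Lh^0,\Ih^{(k)}]=0$. Likewise, by the construction carried out in the proof of Theorem~\ref{ThmFormeNormale}, $\Op f^{\star}(w,\Ih^{(1)},\dots,\Ih^{(d/2)},\hbar)$ is a (locally finite, resp. convergent) sum of terms $C_\gamma\,(\Ih^{(1)})^{\gamma_1}\cdots(\Ih^{(d/2)})^{\gamma_{d/2}}$ with $C_\gamma$ an $\hbar$-pseudodifferential operator in the $w$-variables, so it too commutes with every $\Ih^{(k)}$. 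Hence $[\BNh,\Ih^{(k)}]=0$ for all $k$, at first on the common core $\mathcal{S}(\R^d)$ and then, after taking closures, as self-adjoint operators.

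Next I would use the Hermite decomposition. Since $(h_k)_{k\ge 0}$ is an orthonormal basis of $\Ld(\R)$ made of eigenfunctions of $\Ih^{(j)}$, the family $(h_n:=h_{n_1}\otimes\cdots\otimes h_{n_{d/2}})_{n\in\N^{d/2}}$ is an orthonormal basis of $\Ld(\R^{d/2}_x)$, and
\[
\Ld(\R^d_{(x,y)})=\bigoplus_{n\in\N^{d/2}}\Ld(\R^{d/2}_y)\otimes\C\,h_n,
\]
the $n$-th summand being the joint eigenspace of $(\Ih^{(1)},\dots,\Ih^{(d/2)})$ for the eigenvalue $(\hbar(2n_1+1),\dots,\hbar(2n_{d/2}+1))$. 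Because $\BNh$ commutes with all the $\Ih^{(k)}$, it leaves each summand invariant and acts there as $\BNh^{(n)}\otimes I$ for a unique operator $\BNh^{(n)}$ on $\Ld(\R^{d/2}_y)$. Applying $\BNh$ to $u\otimes h_n$ and using $\Ih^{(j)}h_{n_j}=\hbar(2n_j+1)h_{n_j}$ together with the fact that the coefficients $B_j$, $C_\gamma$ act only on $u$, one reads off that $\BNh^{(n)}$ is obtained by the substitution $\Ih^{(j)}\leadsto\hbar(2n_j+1)$: this turns $\Lh^0$ into the $\hbar$-Weyl quantization of $\hbar\sum_j\bbeta_j(w)(2n_j+1)$, and $\Op f^{\star}(w,\Ih^{(1)},\dots,\Ih^{(d/2)},\hbar)$ into the quantization of $f^{\star}(w,\hbar(2n+1),\hbar)$ (termwise on the defining series). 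Thus $\BNh^{(n)}$ is the $\hbar$-pseudodifferential operator on $\Ld(\R^{d/2}_y)$ with Weyl symbol $F^{(n)}(w)=\hbar\sum_{j=1}^{d/2}\bbeta_j(w)(2n_j+1)+f^{\star}(w,\hbar(2n+1),\hbar)$, as claimed.

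Finally, the spectral statement follows from the orthogonal direct sum decomposition $\BNh=\bigoplus_{n\in\N^{d/2}}\BNh^{(n)}$: a vector $u=\sum_n u_n\otimes h_n$ satisfies $\BNh u=\lambda u$ if and only if $(\BNh^{(n)}-\lambda)u_n=0$ for every $n$, so $\ker(\BNh-\lambda)=\bigoplus_n\ker(\BNh^{(n)}-\lambda)\otimes\C\,h_n$; this gives both $\spectre(\BNh)=\bigcup_n\spectre(\BNh^{(n)})$ and the additivity of multiplicities. (In the range relevant for the eigenvalue asymptotics — below $\hbar b_1$ — only finitely many $n$ contribute, since $F^{(n)}\to+\infty$ locally uniformly as $\vert n\vert\to\infty$, each $\bbeta_j$ being bounded below by a positive constant; so the union is locally finite there and no closure is needed.)

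I expect the only real work to be the functional-analytic bookkeeping hidden behind ``after taking closures'': one must check that $\mathcal{S}(\R^d)$ is a core for the self-adjoint realization of $\BNh$, and that the commutation $[\BNh,\Ih^{(k)}]=0$ on this core genuinely forces $\BNh$ to reduce along the joint spectral subspaces of the commuting family $(\Ih^{(k)})_k$, so that each $\BNh^{(n)}$ is (essentially) self-adjoint on its natural domain. This is routine once one knows — from item $(ii)$ of Theorem~\ref{ThmFormeNormale} — that $\Op f^{\star}(w,\Ih^{(1)},\dots,\Ih^{(d/2)},\hbar)$ is $\Lh^0$-bounded with relative bound $<1$; the algebraic core of the theorem, i.e. the explicit symbol $F^{(n)}$ and the spectral splitting, is then immediate.
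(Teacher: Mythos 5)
Your proof is correct and takes essentially the same route as the paper: there this result appears as Lemma \ref{reductionBNHdiagonale} and is dispatched with the single remark that it ``follows directly from'' the Hermite eigenvalue relation $\Ih^{(j)}h_{n_j}=\hbar(2n_j+1)h_{n_j}$ and the explicit formula $\BNh=\Lh^0+\Op f^{\star}(w,\Ih^{(1)},\dots,\Ih^{(d/2)},\hbar)$. You simply make explicit what the paper leaves implicit (the tensor factorization of $\Lh^0$, the invariance of the Hermite-indexed subspaces, the core/closure bookkeeping, and the observation that no closure of the union of spectra is needed in the relevant low-energy range).
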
 
 
Finally, we deduce an expansion of the $N>0$ first eigenvalues of $\Lh$ in powers of $\hbar^{1/2}$.
 
\begin{theorem}[Expansion of the first eigenvalues]\label{expansionOftheEigenvalues}
Let $\varepsilon >0$ and $N \geq 1$. There exist $\hbar_0 >0$ and $c_0 >0$ such that, for $\hbar \in (0,\hbar_0]$, the $N$ first eigenvalues of $\Lh$ : $(\lambda_j(\hbar))_{1 \leq j \leq N}$ admit an expansion in powers of $\hbar^{1/2}$ of the form:
$$\lambda_j(\hbar) = \hbar b_0 + \hbar^2(E_j+c_0) + \hbar^{5/2} c_{j,5} + ... + \hbar^{ (r-1)/2 } c_{j,r-1} + \grandO(\hbar^{r/2 - \varepsilon}),$$
where $\hbar E_j$ is the $j$-th eigenvalue of the $d/2$-dimensional harmonic oscillator $$\Op( \Hess_0 (b \circ \varphi^{-1}) ).$$
\end{theorem}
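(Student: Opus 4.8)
The plan is to deduce the expansion from Theorems~\ref{ThmFormeNormale}--\ref{ThmReductionDeBNh} together with an elementary harmonic approximation of the scalar operator $\BNh^{(0)}$. Granting the bound, proved in the last step, that the $N$ first eigenvalues of $\BNh^{(0)}$ equal $\hbar b_0+\grandO(\hbar^2)$, there are two soft reductions. \emph{Reduction to $n=0$:} by Theorem~\ref{ThmReductionDeBNh}, $\spectre(\BNh)=\bigcup_n\spectre(\BNh^{(n)})$, with $\BNh^{(n)}$ of symbol $F^{(n)}(w)=\hbar\sum_j\bbeta_j(w)(2n_j+1)+f^\star(w,\hbar(2n+1),\hbar)$; since the (globally extended) $\bbeta_j$ are bounded below by a fixed $\delta>0$ while $f^\star(w,\hbar(2n+1),\hbar)=\grandO(\hbar^2)$ for $|n|$ in any bounded range, a G\aa{}rding-type lower bound gives $\inf\spectre(\BNh^{(n)})\ge\hbar(b_0+2\delta)-C\hbar^2$ for every $n\ne0$ (for $|n|$ large one has already $F^{(n)}\ge\hbar b_1$). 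Comparing with $\hbar b_0+\grandO(\hbar^2)$, for $\hbar$ small the $N$ first eigenvalues of $\BNh$ coincide, with multiplicities, with those of $\BNh^{(0)}$; in particular $\nu_j(\hbar)=\hbar b_0+\grandO(\hbar^2)$ for $j\le N$. \emph{Reduction to the normal form:} this puts $\nu_j(\hbar)$ below $\hbar b_1$ for $\hbar$ small, and a crude min--max upper bound from the quasimodes $\Uh(h_0\otimes\cdots\otimes h_0\otimes\psi)$ puts $\lambda_j(\hbar)$ below $\hbar b_1$ too, so Theorem~\ref{comparisonOftheEigenvalues} yields $\lambda_j(\hbar)=\nu_j(\hbar)+\grandO(\hbar^{r/2-\varepsilon})$ for $1\le j\le N$. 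Thus it suffices to expand the $N$ first eigenvalues of $\BNh^{(0)}$.

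Near $w=0$ the symbol of $\BNh^{(0)}$ is $F^{(0)}(w)=\hbar\,\hat b(w)+f^\star(w,\mathbf{1}\hbar,\hbar)$, where $\hat b:=b\circ\varphi^{-1}$ has a non-degenerate minimum $b_0$ at $w=0$ and, $f^\star$ being a function of the actions of degree $\ge4$ in $(z,\hbar^{1/2})$ (no term of degree $2$ or $3$: $\sigma_1(\Lh)=0$, and action monomials have even degree), $f^\star(w,\mathbf{1}\hbar,\hbar)=\hbar^2 g(w,\hbar)$ with $g$ smooth. Standard Agmon-type estimates — or the microlocalization already built into Theorem~\ref{ThmFormeNormale} — localize the low-lying eigenfunctions of $\BNh^{(0)}$ near $w=0$ at scale $\hbar^{1/2}$, so that only the germ of $F^{(0)}$ at $0$ enters. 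The rescaling $w\mapsto\hbar^{1/2}w$ — a unitary change of variables on $\Ld(\R^{d/2}_y)$ replacing the semiclassical quantization by the $\hbar=1$ one on the dilated symbol — followed by Taylor expansion of $F^{(0)}$ at $0$ turns $\hbar^{-1}\BNh^{(0)}$ into
\begin{align*}
b_0+\hbar\big(\opQ_0+c_0\big)+\hbar^{3/2}\opQ_1+\hbar^2\opQ_2+\cdots,
\end{align*}
where $c_0=g(0,0)$ is a constant, $\opQ_0$ is the $\hbar=1$ quantization of the quadratic part of $\hat b$ at $0$ — a positive harmonic oscillator on $\R^{d/2}$ whose $j$-th eigenvalue is $E_j$ by the definition in the statement — and each $\opQ_k$ is the quantization of a polynomial in $w$ of parity $(-1)^k$, relatively bounded with respect to $\opQ_0$. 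There is no $\hbar^{1/2}$-term, since $\hat b$ is critical at $0$ and $f^\star$ is action-form.

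Since the $N$ first eigenvalues of $\opQ_0$ are isolated with gaps bounded below, Rayleigh--Schr\"odinger perturbation theory in the small parameter $\hbar^{1/2}$ applies for $\hbar$ small, and the $j$-th eigenvalue of $\opQ_0+c_0+\hbar^{1/2}\opQ_1+\hbar\opQ_2+\cdots$ expands asymptotically as $E_j+c_0+\hbar^{1/2}\mu_{j,1}+\hbar\mu_{j,2}+\cdots$, with $\mu_{j,1}=\langle\phi_j,\opQ_1\phi_j\rangle=0$ when $E_j$ is simple (a parity eigenstate is orthogonal to the odd operator $\opQ_1$), which is why no $\hbar^{3/2}$-term survives in the statement. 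Multiplying back by $\hbar$ confirms $\nu_j(\hbar)=\hbar b_0+\grandO(\hbar^2)$ (the bound used above) and gives
\begin{align*}
\nu_j(\hbar)=\hbar b_0+\hbar^2(E_j+c_0)+\hbar^{5/2}c_{j,5}+\cdots+\hbar^{(r-1)/2}c_{j,r-1}+\grandO(\hbar^{r/2}),
\end{align*}
whose displayed coefficients involve only the part of $F^{(0)}$ of degree $<r$ that Theorem~\ref{ThmFormeNormale} pins down; the series runs to all orders but is cut at $\hbar^{(r-1)/2}$, since the comparison error $\hbar^{r/2-\varepsilon}$ absorbs everything beyond. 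Together with the first reduction this is exactly the claimed expansion, uniformly in $1\le j\le N$ and $\hbar\in(0,\hbar_0]$.

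The genuinely analytic content is in the last two steps: the a priori localization of the low-lying eigenfunctions of $\BNh^{(0)}$ at scale $\hbar^{1/2}$ (so that the local normal form suffices), and the relative $\opQ_0$-boundedness of the successive quantized polynomials (so that the perturbation series is a true asymptotic expansion). I expect the localization estimate, together with the verification that the various remainders — from $\Rh$, from truncating $f^\star$, and from the perturbation series — combine to \emph{exactly} $\grandO(\hbar^{r/2-\varepsilon})$, to be the main point requiring care; the remainder of the argument is routine.
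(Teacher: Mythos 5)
Your high-level route is exactly the paper's: reduce to $\BNh^{(0)}$ via a G\aa{}rding-type bound on $\BNh^{(n)}$, $n\ne 0$ (the paper's Lemma establishing \eqref{decompositionSpectreBNh}--\eqref{inclusionSpectreBNhn}); expand the low-lying eigenvalues of $\BNh^{(0)}$, which is a scalar semiclassical pseudodifferential operator with symbol $\hbar\hat b + f^\star(\cdot,\hbar\mathbf{1},\hbar)$ having a nondegenerate well at $w=0$; and transfer to $\Lh$ via Theorem~\ref{comparisonOftheEigenvalues}. The one genuine difference is in the middle step: the paper treats it as a citation — the expansion \eqref{ExpansionForBh} for a well operator is taken from Helffer--Sj\"ostrand \cite{HS1} (also \cite{Sjo92,San}), with only a brief heuristic sketch (diagonalize $\Hess_0\hat b$, observe $\hbar^{1/2}$-scale behaviour) — whereas you unpack it with a $w\mapsto\hbar^{1/2}w$ rescaling and Rayleigh--Schr\"odinger perturbation theory. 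That is a legitimate alternative presentation, provided the localization and relative-boundedness issues you flag at the end are actually carried out; they are precisely the content of the Helffer--Sj\"ostrand analysis being cited.

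One step in your argument is, however, an overreach and should be dropped. You claim $\mu_{j,1}=\langle\phi_j,\opQ_1\phi_j\rangle=0$ by a parity argument, implying the $\hbar^{5/2}$-coefficient $c_{j,5}$ vanishes. First, neither the statement nor the paper's Theorem~\ref{wellexpansionBNh} asserts $c_{j,5}=0$; the cited expansion \eqref{ExpansionForBh} allows a nonzero $\hbar^{3/2}$ term, and the paper keeps $\hbar^{5/2}c_{j,5}$ in the formula because it need not cancel. Second, your parity argument is not generally valid: $E_j$ need not be simple (it is only simple if the Hessian frequencies $\nu_1,\dots,\nu_{d/2}$ are nonresonant), and even in the degenerate case the argument requires every mode $h_\alpha$ in the $E_j$-eigenspace to share the same parity $(-1)^{|\alpha|}$, which fails when the frequencies satisfy an odd-order resonance $\sum\gamma_j\nu_j=0$ with $|\gamma|$ odd. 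Since the stated result only requires the existence of some coefficients $c_{j,k}$, this does not invalidate your proof of the theorem, but the vanishing claim should be removed or replaced with the correct (weaker) observation that whatever $\mu_{j,1}$ is, it is absorbed into $c_{j,5}$.

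Finally, two small points. Your ``reduction to the normal form'' needs the a priori bound $\lambda_j(\hbar)\le\hbar b_1$ for $1\le j\le N$; the quasimode hint you give is the right idea, but note that the paper obtains it implicitly through the same min--max mechanism used in the proof of Theorem~\ref{comparisonOftheEigenvalues} (testing on the span of microlocalized eigenfunctions), not a separate elementary bound — either route works. And your remark that the uniform estimates require bounding the number of eigenvalues below $b_1\hbar$ is indeed the role of Lemmas~\ref{rank} and~\ref{rankBNh} in the paper; this should be stated rather than left implicit.
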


Note that, from Theorems \ref{comparisonOftheEigenvalues} and \ref{ThmReductionDeBNh}, we deduce Weyl estimates for $\Lh$. Some similar formulas appear in \cite{Ivrii}. Here $N(\Lh, b_1 \hbar)$ denotes the number of eigenvalues 
$\lambda$ of $\Lh$ such that $\lambda \leq b_1 \hbar$, counted with multiplicities.

\begin{corollary}[Weyl estimates]\label{WeylEstimates}
For any $b_1 \in (b_0, \tilde{b}_1)$,
\begin{align*}
N(\Lh, b_1 \hbar) \sim \frac{1}{(2\pi \hbar)^{d/2}} \sum_{n \in \N^{d/2}} \int_{b^{[n]}(q) \leq b_1} \frac{B^{d/2}}{(d/2)!}.
\end{align*}
where $$b^{[n]}(q) = \sum_{j=1}^{d/2} (2n_j+1) \beta_j(q).$$
The sum is finite because the $\beta_j$ are bounded from below by a positive constant on $\Omega$. In particular, if $M= \R^d$, we get
$$N(\Lh,b_1 \hbar) \sim \frac{1}{(2\pi \hbar)^{d/2}} \sum_{n \in \N^{d/2}} \int_{b^{[n]}(q) \leq b_1} \beta_1(q) ... \beta_{d/2}(q) \dd q.$$
\end{corollary}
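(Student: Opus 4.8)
The plan is to derive the Weyl asymptotics from the structural results already at our disposal, namely Theorems \ref{comparisonOftheEigenvalues} and \ref{ThmReductionDeBNh}, reducing everything to a standard semiclassical counting estimate for scalar $\hbar$-pseudodifferential operators. First I would observe that, since $r \geq 3$, Theorem \ref{comparisonOftheEigenvalues} gives $\lambda_n(\hbar) = \nu_n(\hbar) + \grandO(\hbar^{r/2-\varepsilon}) = \nu_n(\hbar) + o(\hbar)$ uniformly for eigenvalues below $b_1 \hbar$. Hence, for $b_1 < b_1' < \tilde b_1$ and $\hbar$ small, one has the two-sided inclusion of counting functions
$$
N(\BNh, (b_1 - C\hbar^{r/2-1-\varepsilon})\hbar) \leq N(\Lh, b_1 \hbar) \leq N(\BNh, (b_1 + C\hbar^{r/2-1-\varepsilon})\hbar),
$$
so it suffices to prove the asymptotics for $N(\BNh, b_1\hbar)$ with $b_1$ replaced by a nearby value, and then check the error is absorbed in the $o$ of the leading term.

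Next I would use Theorem \ref{ThmReductionDeBNh} to decompose $N(\BNh, b_1 \hbar) = \sum_{n \in \N^{d/2}} N(\BNh^{(n)}, b_1 \hbar)$, the sum being finite: if $\min_j \inf_\Omega \beta_j =: \beta_{\min} > 0$ then $F^{(n)}(w) \geq \hbar(2|n|_1 + d/2)\beta_{\min}$ plus the lower-order correction $f^\star = \grandO(\hbar^2|n|^2 + \hbar^3)$, so $\BNh^{(n)}$ has no eigenvalue $\leq b_1 \hbar$ once $|n|_1 > b_1/(2\beta_{\min})$ (for $\hbar$ small). Moreover the microlocalization/confinement properties underlying Theorem \ref{comparisonOftheEigenvalues} (the eigenfunctions below $b_1\hbar$ live near $q_0$, equivalently near $w = 0$) let me replace $\BNh^{(n)}$ by an operator whose symbol $F^{(n)}$ is a genuine element of a good symbol class on $\R^{d/2}_y$, elliptic outside a compact set, with principal symbol $\hbar b^{[n]}(\varphi^{-1}(w))$ where I identify $b^{[n]}$ in the $w$-variables via the symplectomorphism $\varphi$ of Theorem \ref{symplectomorphism}. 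Then the classical semiclassical Weyl law for a scalar $\hbar$-$\Psi$DO (see e.g. \cite{Zworski} or \cite{Martinez}) gives
$$
N(\BNh^{(n)}, b_1\hbar) = \frac{1}{(2\pi\hbar)^{d/2}}\,\mathrm{vol}\{ w : \hbar b^{[n]}(\varphi^{-1}(w)) \leq b_1 \hbar \}\,(1+o(1)) = \frac{\mathrm{vol}\{ b^{[n]}(\varphi^{-1}(w)) \leq b_1\}}{(2\pi\hbar)^{d/2}}(1+o(1)),
$$
and summing over the finitely many relevant $n$ and pushing the volume back to $M$ by $\varphi$ — under which $\dd\eta\wedge\dd y$ pulls back to $B$, so the Liouville volume $\tfrac{1}{(d/2)!}(\dd\eta\wedge\dd y)^{d/2}$ becomes $\tfrac{B^{d/2}}{(d/2)!}$ on $\{b^{[n]} \leq b_1\} \subset \Omega$ — yields the stated formula. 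The Euclidean special case follows since $B^{d/2}/(d/2)! = \beta_1(q)\cdots\beta_{d/2}(q)\,\dd q$ there.

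The main obstacle, and the point requiring genuine care rather than citation, is controlling the counting function \emph{uniformly in $n$} near the threshold: I must show that the individual asymptotics for $N(\BNh^{(n)}, b_1\hbar)$ have error terms summable over the (growing number of, as $\hbar \to 0$) relevant indices $n$, and that the boundary $\{b^{[n]}(\varphi^{-1}(w)) = b_1\}$ contributes negligibly — i.e. the set $\{ |b^{[n]}(\varphi^{-1}(w)) - b_1| \leq \delta \}$ has volume $\to 0$ as $\delta \to 0$, uniformly in $n$. This is where Assumption \ref{dismagwell} (the non-degenerate minimum of $b$, hence a clean sublevel-set geometry) and the lower bound $\beta_j \geq \beta_{\min}$ on $\Omega$ enter decisively: they force the number of contributing $n$ to be $\grandO(1)$ independent of $\hbar$ in fact — because $b^{[n]}(q) \geq (2|n|_1 + d/2)\beta_{\min}$ and $b_1 < \tilde b_1$ is fixed — so the sum is genuinely finite with a bound independent of $\hbar$, and no uniformity-in-$n$ subtlety survives: one simply adds finitely many scalar Weyl laws. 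I would also double-check that the small shift $b_1 \mapsto b_1 \pm C\hbar^{r/2-1-\varepsilon}$ does not cross any value of $b_1$ for which some $\{b^{[n]}\circ\varphi^{-1} = b_1\}$ fails the measure-zero-boundary condition; since there are only finitely many functions $b^{[n]}$ in play and each is non-degenerate near its minimum, the set of bad levels is discrete, and for generic (hence for our fixed, after a harmless perturbation argument, or by a monotone-limit argument in $b_1$) $b_1 \in (b_0, \tilde b_1)$ this holds, which is exactly the hypothesis of the corollary.
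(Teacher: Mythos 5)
Your proposal is correct and follows essentially the same route as the paper: decompose $N(\BNh, b_1\hbar)$ via Theorem~\ref{ThmReductionDeBNh} into finitely many scalar semiclassical Weyl laws for $\BNh^{(n)}$ (the sum being finite because $\beta_j \geq \beta_{\min}>0$ on $\Omega$), push the phase-space volumes back to $M$ through $\varphi$ so that $\dd y \wedge \dd\eta$ pulls back to $B$ and the Liouville volume becomes $B^{d/2}/(d/2)!$, and then transfer the count from $\BNh$ to $\Lh$ by applying Theorem~\ref{comparisonOftheEigenvalues} at a slightly enlarged threshold. The one point you flag that the paper leaves implicit is whether $\{b^{[n]}\circ\varphi^{-1}=b_1\}$ might have positive measure, in which case the classical Weyl asymptotics would only give one-sided bounds; your suggested fix via monotonicity of $N(\BNh,\cdot)$ in $b_1$ (or a limiting argument over regular thresholds) closes that gap, and is a small but genuine improvement over the paper's silent use of the clean asymptotic for each $\BNh^{(n)}$.
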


\subsection{Organization and strategy}

In section 2, we construct a symplectomorphism which simplify $H$ near its zero set $\Sigma = H^{-1}(0)$ (Theorem \ref{symplectomorphism}). In the new coordinates, $H$ becomes:
$$\hat{H}(q,z) = \sum_{j=1}^{d/2} \beta_j(q) \vert z_j \vert^2 + \grandO(\vert z \vert^3).$$ In section 3, we construct a formal Birkhoff normal form: in the space of formal series in variables $(x,\xi, \hbar)$, we change $\hat{H}$ into $H^0 + \kappa + \rho$, with $H^0 = \sum_{j=1}^{d/2} \beta_j \vert z_j \vert^2$, $\kappa$ a series in $\vert z_j \vert^2$ ($1 \leq j \leq d/2$), and $\rho$ a remainder of order $r$ (Theorem \ref{algtheorem}). In section 4, we quantify the changes of coordinates constructed in section 2 and 3, and we get the semiclassical Birkhoff normal form (Theorem \ref{ThmFormeNormale}). In section 5, we reduce $\BNh$ (Theorem \ref{ThmReductionDeBNh}) and we deduce an expansion of its first eigenvalues. It remains prove that the spectra of $\Lh$ and $\BNh$ below $b_1 \hbar$ coincide. Before doing it, we need microlocalization results proved in section 6. We prove that the eigenfunctions of $\Lh$ and $\BNh$ are microlocalized near the zero set of $H$, where our formal construction is valid. In section 7, we use the results of section 6, to prove that $\Lh$ and $\BNh$ have the same spectrum below $b_1\hbar$ (Theorem \ref{comparisonOftheEigenvalues}). This Theorem, together with the results of section 5, finishes the proof of Theorem \ref{expansionOftheEigenvalues}. We also prove the Weyl estimates (Corollary 1.1) here. Finally, in section 8 we discuss what we can get in the case $r_0=\infty$.

\section{\textbf{Reduction of the classical Hamiltonian}}

\subsection{A symplectic reduction of $T^*M$}

The zero set of $H$:
$$\Sigma = \lbrace (q,A(q)) \in T^*M : q \in \Omega \rbrace,$$
is a $d$-dimensional smooth submanifold of the cotangent bundle $T^*M$. We denote $j :\Omega \rightarrow T^*M$ the embedding
$$j(q) = (q,A(q)).$$
The symplectic structure on $T^*M$ is defined by the form $$\omega = \dd p \wedge \dd q = \dd \alpha, \quad \alpha = p \dd q.$$ In other words, for $p \in T_qM^*$ and $ \mathcal{V} \in T_{(q,p)}(T^*M),$
\begin{align}\label{defLiouville}
\alpha_{(q,p)} (\mathcal{V}) = p(\pi_* \mathcal{V}),
\end{align}
Where the map $\pi_* : T_{(q,p)}(T^*M) \rightarrow T_qM$ is the differential of the canonical projection $$\pi : T^*M \rightarrow M, \quad \pi(q,p) =q.$$
Using local coordinates with the notations of section \ref{sectioncoordinates}, at any point $(q,p) \in T^*M$ with 
$$p = p_1 \dd q_1 + ... + p_d \dd q_d,$$
the tangent vectors  $\mathcal{V} \in T_{(q,p)}( T^*M)$ are identified with $(Q,P) \in T_qM \times T_qM^*$, with 
$$Q= Q_1 \partial q_1 + ... + Q_d \partial q_d, \quad P = P_1 \dd q_1 + ... + P_d \dd q_d.$$ With this notation,
$$\pi_* (Q,P) = Q,$$
$$\alpha_{(q,p)}(Q,P) = p(Q),$$
$$\omega_{(q,p)} ((Q,P),(Q',P')) = \langle P', Q\rangle - \langle P, Q' \rangle,$$
where $\langle ., . \rangle$ denotes the duality bracket between $T_qM$ and $T_qM^*$.

\begin{lemma}\label{j*omega=B}
$\Sigma$ is a symplectic submanifold of $(T^*M, \omega)$, and 
$$j^* \omega = B.$$
In particular, at each point $j(q) \in \Sigma$, 
\begin{align}\label{decompspace}
T_{j(q)}(T^* M) = T_{j(q)} \Sigma \oplus T_{j(q)} \Sigma^{\perp},
\end{align}
where $\perp$ denotes the symplectic orthogonal for $\omega$.
\end{lemma}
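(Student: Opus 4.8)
The plan is to compute $j^*\omega$ directly in local coordinates and recognize it as $B$, then invoke Assumption~\ref{nondegeneracy} to conclude that $\Sigma$ is symplectic. First I would note that since $\omega = \dd\alpha$ and pullback commutes with $\dd$, we have $j^*\omega = \dd(j^*\alpha)$. By the defining formula \eqref{defLiouville} for the Liouville form, for $q\in\Omega$ and $Q\in T_qM$,
\begin{align*}
(j^*\alpha)_q(Q) = \alpha_{j(q)}(j_*Q) = A(q)\big(\pi_*(j_*Q)\big) = A(q)\big((\pi\circ j)_* Q\big) = A(q)(Q),
\end{align*}
because $\pi\circ j = \mathrm{id}_\Omega$. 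Hence $j^*\alpha = A$ as a $1$-form on $\Omega$, and therefore $j^*\omega = \dd(j^*\alpha) = \dd A = B$.

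Next I would verify the symplectic-submanifold claim. By Assumption~\ref{nondegeneracy} (together with the remark immediately following it), $\B(q)$ is invertible for $q$ in a neighborhood of $q_0$, and relation \eqref{relationBBmatrice} between the matrices $(B_{ij})$ and $(\B_{ij})$ shows that the matrix $(B_{ij}(q))$ is invertible there as well, since $(g_{ij})$ is invertible; equivalently, the $2$-form $B$ is nondegenerate on $\Omega$ (shrinking $\Omega$ if necessary, which is harmless as $\Omega$ may be taken as small as we like). Thus $j^*\omega = B$ is a nondegenerate closed $2$-form on $\Sigma$, i.e.\ $\Sigma$ is a symplectic submanifold of $(T^*M,\omega)$. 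The direct-sum decomposition \eqref{decompspace} is then the standard linear-algebra fact that a subspace on which a nondegenerate skew form restricts nondegenerately is complementary to its symplectic orthogonal: at each $j(q)\in\Sigma$, $T_{j(q)}\Sigma \cap T_{j(q)}\Sigma^\perp = \{0\}$ because $\omega$ restricted to $T_{j(q)}\Sigma$ is nondegenerate, and a dimension count ($\dim T_{j(q)}\Sigma^\perp = 2d - \dim T_{j(q)}\Sigma = d$) gives $T_{j(q)}(T^*M) = T_{j(q)}\Sigma \oplus T_{j(q)}\Sigma^\perp$.

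The only point requiring a little care — not a genuine obstacle — is the identification $(j^*\alpha)_q = A(q)$: one must keep straight that $j_*Q \in T_{j(q)}(T^*M)$ decomposes (in the coordinate identification $T_{j(q)}(T^*M)\simeq T_qM\times T_qM^*$ recalled before the lemma) as $(Q, (\nabla A(q))Q)$, so that $\alpha_{j(q)}(j_*Q) = p(Q)$ with $p = A(q)$, giving exactly $A(q)(Q)$; the $T_qM^*$-component of $j_*Q$ is irrelevant because $\alpha$ only sees $\pi_*$. Everything else is the chain rule and elementary symplectic linear algebra.
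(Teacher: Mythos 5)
Your proof is correct and follows essentially the same route as the paper: both compute $j^*\alpha = A$ from the defining formula for the Liouville form (using $\pi\circ j = \mathrm{id}$), deduce $j^*\omega = \dd A = B$, and then read off symplecticity from non-degeneracy of $B$. You are somewhat more thorough in that you explicitly close the non-degeneracy argument via Assumption~\ref{nondegeneracy} and relation~\eqref{relationBBmatrice}, and you supply the dimension count for the splitting~\eqref{decompspace}, both of which the paper leaves implicit.
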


\begin{proof}
To say that $\Sigma$ is a symplectic submanifold of $T^*M$ means that the restriction of $\omega$ to $\Sigma$ is non-degenerate. Written with the embedding $j$, this restriction is $j^* \omega$. Actually, using the definition (\ref{defLiouville}) of $\alpha$ with $p = A_q$ and $\mathcal{V} = d_q j (Q)$, we get
$$\forall Q \in T_qM, \quad  (j^*\alpha)_{q}(Q) = A_q( \pi_* d_qj(Q)) = A_q(Q).$$
Hence $$j^* \alpha = A, \quad \text{so} \quad j^*(\dd \alpha) = \dd A = B.$$
\end{proof}

Since any $j(q)$ is a critical point of $H$, the Hessian of $H$ at $j(q)$ is well defined and independant of any choice of coordinates. We now compute this Hessian according to the decomposition (\ref{decompspace}):
\begin{lemma}\label{Hessian} The Hessian $T^2_{j(q)}H$, as a bilinear form on $T_{j(q)}(T^*M)$, can be written:
$$ T^2_{j(q)} H (\mathcal{V},\mathcal{V}) = 0 \quad \text{if} \quad \mathcal{V} \in T_{j(q)}\Sigma,$$
$$ T^2_{j(q)} H (\mathcal{V},\mathcal{V}) = 2 \vert \B(q) \pi_* \mathcal{V} \vert^2_{g_q} \quad \text{if} \quad \mathcal{V} \in T_{j(q)}\Sigma^{\perp}.$$
\end{lemma}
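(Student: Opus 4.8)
The plan is to compute $T^2_{j(q)}H$ directly in local coordinates and then re-express the result invariantly using the decomposition (\ref{decompspace}). Fix $q$ and pick local coordinates; by (\ref{HamiltonienCoordonnees}) we have $H(q,p) = \sum_{ij} g^{ij}(q)(p_i - A_i(q))(p_j - A_j(q))$. Since $j(q) = (q, A(q))$ is a critical point (indeed $H \geq 0$ and $H(j(q)) = 0$), the Hessian is well defined. Because $H$ vanishes to second order along the fibre coordinate $p - A(q)$, the only surviving terms in the second-order Taylor expansion at $(q, A(q))$ come from differentiating the quadratic factor $(p_i - A_i)(p_j - A_j)$; variations of $g^{ij}$ contribute nothing since they multiply a factor vanishing to order two. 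Writing a tangent vector as $\mathcal{V} \leftrightarrow (Q,P)$ with $Q \in T_qM$, $P \in T_qM^*$, so that the variation of $p_i - A_i(q)$ along $\mathcal{V}$ is $P_i - \sum_k \partial_k A_i(q) Q_k = (P - T_qA\, Q)_i$, I get
\begin{align*}
T^2_{j(q)}H(\mathcal{V},\mathcal{V}) = 2 \sum_{ij} g^{ij}(q)(P - T_qA\, Q)_i (P - T_qA\, Q)_j = 2\, |P - T_qA\, Q|^2_{g^*_q}.
\end{align*}

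Next I identify the two subspaces. The tangent space $T_{j(q)}\Sigma$ is exactly $\{ d_q j(Q) : Q \in T_qM \} = \{ (Q, T_qA\, Q) : Q \in T_qM \}$, on which $P - T_qA\, Q = 0$, giving the first assertion $T^2_{j(q)}H(\mathcal{V},\mathcal{V}) = 0$ for $\mathcal{V} \in T_{j(q)}\Sigma$. For the symplectic orthogonal, I use the formula for $\omega$ recorded above: $\mathcal{V} = (Q,P) \in T_{j(q)}\Sigma^\perp$ means $\omega((Q,P),(Q', T_qA\, Q')) = 0$ for all $Q'$, i.e. $\langle T_qA\, Q', Q\rangle - \langle P, Q'\rangle = 0$ for all $Q'$, i.e. $P = {}^t(T_qA)\, Q$ (the transpose being taken with respect to the duality pairing, with no metric involved). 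For such $\mathcal{V}$, $P - T_qA\, Q = ({}^t T_qA - T_qA)Q = \iota_Q B$ by (\ref{iQBCoordonees}), and then $|\iota_Q B|^2_{g^*_q} = |g_q(\B(q)Q, \cdot)|^2_{g^*_q} = |\B(q)Q|^2_{g_q}$ by the defining relation (\ref{defBmatrix}) of $\B$ together with (\ref{defgstar}). Finally $\pi_*\mathcal{V} = Q$, so $T^2_{j(q)}H(\mathcal{V},\mathcal{V}) = 2|\B(q)\pi_*\mathcal{V}|^2_{g_q}$, which is the second assertion. It remains to check that these two formulas are consistent on the intersection, but $T_{j(q)}\Sigma \cap T_{j(q)}\Sigma^\perp = \{0\}$ by Lemma \ref{j*omega=B}, so there is nothing to reconcile; and since $\B(q)$ is skew-symmetric hence injective precisely when $\Sigma$ is symplectic, the quadratic form is positive definite on $\Sigma^\perp$, consistent with $H \geq 0$.

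The main obstacle is purely bookkeeping: keeping straight which "transpose" is meant where — the transpose ${}^t T_qA : T_qM \to T_q^*M$ appearing in the characterization of $\Sigma^\perp$ is the one adjoint to the duality bracket (not to the metric), and one must verify that $({}^t T_qA - T_qA)Q$ really equals $\iota_Q B$ as a $1$-form via (\ref{BenCoordonnees})–(\ref{iQBCoordonees}), and then convert $|\cdot|_{g^*_q}$ on $1$-forms to $|\cdot|_{g_q}$ on vectors via (\ref{defgstar}) and (\ref{defBmatrix}). Once the identifications $T_{j(q)}\Sigma = \mathrm{graph}(T_qA)$ and $T_{j(q)}\Sigma^\perp = \mathrm{graph}({}^t T_qA)$ are in place, everything else is a one-line substitution.
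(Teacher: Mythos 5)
Your proof is correct and follows essentially the same route as the paper: compute the Hessian in coordinates to get $2\,|P - T_qA\,Q|^2_{g^*_q}$, identify $T_{j(q)}\Sigma$ and $T_{j(q)}\Sigma^\perp$ as the graphs of $T_qA$ and ${}^tT_qA$ respectively, and substitute. The only real difference is your final step, where you pass from $|\iota_Q B|^2_{g^*_q}$ to $|\B(q)Q|^2_{g_q}$ invariantly via (\ref{defBmatrix}) and (\ref{defgstar}) rather than by the paper's explicit index chase; your shortcut is clean and correct.
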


\begin{proof}
Using local coordinates on $M$, we will denote every $\mathcal{V} \in T_{(q,p)} (T^*M)$, as $(Q,P) \in T_qM \times T_qM^*$. In these coordinates, with the notations introduced in section \ref{sectioncoordinates}, 
$$\Sigma \equiv \lbrace (q, \A(q)), \quad q \in \R^d \rbrace$$
so that
\begin{align}\label{TSigmaCoordonnees}
T_{j(q)} \Sigma = \lbrace (Q,P)\in T_qM \times T_qM^*, P = T_qA \cdot Q  \rbrace.
\end{align}
We can also describe $T_{j(q)}\Sigma^{\perp}$ using these coordinates. Indeed,
\begin{align*}
(Q,P) \in T_{j(q)} \Sigma^{\perp} &\Leftrightarrow \forall Q_0 \in T_qM, \quad \omega( (Q,P), (Q_0,  T_qA \cdot Q_0) ) = 0 \\
& \Leftrightarrow \forall Q_0 \in T_qM, \quad \langle P, Q_0 \rangle = \langle T_q A \cdot Q_0,Q \rangle \\
& \Leftrightarrow P = \ ^tT_q A \cdot Q.
\end{align*}
Hence
\begin{align}\label{TSigmaPerpCoordonnees}
T_{j(q)}\Sigma^{\perp} = \lbrace (Q,P) , P = \ ^tT_qA \cdot Q \rbrace.
\end{align}
From the expression (\ref{HamiltonienCoordonnees}) of $H$ in coordinates, we deduce that:
\begin{align*}
T_{(q,p)}H (Q,P) =& \ 2 \sum_{ij} g^{ij}(q) (p_i - A_i(q))(P_j - \nabla_q A_j \cdot Q) \\ &+ \sum_{ijk} \partial_k g^{ij}(q) Q_k (p_i-A_i(q))(p_j-A_j(q)),
\end{align*}
so that the Hessian of $H$ in coordinates is:
\begin{align*}
T_{j(q)}^2H((Q,P),(Q,P)) &= 2 \sum_{ij} g^{ij}(q) (P_i - \nabla_q A_i \cdot Q)(P_j - \nabla_q A_j \cdot Q)\\ &= 2 \vert P - T_q A\cdot Q \vert^2_{g^*_q}.
\end{align*}
It follows from (\ref{TSigmaCoordonnees}) that
\begin{align*}
\forall (Q,P) \in T_{j(q)} \Sigma, \quad T^2_{j(q)}H((Q,P),(Q,P)) = 0,
\end{align*}
and from (\ref{TSigmaPerpCoordonnees}) and (\ref{iQBCoordonees}) that
\begin{align*}
\forall (Q,P) \in T_{j(q)} \Sigma^{\perp}, \quad T^2_{j(q)} H((Q,P),(Q,P)) &= 2 \vert  ( \  ^t T_q A - T_q A)Q \vert^2_{g^*_q}\\
&= \vert \iota_Q B \vert^2_{g^*_q}.
\end{align*}
Let us rewrite this using $\B$. Note that:
\begin{align*}
\vert \iota_Q B \vert^2_{g^*_q} &= \sum_{ij} g^{ij}(q) \left( \sum_{k i} B_{k i} Q_k \right) \left( \sum_{\ell j} B_{\ell j} Q_{\ell} \right) = \sum_{k \ell} \left( \sum_{ij} g^{ij}B_{ki}B_{\ell j} \right) Q_k Q_{\ell},
\end{align*}
and keeping in mind that $(g^{ij})$ is the inverse matrix of $(g_{ij})$ together with the relation (\ref{relationBBmatrice}) between $B$ and $\B$, we have
\begin{align*}
\sum_{ij} g^{ij} B_{ki}B_{\ell j} = \sum_{ijk' \ell'} g^{ij} g_{k' i} g_{\ell' j} \B_{k' k} \B_{\ell' \ell} = \sum_{k' \ell'} g_{k' \ell'} \B_{k' k} \B_{\ell' \ell},
\end{align*}
and so
\begin{align*}
\vert \iota_Q B \vert^2_{g^*_q} = \sum_{k' \ell'} g_{k' \ell'} \left( \sum_{k} \B_{k' k} Q_k \right) \left( \sum_{\ell} \B_{\ell' \ell} Q_{\ell}\right) = \vert \B(q) Q \vert^2_{g_q}.
\end{align*}
\end{proof}

We endow $\Omega \times \R^d_z$ with the symplectic form:
$$\omega_0(q,z) = B \oplus \sum_{j=1}^{d/2} \dd \xi_j \wedge \dd x_j,$$ with the notation $z = (x,\xi)$. $(\Sigma, B)$ is a $d$-dimensional symplectic submanifold of $(T^*M,\omega)$. The following Darboux-Weinstein lemma claims that this situation is modelled on the submanifold $\Sigma_0 = \Omega \times \lbrace 0 \rbrace$ of $(\Omega \times \R^d_z, \omega_0)$.

\begin{lemma} \label{Phizero}
There exists a local diffeomorphism
$$\Phi_0 : \Omega \times \R^d_z \rightarrow T^*M$$
such that $$\Phi_0^*\omega = \omega_0, \quad \text{and } \Phi_0( \Sigma_0 ) = \Sigma.$$
\end{lemma}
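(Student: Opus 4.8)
The plan is to apply the Darboux–Weinstein isotopy (relative Moser) theorem, which states that if two symplectic forms agree on a submanifold then they are conjugate by a diffeomorphism fixing that submanifold; the nontrivial input is to produce an \emph{initial} diffeomorphism matching the two symplectic forms along $\Sigma$. First I would construct a diffeomorphism $\psi$ from a neighborhood of $\Sigma_0 = \Omega\times\{0\}$ in $\Omega\times\R^d_z$ onto a neighborhood of $\Sigma$ in $T^*M$ that restricts to $j\circ\varphi_{\mathrm{id}}$ (i.e.\ $(q,0)\mapsto j(q)$) on $\Sigma_0$ and whose differential at each point of $\Sigma_0$ intertwines the symplectic forms $\omega_0$ and $\omega$. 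For this I would use the splitting $T_{j(q)}(T^*M) = T_{j(q)}\Sigma \oplus T_{j(q)}\Sigma^{\perp}$ from Lemma \ref{j*omega=B}: on the first factor $j^*\omega = B = \omega_0|_{\Sigma_0}$ by that lemma, so the tangent map to $j$ already works there; on the normal factor $T_{j(q)}\Sigma^{\perp}$ one needs a linear symplectic identification with $(\R^d_z, \sum \dd\xi_j\wedge\dd x_j)$ depending smoothly on $q$, which exists because both are symplectic vector spaces of the same dimension (one may even choose it compatibly with the eigenbasis $u_j(q),v_j(q)$ of $\B(q)$, using Lemma \ref{Hessian}, though smoothness is all that is needed here). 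Extending off $\Sigma_0$, e.g.\ via a tubular neighborhood and the exponential of the chosen normal identification, yields $\psi$ with $\psi(\Sigma_0)=\Sigma$ and $\psi^*\omega|_{\Sigma_0} = \omega_0|_{\Sigma_0}$ as bilinear forms on the full tangent spaces.

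Next I would run Moser's trick relative to $\Sigma_0$. Set $\omega_1 = \omega_0$ and $\omega_2 = \psi^*\omega$; these are closed, nondegenerate near $\Sigma_0$, and agree (with all first-order data) along $\Sigma_0$. Writing $\omega_2 - \omega_1 = \dd\beta$ with a $1$-form $\beta$ that vanishes to second order along $\Sigma_0$ (obtained from a relative Poincaré lemma applied to the linear homotopy, using that $\omega_2-\omega_1$ vanishes to first order on $\Sigma_0$), the forms $\omega_t = \omega_1 + t(\omega_2-\omega_1)$ are nondegenerate in a (possibly smaller) neighborhood of $\Sigma_0$ for all $t\in[0,1]$. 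Solving $\iota_{X_t}\omega_t = -\beta$ for the time-dependent vector field $X_t$ gives a field vanishing on $\Sigma_0$, hence its flow $\theta_t$ is defined near $\Sigma_0$ for $t\in[0,1]$, fixes $\Sigma_0$ pointwise, and satisfies $\theta_1^*\omega_2 = \omega_1$. Then $\Phi_0 := \psi\circ\theta_1$ is the desired local diffeomorphism: $\Phi_0^*\omega = \theta_1^*\psi^*\omega = \theta_1^*\omega_2 = \omega_0$, and $\Phi_0(\Sigma_0) = \psi(\theta_1(\Sigma_0)) = \psi(\Sigma_0) = \Sigma$.

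The main obstacle is the first step: assembling a \emph{smooth} (in $q$) normal symplectic identification so that $\psi^*\omega$ and $\omega_0$ coincide to first order along all of $\Sigma_0$, not merely fiberwise — one must be careful that the tubular-neighborhood extension does not spoil the matching of the forms on $T\Sigma_0$ itself (it does not, since there $j^*\omega = B$ exactly), and that nondegeneracy of $\omega_t$ is retained after shrinking $\Omega$ and the $z$-neighborhood. Once this is in place, the Moser homotopy is standard and the rest is routine. Note that the statement is local and we are free to shrink $\Omega$ and replace $\R^d_z$ by a neighborhood of $0$, which is exactly the context in which Theorem \ref{symplectomorphism} is later invoked (with $B_z(\varepsilon)$).
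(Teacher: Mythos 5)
Your plan is essentially the paper's: build an initial diffeomorphism matching $\omega$ and $\omega_0$ to first order along $\Sigma_0$ by using the decomposition $T_{j(q)}(T^*M)=T_{j(q)}\Sigma\oplus T_{j(q)}\Sigma^{\perp}$ together with a smooth symplectic normal frame (the paper writes it explicitly as $e_j(q)=\beta_j(q)^{-1/2}(u_j,{}^t T_qA\,u_j)$, $f_j(q)=\beta_j(q)^{-1/2}(v_j,{}^t T_qA\,v_j)$), and then correct by Moser's relative isotopy, which is exactly what the paper invokes as Lemma \ref{RelativeDarboux}. The only cosmetic differences are that the paper extends the map off $\Sigma_0$ by composing the flows of vector-field extensions of $(e_j,f_j)$ rather than via a tubular-neighborhood exponential, and that it treats the Moser step as a black box rather than re-deriving it (also note that the $\omega$-symplecticity of $(e_j,f_j)$ is verified directly in the proof, not via Lemma \ref{Hessian}).
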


In order to keep track on the construction of $\Phi_0$, we will give the proof of this result.

\begin{proof}
Again, we use local coordinates on $M$ to denote every $\mathcal{V} \in T_{(q,p)}(T^*M)$ as $(Q,P) \in T_qM \times T^*_qM$. For $q \in \Omega$, using the vectors $u_j(q), v_j(q) \in T_qM$ defined in (\ref{defujvj}), we define the vectors
\begin{align*}
e_j(q) = \frac{1}{\sqrt{\beta_j(q)}} \left( u_j(q) , \ ^tT_qA \ u_j(q) \right), \quad  f_j(q) = \frac{1}{\sqrt{\beta_j(q)}} \left( v_j(q), \ ^tT_q A \ v_j(q) \right),
\end{align*}
which are in $T_{j(q)} \Sigma^{\perp}$ by (\ref{TSigmaPerpCoordonnees}). These vectors satisfy
\begin{align}\label{basesymplectique}
\omega_{j(q)}(e_i(q),f_j(q)) = \delta_{ij}, \quad \omega_{j(q)}(e_i(q),e_j(q)) = 0, \quad \omega_{j(q)}(f_i(q),f_j(q)) = 0.
\end{align}
Indeed, the first equality follows from
\begin{align*}
 \omega_{j(q)}(e_i,f_j) 
&= - \frac{1}{\sqrt{\beta_i \beta_j}} \langle ( \ ^t T_q A - T_q A ) u_j,v_j \rangle\\
&= - \frac{1}{\sqrt{\beta_i \beta_j}} B(u_i,v_j)\\
&= - \frac{1}{\sqrt{\beta_i \beta_j}} g_q(\B(q) u_i, v_j)\\
&= \frac{ \beta_i}{\sqrt{\beta_i \beta_j}} g_q(v_i,v_j)\\
&= \delta_{ij},
\end{align*}
and the two others from similar calculations.

Let us construct a $\tilde{\Phi}_0 : \Omega \times \R^d_z \rightarrow T^*M$ such that:
\begin{align}\label{DefPhiZeroTilde1}
\tilde{\Phi}_0(q,0) = j(q),\\
\label{DefPhiZeroTilde2}
\partial_z \tilde{\Phi}_0 (q,0) = L_q,
\end{align}
where $L_q : \R^d \rightarrow T_{j(q)} \Sigma^{\perp}$ is the linear map sending the canonical basis onto $$(e_1(q),f_1(q), ..., e_{d/2}(q),f_{d/2}(q)).$$
For this, we take local vector fields $\hat{e}_j(q,p), \hat{f}_j(q,p) \in T_{(q,p)} (T^*M)$ defined in a neighborhood of $\Sigma$, such that $$\hat{e}_j(j(q)) = e_j(q), \quad \hat{f}_j(j(q)) = f_j(q).$$
In other words, if we see $e_j$ and $f_j$ as vector fields on $\Sigma$ using $j(q)$, we extend them to a neighborhood of $\Sigma$. Then we consider the associated flows, defined on a neighborhood of $\Sigma$ by:
\begin{align*}
&\frac{\partial \phi_j^{x_j}}{\partial x_j}(q,p) = \hat{e}_j(\phi_j^{x_j}(q,p)), \quad x_j \in \R,\\
&\frac{\partial \psi_j^{\xi_j}}{\partial \xi_j}(q,p) = \hat{f}_j(\psi_j^{\xi_j}(q,p)), \quad \xi_j \in \R,\\
&\phi_j^0(q,p) = \psi_j^0(q,p)=(q,p).
\end{align*}
Then
$$\tilde{\Phi}_0(q,z) := \phi_1^{x_1} \circ \psi_1^{\xi_1} \circ ... \circ \phi_{d/2}^{x_{d/2}} \circ \psi_{d/2}^{\xi_{d/2}}(j(q))$$
satisfies (\ref{DefPhiZeroTilde1}) and (\ref{DefPhiZeroTilde2}). Hence,
if $q\in \Omega,$ the linear tangent map $$T_{(q,0)}\tilde{\Phi}_0 : T_qM \oplus \R^d \rightarrow T_{j(q)}\Sigma \oplus T_{j(q)}\Sigma^{\perp}$$ acts as:
$$\begin{pmatrix}
T_qj & 0 \\
0 & L_q
\end{pmatrix}.$$
In particular, $\tilde{\Phi}_0^*\omega = \omega_0$ on $\lbrace z=0 \rbrace$ by (\ref{basesymplectique}) and lemma \ref{j*omega=B}.
By Weinstein lemma \ref{RelativeDarboux} (Appendix), for $\varepsilon >0$ small enough there exists a diffeomorphism $S :  \Omega \times B_z(\varepsilon) \rightarrow  \Omega \times B_z(\varepsilon)$ such that $S(q,z) = (q,z) + \grandO (\vert z \vert^2)$ and $S^* \tilde{\Phi}_0^* \omega = \omega_0$. Then $\Phi_0 = \tilde{\Phi}_0 \circ S$ is the desired symplectomorphism.
\end{proof}

\subsection{Proof of Theorem \ref{symplectomorphism}}

Now we can prove the normal form for the classical Hamiltonian. Up to reducing $\Omega$, we can take symplectic coordinates $w=(y,\eta) \in \R^d$ to describe $\Omega$, thanks to the Darboux lemma:
$$\varphi : \Omega \rightarrow V \subset \R^d_w.$$
We get a new symplectomorphism
$$\Phi : V \times B_z(\varepsilon) \rightarrow U \subset T^*M,$$
defined by
$$\Phi(w,z) = \Phi_0(\varphi^{-1}(w),z).$$
It remains to compute a Taylor expansion of $H$ in these coordinates.
Using the Taylor Formula for $\hat{H}= H \circ \Phi$, we get:
\begin{align}\label{TaylorexpH00}
\hat{H}(w,z) = \hat{H}(w,0) + \partial_z \hat{H}_{\vert z=0} (z) + \frac{1}{2} \partial_z^2 \hat{H}_{\vert z=0} (z,z) + \grandO(\vert z \vert^3).
\end{align}
By the chain rule, we have (with $q = \varphi^{-1}(w)$):
$$\partial_z \hat{H}_{\vert z=0} (z) = T_{j(q)} H ( \partial_z \Phi_{\vert z=0} (z) ) = 0,$$
because $T_{j(q)}H = 0$, and
$$\partial_z^2 \hat{H}_{\vert z=0}(z,z) = T_{j(q)}^{ 2} H ( \partial_z \Phi_{\vert z=0} (z) , \partial_z \Phi_{\vert z=0} (z) ).$$
But $\partial_z \Phi_{\vert z=0} $ sends the canonical basis onto $(e_1(q), f_1(q), ... \ , e_{d/2}(q),  f_{d/2}(q) )$, so we get from Lemma \ref{Hessian}:
$$ \frac{1}{2}\partial_z^2 \hat{H}_{\vert z=0}(z,z) = \sum_{j=1}^{d/2} \beta_j(q) \vert z_j \vert^2. $$
Hence (\ref{TaylorexpH00}) gives:
$$\hat{H}(w,z) = H \circ \Phi (w,z) = \sum_{j=1}^{d/2} \hat{\beta}_j(w) \vert z_j \vert^2 + \grandO (\vert z \vert^3).$$

\section{\textbf{The Formal Birkhoff Normal Form}}

\subsection{The Hamiltonian $\hat{H}$}

In the new coordinates given by Theorem \ref{symplectomorphism}, we have a Hamiltonian $\hat{H}(w,z)$ of the form: 
$$\hat{H}(w,z) = H^0(w,z) + \grandO( \vert z \vert^3), \quad \text{ where } H^0(w,z) = \sum_{j=1}^{d/2} \hat{\beta}_j(w) \vert z_j \vert^2.$$

$H^0$ is defined for $w \in V$, but we extend the functions $\hat{\beta}_j$ to $\R_w^{d}$ such that:
\begin{align}\label{extentionH0}
\sum_{j=1}^{d/2} \hat{\beta}_j(w) \geq \tilde{b}_1 \quad \text{for } w \in V^c.
\end{align}
This is just technical, since we will prove microlocalization results on $V$ in section 6. Then we can construct a Birkhoff normal form, in the spirit of \cite{Sjo92} and \cite{article}, with $w$ as a parameter. 

\subsection{The space of formal series} \label{SectionFormalSeries}

We will work in the space of formal series $$\mathcal{E}= \mathcal{C}^{\infty}(\R^d_w) [[x,\xi,\hbar]].$$ We endow $\mathcal{E}$ with the Moyal product $\star$, compatible with the Weyl quantization (with respect to all the variables $z$ and $w$). Given a pseudodifferential operator $\mathcal{A}= \Op (a)$ we will denote $\symbolseries (\mathcal{A})$ or $[a]$ the formal Taylor series of $a$ at zero, in the variables $x$, $\xi$, $\hbar$. With this notation, the compatibility of $\star$ with the Weyl quantization means $$\symbolseries ( \mathcal{A}\mathcal{B}) =\symbolseries ( \mathcal{A})\star \symbolseries ( \mathcal{B}).$$ The reader can find the main results on $\hbar$-pseudodifferential operators in \cite{Martinez} or \cite{Zworski}.\\

We define the degree of $x^{\alpha} \xi^{\gamma} \hbar^{\ell}$ to be $\vert \alpha \vert + \vert \gamma \vert + 2\ell$. Hence, we can define the degree and valuation of a series $\kappa$, which depends on the point $w \in \R^d$. We denote $\grandO_N$ the space of formal series with valuation at least $N$ on $V$, and $\mathcal{D}_N$  the space spanned by monomials of degree $N$ on $V$ ($V \subset \R^d_w$ is given by Theorem \ref{symplectomorphism}). We denote $z_j$ the formal series $x_j + i \xi_j$. Thus every $\kappa \in \mathcal{E}$ can by written
$$ \kappa = \sum_{\alpha \gamma \ell} c_{\alpha \gamma \ell}(w) z^{\alpha} \bar{z}^{\gamma} \hbar^{\ell},$$
with the notation $$z^{\alpha} = z_1^{\alpha_1} ... z_{d/2}^{\alpha_{d/2}}.$$
For $\kappa_1$, $\kappa_2 \in \mathcal{E}$, we denote $\ad_{\kappa_1} \kappa_2 = [ \kappa_1, \kappa_2 ] = \kappa_1 \star \kappa_2 - \kappa_2 \star \kappa_1$. It is well known that $[\kappa_1,\kappa_2]$ is of order $\hbar$, so for $N_1+N_2 \geq 2$, we have 
\begin{align}\label{OrdreEtCommutateurs}
\frac{1}{\hbar} [\grandO_{N_1}, \grandO_{N_2}] \subset \grandO_{N_1+N_2-2}.
\end{align}
Explicitly, we have
\begin{align}\label{CommutatorOfPseudo}
[\kappa_1, \kappa_2 ](z,w,\hbar) = 2 \sinh \left( \frac{\hbar}{2i} \square \right) ( f(z',w',\hbar)g(z'',w'',\hbar)) \vert_{z'=z''=z, w'=w''=w},
\end{align}
where $[f]=\kappa_1$, $[g]= \kappa_2$, and $$\square = \sum_{j=1}^{d/2} \left( \partial_{\xi_j'} \partial_{x_j''} - \partial_{x_j'} \partial_{\xi_j''} + \partial_{\eta_j'} \partial_{y_j''} - \partial_{y_j'} \partial_{\eta_j''} \right).$$
From formula (\ref{CommutatorOfPseudo}), a simple computation yields to
\begin{align}\label{AdZj2=}
\frac{i}{\hbar} \ad_{\vert z_j \vert^2} ( z^{\alpha} \bar{z}^{\beta} \hbar^{\ell} ) = \lbrace \vert z_j \vert^2, z^{\alpha} \bar{z}^{\gamma} \hbar^{\ell} \rbrace = (\alpha_j - \gamma_j) z^{\alpha} \bar{z}^{\gamma} \hbar^{\ell}.
\end{align}

\subsection{The formal normal form}

In order to prove Theorem \ref{ThmFormeNormale}, we look for a pseudodifferential operator $\mathcal{Q}_{\hbar}$ such that
\begin{align}\label{30041900}
e^{\frac{i}{\hbar} \mathcal{Q}_{\hbar}} \Op \hat{H} e^{-\frac{i}{\hbar} \mathcal{Q}_{\hbar}}
\end{align}
 commutes with the harmonic oscillators $\Ih^{(j)}, (1 \leq j \leq d/2)$ introduced in (\ref{DefOscillateurs}). At the formal level, expression (\ref{30041900}) becomes 
\begin{align} \label{30041901}
 e^{\frac{i}{\hbar} \ad_{\tau}} ( H^0 + \gamma ),
\end{align} 
 where $H_0 + \gamma$ is the Taylor expansion of $\hat{H}$, and $\tau = \symbolseries (\mathcal{Q}_{\hbar}).$ Moreover, $$\symbolseries ( \Ih^{(j)} ) = \vert z_j \vert^2,$$ so we want (\ref{30041901}) to be equal to $H^0 + \kappa$, where $[\kappa, \vert z_j \vert^2] = 0$, which is equivalent to say that $\kappa$ is a series in $(\vert z_1 \vert^2, ..., \vert z_{d/2} \vert^2, \hbar)$. This is possible modulo $\grandO_r$, as stated in the following theorem. We recall that $r$ is the non-resonance order, defined in (\ref{defr}), and that we assumed $r \geq 3$.

\begin{theorem} \label{algtheorem}
If $\gamma \in \grandO_3$, there exist $\tau, \kappa, \rho \in \grandO_3$ such that:\\

$\bullet \quad e^{\frac{i}{\hbar} \ad_{\tau}} ( H^0 + \gamma )= H^0 + \kappa + \rho,$\\

$\bullet \quad [\kappa, \vert z_j \vert^2] = 0 \quad \text{for } 1 \leq j \leq d/2,$\\

$\bullet \quad \rho \in \grandO_r.$\\
\end{theorem}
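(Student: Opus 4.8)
The plan is to construct $\tau$, $\kappa$ and $\rho$ degree by degree, solving at each step a cohomological equation governed by $\ad_{H^0}$. Writing $\tau = \sum_{N \geq 3} \tau_N$ with $\tau_N \in \mathcal{D}_N$ (and similarly for $\kappa$, $\rho$), I expand
\begin{align*}
e^{\frac{i}{\hbar} \ad_\tau}(H^0 + \gamma) = H^0 + \gamma + \frac{i}{\hbar}\ad_\tau H^0 + \Big( \frac{i}{\hbar}\ad_\tau \gamma + \tfrac12 \big(\tfrac{i}{\hbar}\ad_\tau\big)^2 H^0 + \cdots \Big),
\end{align*}
and note, using (\ref{OrdreEtCommutateurs}), that since $\tau \in \grandO_3$ and $H^0 \in \mathcal{D}_2$, the term $\frac{i}{\hbar}\ad_{\tau_N} H^0$ lands in $\mathcal{D}_N$, while all remaining contributions to degree $N$ involve only $\tau_3,\dots,\tau_{N-1}$ and $\gamma$. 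So the degree-$N$ part of the identity reads
\begin{align}\label{planeq}
\frac{i}{\hbar} \ad_{H^0} \tau_N + \kappa_N + \rho_N = R_N,
\end{align}
where $R_N \in \mathcal{D}_N$ is a known quantity depending only on the already-constructed $\tau_{<N}$, $\kappa_{<N}$, $\rho_{<N}$ and on $\gamma$, and $-\frac{i}{\hbar}\ad_{H^0}\tau_N = \frac{i}{\hbar}\ad_{\tau_N}H^0$.

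The heart of the matter is then to invert $\frac{i}{\hbar}\ad_{H^0}$ on $\mathcal{D}_N$ modulo its kernel. Since $H^0 = \sum_j \hat\beta_j(w)|z_j|^2$, formula (\ref{AdZj2=}) gives
\begin{align*}
\frac{i}{\hbar}\ad_{H^0}\big(z^\alpha \bar z^\gamma \hbar^\ell\big) = \langle \alpha - \gamma, \hat\beta(w)\rangle \, z^\alpha \bar z^\gamma \hbar^\ell,
\end{align*}
so $\ad_{H^0}$ is diagonal in the monomial basis with eigenvalue $\langle \alpha - \gamma, \hat\beta(w)\rangle$. For $N < r$, the non-resonance condition (\ref{fffffrrr}) guarantees that $\langle \alpha - \gamma, \hat\beta(w)\rangle \neq 0$ on $V$ whenever $\alpha \neq \gamma$ and $|\alpha - \gamma| \leq |\alpha| + |\gamma| \leq N < r$ (using that $\hat\beta$ is bounded below on $V$, the division is smooth in $w$). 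Thus for $N < r$ I split $R_N = R_N^{\mathrm{res}} + R_N^{\mathrm{nonres}}$ into its resonant part (the monomials with $\alpha = \gamma$) and non-resonant part, set $\kappa_N := R_N^{\mathrm{res}}$ — which is a polynomial in $|z_1|^2,\dots,|z_{d/2}|^2,\hbar$, hence commutes with every $|z_j|^2$ — set $\rho_N := 0$, and define $\tau_N$ by dividing $R_N^{\mathrm{nonres}}$ monomial-by-monomial by $\langle \alpha - \gamma, \hat\beta(w)\rangle$, which solves (\ref{planeq}). For $N \geq r$ there is no need to simplify further: I simply put $\tau_N := 0$, $\kappa_N := 0$, and $\rho_N := R_N$, so that all the accumulated higher-order terms are absorbed into $\rho \in \grandO_r$. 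Extending $\hat\beta_j$ off $V$ as in (\ref{extentionH0}) is harmless because the whole construction is carried out as an identity of formal series with $\mathcal{C}^\infty(\R^d_w)$ coefficients, and the divisions only need to make sense on $V$.

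The main obstacle — really the only non-formal point — is verifying that the divisions producing $\tau_N$ yield genuinely smooth coefficients on $V$ (not merely pointwise-defined ones), i.e. that $w \mapsto \langle \alpha-\gamma,\hat\beta(w)\rangle^{-1}$ is in $\mathcal{C}^\infty(V)$ for each relevant $(\alpha,\gamma)$; this is exactly where Assumptions 3 and 4 enter, through (\ref{fffffrrr}) and the positivity and smoothness of the $\beta_j$ on $\Omega$, and where the restriction $N < r$ is essential. Everything else is the standard bookkeeping of the Birkhoff normal form: one checks by induction on $N$ that at each stage $R_N$ depends only on lower-order data (guaranteed by (\ref{OrdreEtCommutateurs}) and the fact that $\ad$ raises valuation by $2$ after dividing by $\hbar$), that $\tau, \kappa, \rho \in \grandO_3$ since the construction starts at $N = 3$, and that the formal series so obtained satisfy the three bullet points. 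I would present the induction cleanly by first isolating the cohomological lemma (``$\frac{i}{\hbar}\ad_{H^0}: \mathcal{D}_N \to \mathcal{D}_N$ is surjective onto the non-resonant monomials with smooth inverse, for $N < r$'') and then running the recursion.
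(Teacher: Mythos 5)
Your approach is essentially the same as the paper's: a degree-by-degree recursion in which the cohomological equation at degree $N$ is solved by splitting $R_N$ into its resonant part (kept as $\kappa_N$) and its non-resonant part (divided by the small divisors $\langle\alpha-\gamma,\hat\beta(w)\rangle$, using (\ref{fffffrrr}) and the lower bound on $\hat\beta_j$ to produce a smooth $\tau_N$), with everything from degree $r$ onward dumped into $\rho$. One point is stated imprecisely: $\frac{i}{\hbar}\ad_{H^0}$ is \emph{not} exactly diagonal on monomials $c(w)z^\alpha\bar z^\gamma\hbar^\ell$, because $H^0 = \sum_j \hat\beta_j(w)|z_j|^2$ has $w$-dependent coefficients; the Leibniz split gives
$$\frac{i}{\hbar}\ad_{H^0}\tau' = \sum_j\hat\beta_j(w)\,\frac{i}{\hbar}\ad_{|z_j|^2}(\tau') + \sum_j\frac{i}{\hbar}\ad_{\hat\beta_j}(\tau')\,|z_j|^2,$$
and only the first sum is the diagonal operator you use; the second sum lies in $\grandO_{N+1}$ (this is exactly what the paper verifies), so your equation (\ref{planeq}) holds only modulo $\grandO_{N+1}$, and the correction terms must be absorbed into the later $R_{N'}$'s. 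Your recursive bookkeeping would handle this once the modulus is made explicit, so this is an imprecision rather than a genuine gap, but it is precisely the non-trivial step the paper spells out and you should too.
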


\begin{proof}
Let $3 \leq N \leq r - 1$. Assume that we have, for a $\tau_N \in \grandO_3$:
$$e^{\frac{i}{\hbar} \ad_{\tau_N}} (H^0+\gamma) = H^0 + K_3 + ... + K_{N-1} + R_{N} + \grandO_{N+1},$$ where $K_i \in \mathcal{D}_i$ commutes with $\vert z_j \vert^2$ ($1 \leq j \leq d/2$) and where $R_{N} \in \mathcal{D}_{N}$. Using (\ref{OrdreEtCommutateurs}), we have for any $\tau' \in \mathcal{D}_{N}$:
\begin{align*}
e^{\frac{i}{\hbar} \ad_{\tau_N + \tau'}} (H^0+\gamma) &= e^{\frac{i}{\hbar} \ad_{\tau'}} \left( H^0 + K_3 + ... + K_{N-1} + R_{N} + \grandO_{N+1} \right) \\
&= H^0 + K_3 + ... + K_{N-1} + R_{N} + \frac{i}{\hbar} \ad_{\tau'} H^0 + \grandO_{N+1}.
\end{align*}
Thus, we look for $\tau'$ and $K_N \in \mathcal{D}_N$ such that:
\begin{align}\label{eqFormelRecurrence1}
R_{N} = K_{N} + \frac{i}{\hbar} \ad_{H^0} \tau' \quad \text{modulo } \grandO_{N+1}.
\end{align}
To solve this equation, we need to study $\ad_{H^0}$. Since $H^0 = \sum_j \hat{\beta}_j(w) \vert z_j \vert^2$,
$$\frac{i}{\hbar}\ad_{H^0} \tau' = \sum_{j=1}^{d/2} \left(  \bbeta_j(w) \frac{i}{\hbar} \ad_{\vert z_j \vert^2} (\tau') + \frac{i}{\hbar} \ad_{\bbeta_j} (\tau' ) \vert z_j  \vert^2 \right).$$
Since $\bbeta_j$ only depends on $w$,
$$\frac{i}{\hbar} \ad_{\bbeta_j} (\tau') \in \grandO_{N-1},$$
(see formula (\ref{CommutatorOfPseudo})). Hence
$$\frac{i}{\hbar}\ad_{H^0} \tau' = \sum_{j=1}^{d/2}  \bbeta_j(w) \frac{i}{\hbar} \ad_{\vert z_j \vert^2} (\tau') + \grandO_{N+1}. $$
Thus equation (\ref{eqFormelRecurrence1}) can be rewritten
\begin{align}\label{eqFormelReccurrence2}
R_N = K_N + T( \tau') + \grandO_{N+1},
\end{align}
with the notation
$$T  = \sum_{j=1}^{d/2} \bbeta_j(w) \frac{i}{\hbar} \ad_{\vert z_j \vert^2}.$$
From formula (\ref{AdZj2=}) we see that $T$ acts on monomials as 
\begin{align}\label{TActsOnMonomials}
T ( c(w) z^{\alpha} \bar{z}^{\gamma} ) = \langle \alpha - \gamma , \bbeta(w) \rangle c(w) z^{\alpha} \bar{z}^{\gamma}.
\end{align}
Thus, if we write $$R_{N} = \sum_{\vert \alpha \vert + \vert \gamma \vert + 2 \ell = N} r_{\alpha \gamma \ell}(w) z^{\alpha} \bar{z}^{\gamma} \hbar^{\ell},$$ we choose $$K_N = \sum_{\alpha = \gamma} r_{\alpha \gamma \ell} \vert z \vert^{2 \alpha} \hbar^{\ell},$$
which commutes with $\vert z_j \vert^2$ ( $1 \leq j \leq d/2$ ). The rest $R_N - K_N$ is a sum of monomials of the form $r_{\alpha \gamma \ell} z^{\alpha} \bar{z}^{\gamma} \hbar^{\ell}$ with $\alpha \neq \gamma$. As soon as $0 < \vert \alpha - \gamma \vert < r$, we have $\langle \alpha - \gamma , \hat{\beta}(w) \rangle \neq 0$ (by (\ref{fffffrrr}) because $r$ is lower than the resonance order (\ref{defrzero})), so we can define the smooth coefficient $$c_{\alpha \gamma \ell}(w) = \frac{r_{\alpha \gamma \ell }(w)}{\langle \alpha - \gamma , \hat{\beta}(w) \rangle}.$$
Thus (\ref{TActsOnMonomials}) yields to $$T(c_{\alpha \gamma \ell} z^{\alpha} \bar{z}^{\gamma} \hbar^{\ell}) = r_{\alpha \gamma \ell}(w) z^{\alpha} \bar{z}^{\gamma} \hbar^{\ell},$$ so $R_N - K_N$ is in the range of $T$ modulo $\grandO_{N+1}$ because $N \leq r-1$. Hence we solved equation (\ref{eqFormelReccurrence2}), and thus we can iterate until $N=r-1$. The series $\rho$ is the $\grandO_r$ that remains:
$$e^{i\hbar^{-1} \ad_{\tau_N}} (H^0+\gamma) = H^0 + K_3 + ... + K_{r-1} + \rho.$$
\end{proof}

\section{\textbf{The Semiclassical Birkhoff Normal Form}}
\label{SectionFormeNormale}

The next step is to quantize Theorems \ref{symplectomorphism} and \ref{algtheorem}.
 
\subsection{Quantization of Theorem \ref{symplectomorphism}}

Theorem \ref{symplectomorphism} gives a symplectomorphism $\Phi$ reducing $H$ to $\hat{H} = H \circ \Phi$. 
We can quantize this result in the following way. The Egorov Theorem (Thm 5.5.9 in \cite{Martinez}) implies the existence of a Fourier integral operator $$V_{\hbar} :  \Ld(\R^d_{(x,y)}) \rightarrow \Ld(M),$$ associated to the symplectomorphism $\Phi$, and a pseudo-differential operator $\Lhc$ with principal symbol $\hat{H}$ on $V \times B_z(\varepsilon)$ and subprincipal symbol $0$, such that:
\begin{align}\label{quantifsymplecto}
V_{\hbar}^* \Lh V_{\hbar} = \Lhc,
\end{align}
\begin{align}\label{microinv1}
V_{\hbar}^* V_{\hbar} = I \quad \text{microlocally on } V \times B_z(\varepsilon),
\end{align}
and
\begin{align}\label{microinv2}
V_{\hbar} V_{\hbar}^* = I \quad \text{microlocally on } U.
\end{align}

\subsection{Proof of Theorem \ref{ThmFormeNormale}}

By (\ref{quantifsymplecto}), we are reduced to the pseudodifferential operator $\Lhc$, which has a total symbol of the form
\begin{align}
\sigma_{\hbar} = \hat{H} + \hbar^2 \tilde{r}_{\hbar} \quad \text{on } V \times B_z(\varepsilon).
\end{align}
In particular, $\symbolseries \left( \Lhc \right) = H_0 + \gamma$ for some $\gamma \in \grandO_3$, with the notation of section \ref{SectionFormalSeries}.
We want to construct a normal form using a bounded pseudodifferential operator $\mathcal{Q}_{\hbar}$:
\begin{align}\label{30041902}
e^{\frac{i}{\hbar} \mathcal{Q}_{\hbar}} \Lhc e^{-\frac{i}{\hbar} \mathcal{Q}_{\hbar}} = \BNh + R_{\hbar}.
\end{align}
In Theorem \ref{algtheorem}, applied to $\gamma$, we have constructed formal series $\tau$, $\kappa$, and $\rho$ such that
$$ e^{\frac{i}{\hbar} \ad_{\tau}} ( H^0 + \gamma )= H^0 + \kappa + \rho.$$
The idea is to choose pseudodifferential operators $\mathcal{Q}_{\hbar}$ and $\BNh$ such that $\symbolseries \left(  \mathcal{Q}_{\hbar} \right) = \tau$ and $\symbolseries \left( \BNh \right) = \kappa$, and to check that they satisfy (\ref{30041902}). Following this idea, we prove the following Theorem.

\begin{theorem}\label{BNh}
For $\hbar \in (0,\hbar_0]$ small enough, there exist a unitary operator $$\Uh : \Ld (\R^d ) \rightarrow \Ld (\R^d),$$ a smooth function $f^{\star}(w,I_1,...,I_{d/2},\hbar)$, and a pseudodifferential operator $\Rh$ such that:
\begin{align*}
&(i)\quad \Uh^* \Lhc \Uh = \Lh^0 + \Op f^{\star}(w, \Ih^{(1)}, ..., \Ih^{(d/2)}, \hbar) + \Rh,\\
&(ii)\quad f^{\star} \text{ has an arbitrarily small compact } (I_1,...,I_{d/2},\hbar)\text{-support (containing 0),}\\
&(iii)\quad \symbolseries (\Rh) \in \grandO_r \quad \text{and} \quad \symbolseries (\Uh \Rh \Uh^*) \in \grandO_r.
\end{align*}
with $\Ih^{(j)} = \Op( \vert z_j \vert^2 )$ and $\Lh^0 = \Op ( H^0)$. We call 
\begin{align}\label{FormuleFormeNormale}
\BNh = \Lh^0 + \Op f^{\star}(w, \Ih^{(1)}, ..., \Ih^{(d/2)}, \hbar)
\end{align}
the normal form, and $\Rh$ the remainder.
\end{theorem}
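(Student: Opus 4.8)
The plan is to quantize the formal construction of Theorem~\ref{algtheorem} step by step, turning the formal series $\tau,\kappa,\rho$ into genuine bounded pseudodifferential operators and controlling the resulting errors. First I would choose a compactly supported pseudodifferential operator $\opQ$ whose Taylor series at $z=0$ (for $w$ near $0$) equals $\tau$: since $\tau\in\grandO_3$ is a formal series with $\mathcal{C}^\infty$ coefficients in $w$, one picks a representative symbol, multiplies by a cutoff $\chi(z,\hbar)$ supported near $0$ so that $\opQ$ is bounded on $\Ldeux$, and sets $\Uh = e^{\frac{i}{\hbar}\opQ}$; since $\frac{1}{\hbar}\opQ$ is formally self-adjoint and bounded, $\Uh$ is unitary on $\Ldeux$. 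The same construction gives $\BNh$ from $\kappa$: because $\kappa$ is a formal series in $(\vert z_1\vert^2,\dots,\vert z_{d/2}\vert^2,\hbar)$ with $w$-dependent coefficients, one can arrange $\BNh = \Lh^0 + \Op f^\star(w,\Ih^{(1)},\dots,\Ih^{(d/2)},\hbar)$ for a smooth compactly supported $f^\star$ (here the point is that a formal series in the $\vert z_j\vert^2$ quantizes, via the functional calculus of the commuting harmonic oscillators $\Ih^{(j)}$, to an operator of the stated form — this requires a Borel-type summation to realize the formal $f^\star$ as an actual smooth function, which is where property $(ii)$ on the support comes from).

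Next I would verify $(i)$. By construction $\symbolseries(\opQ)=\tau$ and the Moyal product is compatible with Weyl quantization, so
\begin{align*}
\symbolseries\!\left(e^{\frac{i}{\hbar}\opQ}\,\Lhc\,e^{-\frac{i}{\hbar}\opQ}\right) = e^{\frac{i}{\hbar}\ad_\tau}(H^0+\gamma) = H^0 + \kappa + \rho
\end{align*}
by Theorem~\ref{algtheorem}. On the other hand $\symbolseries(\BNh)=\kappa$ by construction, so if we define $\Rh$ by $(i)$, i.e.\ $\Rh := \Uh^*\Lhc\Uh - \BNh$, then $\symbolseries(\Rh) = \rho \in \grandO_r$, which is the first half of $(iii)$. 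For the second half, $\symbolseries(\Uh\Rh\Uh^*) = e^{\frac{i}{\hbar}\ad_\tau}(\rho)$; since $\rho\in\grandO_r$ and $\ad_\tau$ with $\tau\in\grandO_3$ raises the valuation (by~(\ref{OrdreEtCommutateurs}), $\frac{1}{\hbar}[\grandO_{N_1},\grandO_{N_2}]\subset\grandO_{N_1+N_2-2}$, so each bracket with $\tau$ only increases the order), the exponential series preserves $\grandO_r$ and we get $\symbolseries(\Uh\Rh\Uh^*)\in\grandO_r$ as well. One must also check that $\Rh$ really is a pseudodifferential operator (not just a formal object): this follows because $\Uh$ is a bounded $\hbar$-pseudodifferential operator with compactly supported symbol (being $e^{\frac{i}{\hbar}\opQ}$ with $\opQ$ compactly supported, one can use the standard fact that such exponentials are pseudodifferential), and the composition of pseudodifferential operators is pseudodifferential.

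The main obstacle, I expect, is making the exponentials $e^{\pm\frac{i}{\hbar}\opQ}$ honest pseudodifferential operators with good symbol estimates, and matching their full symbols (not just Taylor series) with the formal computation. The formal statement $e^{\frac{i}{\hbar}\ad_\tau}(H^0+\gamma)=H^0+\kappa+\rho$ lives in $\mathcal{E}$, i.e.\ is an identity of Taylor series at $z=0$; to conclude that the \emph{operator} $\Uh^*\Lhc\Uh$ differs from $\BNh$ by something whose symbol is $\grandO_r$ near $w=0$, one needs that the cutoffs introduced to define $\opQ$, $\BNh$, and the functional calculus of the $\Ih^{(j)}$ do not spoil the Taylor expansions, and that remainders coming from stationary phase / symbol composition are themselves $\grandO_r$ or $\grandO(\hbar^\infty)$. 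This is essentially a bookkeeping argument, organized around the observation that all cutoffs are $\equiv 1$ near $z=0$ and $w=0$, so they contribute only to the part of the symbols that vanishes to infinite order there; combined with Borel's lemma to realize the formal $f^\star$, this closes the argument. The unitarity of $\Uh$ and properties $(iv)$--$(v)$ of Theorem~\ref{ThmFormeNormale} then follow by composing with $V_\hbar$ from~(\ref{quantifsymplecto})--(\ref{microinv2}) and using~(\ref{microinv1})--(\ref{microinv2}).
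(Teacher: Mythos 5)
Your strategy matches the paper's: Borel-sum $\tau$ to a compactly supported symbol $c$, set $\Uh = e^{-\frac{i}{\hbar}\Op(c)}$, Borel-sum $\kappa$ to the function $f^{\star}$, define $\Rh$ as the difference, and verify $(iii)$ via $\symbolseries(\Uh\Rh\Uh^*)=e^{\frac{i}{\hbar}\ad_\tau}(\symbolseries(\Rh))$ and the degree-raising property (\ref{OrdreEtCommutateurs}). So the overall route is the same one the paper takes.

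The place where the proposal falls short is exactly the one you flag as the ``main obstacle,'' which you then wave off as ``essentially a bookkeeping argument.'' The identity
$\symbolseries(\Uh^*\Lhc\Uh)=e^{\frac{i}{\hbar}\ad_\tau}(H^0+\gamma)$
cannot simply be asserted: the left side is the Taylor series of an honest operator conjugation while the right side is an infinite formal series, and ``compatibility of the Moyal product with Weyl quantization'' only matches these term by term for \emph{finitely many} commutators. The paper closes this gap with a specific device: write
\begin{align*}
 e^{\frac{i}{\hbar}\Op (c)} \Op (H^0+ r_{\hbar}) e^{-\frac{i}{\hbar} \Op (c) } = \sum_{n=0}^{r-1} \frac{1}{n!} \ad_{i\hbar^{-1}\Op(c)}^n \Op(H^0+ r_{\hbar}) + \text{(integral remainder)}
\end{align*}
via Taylor's formula with integral remainder, match the \emph{finite} sum to $\sum_{n<r}\frac{1}{n!}\ad_{i\hbar^{-1}\tau}^n(H^0+\gamma)$ exactly (this is where Moyal compatibility is legitimately used), and then control the integral remainder by Egorov's theorem together with $\ad_{i\hbar^{-1}\Op(c)}^{r}:\mathcal{E}\to\grandO_r$. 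Without that truncation-plus-Egorov step, $\Rh$ is defined but there is no proof that $\symbolseries(\Rh)\in\grandO_r$, which is the entire content of $(iii)$. You should make this explicit rather than deferring it to bookkeeping; it is the technical heart of the theorem. A secondary, smaller omission is the passage from $f$ to $f^{\star}$ (converting the pointwise-product series $\sum c_{l,m}|z|^{2m}\hbar^\ell$ into a Moyal-power series $\sum c^\star_{l,m}(|z_j|^2)^{\star m_j}\hbar^\ell$), which the paper does carefully so that $\symbolseries(f^{\star}(w,\Ih^{(1)},\dots,\Ih^{(d/2)},\hbar))$ really equals $\kappa$; you gesture at it but do not carry it out.
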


\begin{proof}
The pseudodifferential operator $\Lhc$ defined by (\ref{quantifsymplecto}) has a symbol of the form
$$\sigma_{\hbar} = \hat{H} + \hbar^2 \tilde{r}_{\hbar} \quad \text{on } V \times B_z(\varepsilon),$$
so $\sigma_{\hbar} = H^0 + r_{\hbar}$ with $\gamma := [r_{\hbar} ] \in \grandO_3.$
We apply Theorem \ref{algtheorem} with this $\gamma \in \grandO_3$. The formal series $\kappa \in \grandO_3$ that we get commutes with $\vert z_j \vert^2$ ($1 \leq j \leq d/2$), so by formula (\ref{AdZj2=}) we can write it 
$$\kappa = \sum_{k \geq 2} \sum_{l+\vert m \vert=k} c_{l,m}(w) \vert z_1 \vert^{2m_1} ... \vert z_{d/2} \vert^{2m_{d/2}} \hbar^l,$$ 
and we can change the coefficients to get 
$$\kappa = \sum_{k \geq 2} \sum_{l+\vert m \vert=k} c^{\star}_{l,m}(w) (\vert \zz_1 \vert^2)^{\star m_1} ... (\vert z_{d/2} \vert^2)^{\star m_{d/2}} \hbar^l.$$
We define functions: $$f(w,I_1,...,I_{d/2},\hbar) \text{ with Taylor series } \sum_{k \geq 2} \sum_{l+\vert m \vert=k} c_{l,m}(w) I_1^{m_1}...I_{d/2}^{m_{d/2}} \hbar^l,$$ $$f^{\star}(w, I_1,..., I_{d/2},\hbar)  \text{ with Taylor series } \sum_{k \geq 2} \sum_{l+\vert m \vert=k} c^{\star}_{l,m}(w) I_1^{m_1}...I_{d/2}^{m_{d/2}} \hbar^l,$$ 
and arbitrarily small compact support in $(I_1,...,I_{d/2},\hbar)$ (containing $0$).

Let $c(w,z,\hbar)$ be a smooth function with compact support with Taylor series $\tau$, given by Theorem \ref{algtheorem}. Then by the Taylor formula, we have:
\begin{align*}
 &e^{\frac{i}{\hbar}\Op (c)} \Op (H^0+ r_{\hbar}) e^{-\frac{i}{\hbar} \Op (c) } = \sum_{n=0}^{r-1} \frac{1}{n!} \ad_{i\hbar^{-1}\Op(c)}^n \Op(H^0+ r_{\hbar})\\ &+ \int_0^1 \frac{1}{(r-1)!}(1-t)^{r-1} e^{it\hbar^{-1}\Op(c)} \ad_{i\hbar^{-1}\Op(c)}^{r} \Op(H^0+ r_{\hbar}) e^{-it\hbar^{-1}\Op(c)} \dd t.
\end{align*} 
By the Egorov Theorem and the fact that $ \ad_{i\hbar^{-1}\Op(c)}^{r} : \mathcal{E} \rightarrow \grandO_{r}$ (see (\ref{OrdreEtCommutateurs})), the integral remainder has a symbol with Taylor series in $\grandO_{r}$. Moreover,
\begin{align*}
\symbolseries \left( \sum_{n=0}^{r-1} \frac{1}{n!} \ad_{i\hbar^{-1}\Op(c)}^n \Op(H^0+ r_{\hbar}) \right) 
&= \sum_{n=0}^{r-1} \frac{1}{n!} \ad_{i\hbar^{-1} \tau}^n ( H^0 + \gamma )\\
&= e^{\frac{i}{\hbar} \ad_{\tau}}(H^{0} + \gamma ) + \grandO_{r} \\ &= H^{0} + \kappa + \grandO_{r}.
\end{align*}
Thus, by the definition of $f$, there exists $s(w,z,\hbar)$ such that $[s] \in \grandO_{r}$ and:
\begin{align*}
e^{\frac{i}{\hbar}\Op (c)} \Op (H^0+ r_{\hbar}) e^{-\frac{i}{\hbar} \Op (c) } = \Op(H^{0}) + \Op(f(w,\vert z_1 \vert^2, ..., \vert z_{d/2} \vert^2,\hbar)) + \Op(s).
\end{align*}
Using the compatibility of the quantization with the Moyal product, we have $$\symbolseries ( f^{\star}(w,\Ih^{(1)},...,\Ih^{(d/2)},\hbar) ) = [f(w,\vert z_1 \vert^2, ..., \vert z_{d/2} \vert^2,\hbar)],$$ so we get:
\begin{align*}
e^{\frac{i}{\hbar}\Op (c)} \Op (H^0+ r_{\hbar}) e^{-\frac{i}{\hbar} \Op (c) } = \Op(H^{0}) + \Op ( f^{\star}(w,\Ih^{(1)},...,\Ih^{(d/2)},\hbar)) + \Op(\tilde{s}),
\end{align*}
for a new symbol $\tilde{s}(w,z,\hbar)$ with $[\tilde{s}] \in \grandO_r$. Hence we get
$$\Uh^* \Lhc \Uh = \Op(H^{0}) + \Op ( f^{\star}(w,\Ih^{(1)},...,\Ih^{(d/2)},\hbar)) + \Op(\tilde{s}),$$
with $\Uh = e^{-\frac{i}{\hbar}\Op (c)}$. To prove $(iii)$ with $\Rh= \Op( \tilde{s}),$ note that
$$\symbolseries(\Rh) = [\tilde{s}] \in \grandO_r$$
and $$\symbolseries( \Uh \Rh \Uh^*) = e^{\frac{i}{\hbar} \ad_{\tau}}([\tilde{s}]) \in \grandO_r.$$
\end{proof}

Theorem \ref{ThmFormeNormale} follows with the new operator $\Uht = V_{\hbar} \Uh$ given by (\ref{quantifsymplecto}) and Theorem \ref{BNh}. Point $(ii)$ of Theorem \ref{ThmFormeNormale} is remaining. We prove it here, using that the function $f^{\star}$ can be chosen with arbitrarily small compact support.

\begin{proposition}\label{perturbationH0}
For any $\zeta \in (0,1)$, up to reducing the support of $f^{\star}$, the normal form $\BNh$ of Theorem \ref{BNh} satisfies for $\hbar \in (0,\hbar_0]$ small enough:
$$(1-\zeta) \langle \Lh^0 \psi, \psi \rangle \leq \langle \BNh \psi , \psi \rangle \leq (1+\zeta) \langle \Lh^0 \psi, \psi \rangle, \quad \forall \psi \in \mathcal{S}(\R^d).$$
\end{proposition}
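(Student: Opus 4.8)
The plan is to compare $\BNh$ with $\Lh^0$ by writing $\BNh - \Lh^0 = \Op f^\star(w,\Ih^{(1)},\dots,\Ih^{(d/2)},\hbar)$ and showing this difference is small relative to $\Lh^0$ when the support of $f^\star$ is shrunk. First I would note that $\Lh^0 = \Op\left(\sum_j \hat\beta_j(w)\vert z_j\vert^2\right)$ is bounded below: since the $\hat\beta_j$ are bounded from below by a positive constant on $V$ and, by the extension \eqref{extentionH0}, $\sum_j \hat\beta_j(w) \geq \tilde b_1$ outside $V$, there is a constant $c_1 > 0$ with $\Lh^0 \geq c_1 \hbar$ on $\mathcal S(\R^d)$ (this follows from the sharp Gårding inequality, or directly from the fact that $\Lh^0$ is a $w$-family of positive combinations of harmonic oscillators $\Ih^{(j)} \geq \hbar$). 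In particular $\Lh^0$ is a positive operator, so it suffices to produce, for any $\zeta$, an operator bound $-\zeta \Lh^0 \leq \Op f^\star \leq \zeta \Lh^0$, i.e. $\vert \langle \Op f^\star \psi,\psi\rangle\vert \leq \zeta\langle\Lh^0\psi,\psi\rangle$.

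The key point is the structure of $f^\star$: it is a smooth function of $w$, of the action variables $I_1,\dots,I_{d/2}$, and of $\hbar$, whose Taylor series at the origin has valuation at least $4$ in the degree where $\deg I_j = 2$, $\deg \hbar = 2$ (it starts at $k \geq 2$ in the notation of the proof of Theorem \ref{BNh}, meaning order $\geq 4$). Shrinking the $(I_1,\dots,I_{d/2},\hbar)$-support of $f^\star$ to a ball of radius $\delta$, we can write $f^\star(w,I,\hbar) = \sum_j I_j\, g_j(w,I,\hbar) + \hbar\, g_0(w,I,\hbar)$ with $g_j$ smooth, compactly supported, and satisfying $\vert g_j\vert \leq C\delta$ on the support (because $f^\star$ vanishes to second order in $(I,\hbar)$, each factored coefficient is $\grandO(\delta)$ on a $\delta$-ball). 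Quantizing, $\Op f^\star(w,\Ih^{(1)},\dots,\Ih^{(d/2)},\hbar) = \sum_j \Op(g_j(w,\cdot))\, \Ih^{(j)} + \hbar\, \Op(g_0) + \hbar^2 E_\hbar$, where $E_\hbar$ is a bounded pseudodifferential error coming from reordering (the symbolic calculus commutator terms, each gaining a power of $\hbar$). By the Calderón–Vaillancourt theorem the operators $\Op(g_j)$ are bounded on $\Ld(\R^d)$ with norm $\grandO(\delta)$, uniformly in $\hbar$; similarly $\Op(g_0)$ and $E_\hbar$ are bounded uniformly in $\hbar$.

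It remains to control each term by $\Lh^0$. For the main terms, I would use that $\Ih^{(j)} \leq C \Lh^0$ as operators: indeed $\sum_j \hat\beta_j(w)\vert z_j\vert^2 \geq (\min_k \inf_w \hat\beta_k)\sum_j \vert z_j\vert^2$, so $\Lh^0 \geq c_0 \sum_j \Ih^{(j)}$ (up to lower order $\grandO(\hbar^2)$ from Weyl calculus), hence $\Ih^{(j)} \leq c_0^{-1}\Lh^0 + \grandO(\hbar^2)$. Combining, for $\psi \in \mathcal S(\R^d)$,
\begin{align*}
\vert \langle \Op f^\star\, \psi,\psi\rangle\vert \leq \sum_j \Vert \Op(g_j)\Vert\, \langle \Ih^{(j)}\psi,\psi\rangle + C\hbar\Vert\psi\Vert^2 + C\hbar^2\Vert\psi\Vert^2 \leq C\delta\, \langle\Lh^0\psi,\psi\rangle + C\hbar\Vert\psi\Vert^2.
\end{align*}
Finally, using $\Lh^0 \geq c_1\hbar$, the term $C\hbar\Vert\psi\Vert^2$ is bounded by $(C/c_1)\langle\Lh^0\psi,\psi\rangle$; but more carefully, I want this contribution to also be $\leq \zeta\langle\Lh^0\psi,\psi\rangle$, which is not automatic from $\delta$ alone. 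Here is the subtlety and, I expect, the main obstacle: the $\hbar$-dependence of $f^\star$ contributes a genuine $\grandO(\hbar)$ times identity, and $\Lh^0 \sim \hbar$, so this term is $\grandO(1)$ relative to $\Lh^0$ and does \emph{not} vanish with $\delta$. The resolution is that $f^\star$ has \emph{no} term that is purely a function of $(w,\hbar)$ with a nonzero $\hbar^1$ coefficient independent of $I$: the Taylor series of $f^\star$ has valuation $\geq 4$ with $\deg\hbar = 2$, so the lowest $\hbar$-only term is $\hbar^2 c(w)$, i.e. $g_0$ above actually satisfies $g_0 = \hbar\, \tilde g_0 + \sum_j I_j h_j$ with everything $\grandO(\delta)$. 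Thus the genuine remainder after factoring is $\grandO(\hbar^2) + \grandO(\delta)\sum_j \Ih^{(j)}$, and $\grandO(\hbar^2) = \grandO(\hbar)\cdot\grandO(\hbar) \leq \grandO(\hbar)\langle\Lh^0\psi,\psi\rangle/(c_1\Vert\psi\Vert^2)\cdot\Vert\psi\Vert^2$ — wait, more simply, $\hbar^2\Vert\psi\Vert^2 \leq (\hbar/c_1)\langle\Lh^0\psi,\psi\rangle$, which for $\hbar$ small is $\leq (\zeta/2)\langle\Lh^0\psi,\psi\rangle$. Choosing first $\delta$ small so that $C\delta \leq \zeta/2$ in the $\Ih^{(j)}$-terms, and then $\hbar_0$ small, gives the two-sided bound $(1-\zeta)\Lh^0 \leq \BNh \leq (1+\zeta)\Lh^0$, completing the proof.
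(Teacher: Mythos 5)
Your proof is correct and reaches the same conclusion, but via a noticeably different route from the paper. The paper's argument is a one-shot pointwise bound on the symbol over the joint spectrum of the $\Ih^{(j)}$: since on the spectrum $\lambda_j=\hbar(2n_j+1)\geq\hbar$, the $\hbar$-only terms of $f^\star$ are automatically absorbed, giving $\vert f^\star(w,\lambda,\hbar)\vert\leq CK\Vert\lambda\Vert\leq\tilde CK\sum_j\hat\beta_j(w)\lambda_j$, which is precisely the symbol of $\tilde CK\,\Lh^0$ restricted to that spectral fiber; the operator bound then follows in one step from functional calculus in the $z$-variables together with G\aa rding in $w$. You instead factor the symbol algebraically as $f^\star=\sum_j I_j\,\tilde g_j(w,I,\hbar)+\hbar^2\tilde g_0(w,I,\hbar)$, quantize term by term, and rely on Calder\'on--Vaillancourt plus the operator inequalities $\Ih^{(j)}\leq C\Lh^0$ and $\hbar\Vert\psi\Vert^2\leq C\langle\Lh^0\psi,\psi\rangle$. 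Both approaches exploit exactly the same three ingredients (compact $(I,\hbar)$-support, vanishing of $f^\star$ to order $\geq 2$ at $(I,\hbar)=0$, and $\hat\beta_j$ bounded below); the paper gets the coupling to $\Lh^0$ for free by building $\hat\beta_j$ into the symbol estimate, whereas you reconstruct it through intermediate operator inequalities. Your crucial observation that the purely $(w,\hbar)$-dependent part of $f^\star$ starts at $\hbar^2$ --- so that the lossy term is $\grandO(\hbar^2)$ rather than $\grandO(\hbar)$ and can be killed by taking $\hbar_0$ small --- is the same structural fact the paper exploits implicitly via $\hbar\leq\lambda_j$. One small caution: when invoking Calder\'on--Vaillancourt to get $\Vert\Op(g_j)\Vert=\grandO(\delta)$, you should make clear that only $w$-derivatives enter (the $I_j$'s being spectral parameters, not quantized); the $(I,\hbar)$-derivatives of $g_j$ are only $\grandO(1)$, so the claim would be wrong for a genuine $(w,z)$-Weyl quantization of $g_j(w,\vert z\vert^2,\hbar)$ without further care. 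Once one reads $\Op f^\star(w,\Ih^{(1)},\dots,\Ih^{(d/2)},\hbar)$ as the $w$-quantization of an operator-valued symbol built by functional calculus in the commuting family $\Ih^{(j)}$, the reordering term $E_\hbar$ you introduce actually vanishes, which tightens your argument.
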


\begin{proof}
For a given $K >0$, we can take a cutoff function $\chi$ supported in $\lbrace \lambda \in \R^{d/2} : \Vert \lambda \Vert \leq K \rbrace$, and change $f^{\star}$ into $\chi f^{\star}$. Thus, for $\lambda_j \in \spectre (\Ih^{(j)})$,
\begin{align*}
\vert \chi f^{\star}(w,\lambda_1,..., \lambda_{d/2},\hbar) \vert & \leq C K \Vert \lambda \Vert\\
& \leq CK \sum_j \frac{1}{\min \bbeta_j} \bbeta_j(w) \lambda_j  \\
& \leq \tilde{C} K \sum_{j} \bbeta_j(w) \lambda_j.
\end{align*}
Hence, using functional calculus and the G$\overset{\circ}{\text{a}}$rding inequality, we deduce that
\begin{align*}
\vert \langle \Op f^*(w,\Ih^{(1)},..., \Ih^{(d/2)},\hbar) \psi , \psi \rangle \vert &\leq \tilde{C} K \langle \Lh^0 \psi, \psi \rangle + c\hbar \Vert \psi \Vert^2\\
&\leq \zeta \langle \Lh^0 \psi,\psi \rangle,
\end{align*}
for $K$ and $\hbar$ small enough.
\end{proof} 

\section{\textbf{Spectral reduction of} $\BNh$}

In this section, we prove an expansion of the first eigenvalues of $\BNh$ in powers of $\hbar^{1/2}$. In order to prove Theorem \ref{expansionOftheEigenvalues}, it will only remain to compare the spectra of $\BNh$ and $\Lh$. This will be done in the next sections.\\

Let $1 \leq j \leq d/2$. For $n_j \geq 0$, we denote $h_{n_j} : \R \rightarrow \R$ the $n_j$-th Hermite function of the variable $x_j$. In particular, for every $1 \leq j \leq d/2$ we have:
\begin{align}\label{eigenfunctionsOfIh}
\Ih^{(j)}h_{n_j}(x_j) = \hbar (2n_j+1) h_{n_j}(x_j).
\end{align}
Moreover, $(h_{n_j})_{n_j \geq 0}$ is a Hilbertian basis of $\Ld(\R_{x_j})$:
$$\Ld(\R_{x_j}) = \bigoplus_{n_j \geq 0} \langle h_{n_j} \rangle.$$
On $\R^{d/2}_x$, we define the functions $\mathbf{h}_{n}$ for any $n=(n_1, ..., n_{d/2}) \in \N^{d/2}$ by
$$\mathbf{h}_n(x) = h_{n_1} \otimes ... \otimes h_{n_{d/2}} (x) = h_{n_1}(x_1) ... h_{n_{d/2}}(x_{d/2}).$$
We have the following space decomposition:
$$\Ld(\R^{d/2}_x) = \bigoplus_{n \in \N^{d/2}} \langle \mathbf{h}_n \rangle.$$
In particular, we have:
\begin{align}\label{decompositionOfL2space}
\Ld(\R^d_{x,y}) =  \bigoplus_{n \in \N^{d/2}} \left( \Ld (\R^{d/2}_y)  \otimes \langle \mathbf{h}_n \rangle \right).
\end{align}

Since $\BNh$ commutes with the harmonic oscillators $\Ih^{(j)}$ $(1 \leq j \leq d/2)$, it is reduced in the decomposition (\ref{decompositionOfL2space}). More precisely,

\begin{lemma}\label{reductionBNHdiagonale}
For $n=(n_1,...,n_{d/2}) \in \N^{d/2}$, there exists a classical pseudodifferential operator $\BNh^{(n)}$ acting on $\Ld (\R_y^{d/2})$ such that:
$$\BNh(u \otimes h_{n_1} \otimes ... \otimes h_{n_{d/2}} ) = \BNh^{(n)}(u) \otimes h_{n_1} \otimes ... \otimes h_{n_{d/2}}, \quad \forall u \in \mathcal{S}(\R^{d/2}_y).$$
Its symbol is:
$$F^{(n)}(w) = \hbar \sum_{j=1}^{d/2} \bbeta_j(w)(2n_j+1) + f^{\star}(w,\hbar(2n+1),\hbar),$$
and we have:
$$\spectre (\BNh) = \bigcup_n \spectre (\BNh^{(n)}).$$
Moreover, the multiplicity of $\lambda$ as eigenvalue of $\BNh$ is the sum over $n$ of the multiplicities of $\lambda$ as eigenvalue of $\BNh^{(n)}$.
\end{lemma}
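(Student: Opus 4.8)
The plan is to exploit directly the fact that $\BNh = \Lh^0 + \Op f^{\star}(w,\Ih^{(1)},\dots,\Ih^{(d/2)},\hbar)$ is built entirely out of the commuting harmonic oscillators $\Ih^{(j)} = -\hbar^2 \partial_{x_j}^2 + x_j^2$ (acting in the $x$-variables) and a pseudodifferential operator $\Op(\bbeta_j(w))$ acting in the $w=(y,\eta)$ variables, so that each summand is of the form $\Op(a(w)) \otimes \phi(\Ih^{(1)},\dots,\Ih^{(d/2)})$. First I would observe that since the Hermite functions diagonalize each $\Ih^{(j)}$ with $\Ih^{(j)} h_{n_j} = \hbar(2n_j+1)h_{n_j}$, the functional calculus gives, for any smooth function $\phi$ of $d/2$ variables,
$$\phi\bigl(\Ih^{(1)},\dots,\Ih^{(d/2)}\bigr)\bigl(u\otimes \mathbf{h}_n\bigr) = \phi\bigl(\hbar(2n_1+1),\dots,\hbar(2n_{d/2}+1)\bigr)\, u\otimes \mathbf{h}_n,$$
for $u\in\mathcal{S}(\R^{d/2}_y)$. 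Applying this with $\phi = f^{\star}(w,\cdot,\hbar)$ — where $w$ is frozen and then requantized in the $y$-variables, which is legitimate because the $\Ih^{(j)}$ commute with everything in $w$ — together with the same identity for $\Lh^0 = \Op\bigl(\sum_j \bbeta_j(w)\vert z_j\vert^2\bigr) = \sum_j \Op(\bbeta_j(w))\,\Ih^{(j)}$, yields
$$\BNh\bigl(u\otimes \mathbf{h}_n\bigr) = \Bigl[\hbar\sum_{j=1}^{d/2}\Op(\bbeta_j(w))(2n_j+1)\,u + \Op\bigl(f^{\star}(w,\hbar(2n+1),\hbar)\bigr)u\Bigr]\otimes \mathbf{h}_n,$$
which is exactly $\BNh^{(n)}(u)\otimes \mathbf{h}_n$ with the claimed symbol $F^{(n)}(w)$. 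The fact that $\BNh^{(n)}$ is a classical pseudodifferential operator in the $y$-variables follows because $w\mapsto f^{\star}(w,\hbar(2n+1),\hbar)$ is smooth with the appropriate symbol bounds (it has compact support by Theorem \ref{BNh}(ii)).

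Next I would assemble the spectral statement from the orthogonal decomposition (\ref{decompositionOfL2space}), $\Ld(\R^d_{x,y}) = \bigoplus_{n\in\N^{d/2}}\bigl(\Ld(\R^{d/2}_y)\otimes\langle \mathbf{h}_n\rangle\bigr)$. Since $\BNh$ leaves each summand invariant and acts there as $\BNh^{(n)}\otimes \mathrm{Id}$, it is an orthogonal direct sum $\BNh = \bigoplus_n \BNh^{(n)}$ (in the sense of self-adjoint operators with appropriate domains). A standard fact about orthogonal direct sums of self-adjoint operators then gives $\spectre(\BNh) = \overline{\bigcup_n \spectre(\BNh^{(n)})}$; since we only care about the discrete spectrum below $b_1\hbar$ and the $\bbeta_j$ are bounded below by a positive constant, only finitely many $n$ contribute in that range, so no closure is needed there and the union is as stated. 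For the multiplicity claim: an eigenvalue $\lambda$ of $\BNh$ has eigenspace $\bigoplus_n \bigl(\ker(\BNh^{(n)}-\lambda)\otimes\langle\mathbf{h}_n\rangle\bigr)$, whose dimension is $\sum_n \dim\ker(\BNh^{(n)}-\lambda)$, i.e. the sum over $n$ of the multiplicities of $\lambda$ for $\BNh^{(n)}$.

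The routine points — that $\Op(f^{\star})$ commutes with the $\Ih^{(j)}$, that the functional calculus identity above is valid, and that direct-sum decompositions behave well spectrally — are all standard; the only mild subtlety, and hence the main thing to be careful about, is making sense of $\Op f^{\star}(w,\Ih^{(1)},\dots,\Ih^{(d/2)},\hbar)$ as a single operator and justifying that one may "plug in" the eigenvalues $\hbar(2n_j+1)$ before requantizing in $w$. This is handled by noting that $f^{\star}$ is a convergent (indeed polynomial-in-$I$, by the compact-support construction) expression in commuting operators, so the quantization in $(y,\eta)$ and the Hermite-mode projection in $x$ genuinely commute; there is no ordering ambiguity precisely because the $\Ih^{(j)}$ pairwise commute and commute with all functions of $w$.
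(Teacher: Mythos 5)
Your proof is correct and follows essentially the same route as the paper, whose entire argument for this lemma is the single sentence "This follows directly from (\ref{eigenfunctionsOfIh}) and (\ref{FormuleFormeNormale})" — you have simply unpacked that remark into the explicit functional-calculus computation and the orthogonal direct-sum argument. The one point you flag (that a priori $\spectre(\BNh)$ is only the \emph{closure} of $\bigcup_n\spectre(\BNh^{(n)})$) is a genuine subtlety the paper's statement glosses over, and your resolution — that in the energy window of interest only finitely many $n$ contribute, by the subsequent lemma giving $\spectre(\BNh^{(n)})\subset[\hbar(b_0+c\vert n\vert),\infty)$ — is the right way to handle it.
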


This follows directly from (\ref{eigenfunctionsOfIh}) and (\ref{FormuleFormeNormale}). Moreover, we can prove the following more precise inclusions of the spectra.

\begin{lemma}
Let $b_1 \in (b_0, \tilde{b}_1)$. There exist $\hbar_0, n_{max}, c>0$ such that, for any $\hbar \in (0, \hbar_0)$:
\begin{align}\label{decompositionSpectreBNh}
\spectre ( \BNh ) \cap (- \infty, b_1 \hbar ] \subset \bigcup_{0 \leq \vert n \vert \leq n_{max}} \spectre ( \BNh^{(n)} ),
\end{align}
and for any $n \in \N^{d/2}$ with $1 \leq \vert n \vert \leq n_{max}$:
\begin{align}\label{inclusionSpectreBNhn}
\spectre (\BNh^{(n)}) \subset [\hbar (b_0 + c\vert n \vert),+\infty).
\end{align}
\end{lemma}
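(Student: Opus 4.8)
The plan is to analyze the symbols $F^{(n)}(w)$ from Lemma \ref{reductionBNHdiagonale} and exploit the non-degenerate minimum of $b$. First I would recall that, by construction in Theorem \ref{BNh}, the function $f^{\star}(w,I,\hbar)$ has a small compact $(I,\hbar)$-support and its Taylor series starts at order $k\geq 2$ in the degree counting $(I,\hbar)$; hence $|f^{\star}(w,\hbar(2n+1),\hbar)| \leq C\hbar^2(1+|n|)^2$ uniformly on the (reduced) support, so this term is negligible compared to the leading part $\hbar\sum_j \bbeta_j(w)(2n_j+1)$ whenever $\hbar$ is small. The leading part is bounded below by $\hbar(2|n|+d/2)\min_j\inf_w\bbeta_j(w)$; but more sharply, since $\sum_j\bbeta_j(w) = b\circ\varphi^{-1}(w)$ on $V$ and we extended the $\bbeta_j$ so that $\sum_j\bbeta_j\geq \tilde b_1$ on $V^c$ (see (\ref{extentionH0})), we have $\sum_j\bbeta_j(2n_j+1)\geq \sum_j\bbeta_j \geq \min(b_0,\tilde b_1) = b_0$ everywhere, with a strict gain $\geq c|n|$ coming from the terms with $n_j\geq 1$ because each $\bbeta_j$ is bounded below by a fixed positive constant on $V$ and by $\tilde b_1$ outside.

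For (\ref{inclusionSpectreBNhn}): fix $n$ with $|n|\geq 1$. On $V$ we get $F^{(n)}(w)\geq \hbar\big(b(\varphi^{-1}(w)) + 2c_0|n|\big) - C\hbar^2(1+|n|)^2 \geq \hbar(b_0 + 2c_0|n|) - C\hbar^2(1+|n|)^2$, while on $V^c$, $F^{(n)}(w)\geq \hbar\tilde b_1 - C\hbar^2(1+|n|)^2$, and $\tilde b_1 > b_0$. Provided $|n|\leq n_{max}$ and $\hbar$ is small enough (depending on $n_{max}$), the $\grandO(\hbar^2(1+|n|)^2)$ error is absorbed into half of $c_0\hbar|n|$ plus a fixed slack below $\tilde b_1$, so $F^{(n)}(w)\geq \hbar(b_0 + c|n|)$ pointwise for some $c>0$. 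Since $\BNh^{(n)}$ is a pseudodifferential operator whose symbol is bounded below by $\hbar(b_0+c|n|)$ and whose subprincipal symbol vanishes (or contributes only $\grandO(\hbar^2)$, again absorbable), the sharp Gårding inequality gives $\langle \BNh^{(n)}u,u\rangle \geq (\hbar(b_0+c|n|) - C'\hbar^{3/2})\|u\|^2$; after slightly shrinking $c$ this yields $\spectre(\BNh^{(n)})\subset[\hbar(b_0+c|n|),+\infty)$.

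For (\ref{decompositionSpectreBNh}): by Lemma \ref{reductionBNHdiagonale}, $\spectre(\BNh) = \bigcup_{n\in\N^{d/2}}\spectre(\BNh^{(n)})$, so it suffices to show $\spectre(\BNh^{(n)})\cap(-\infty,b_1\hbar] = \emptyset$ as soon as $|n|$ is large. Using the same pointwise lower bound $F^{(n)}(w)\geq \hbar(b_0 + c_0|n|) - C\hbar^2(1+|n|)^2$ on $V$ and $F^{(n)}(w)\geq \hbar\tilde b_1 - C\hbar^2(1+|n|)^2$ on $V^c$, together with the Gårding inequality as above, we get $\spectre(\BNh^{(n)})\subset[\hbar(b_0 + c_0|n|) - C''\hbar^{3/2}(1+|n|)^2 - C''\hbar^{3/2},+\infty)$; but here I must be careful that the quadratic-in-$|n|$ error does not swamp the linear gain. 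This is the one delicate point: the compact-support construction of $f^{\star}$ caps $|n|$ for which the $f^{\star}$ term is even present (outside its $I$-support the term is zero), so in fact $f^{\star}(w,\hbar(2n+1),\hbar) = 0$ once $\hbar|n| \gtrsim K$, i.e. once $|n|\gtrsim K/\hbar$; in the complementary regime $|n|\lesssim K/\hbar$ we have $\hbar^2(1+|n|)^2\lesssim K\hbar(1+|n|)$ which is absorbed by choosing $K$ small (which we may, by Proposition \ref{perturbationH0}'s mechanism). Thus in all regimes $F^{(n)}(w)\geq \hbar(b_0 + \tfrac{c_0}{2}|n|)$ for $\hbar$ small, hence $\langle\BNh^{(n)}u,u\rangle\geq \hbar(b_0+\tfrac{c_0}{2}|n| - C'''\hbar^{1/2})\|u\|^2 > b_1\hbar\|u\|^2$ once $|n| > n_{max} := \lceil 2(b_1-b_0)/c_0\rceil + 1$ and $\hbar$ is small. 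The main obstacle is precisely the bookkeeping in this last step — controlling the $f^{\star}$ contribution uniformly in $n$ — which is handled by the freedom, already used in Proposition \ref{perturbationH0}, to shrink the support of $f^{\star}$.
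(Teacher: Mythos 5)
Your proof is essentially correct but takes a genuinely different route from the paper's. You estimate the symbol $F^{(n)}(w) = \hbar\sum_j\hat{\beta}_j(w)(2n_j+1)+f^{\star}(w,\hbar(2n+1),\hbar)$ directly, controlling the $f^{\star}$ contribution through its vanishing Taylor jet and small compact support, and then apply G\aa rding to each $\BNh^{(n)}$; the paper instead feeds the previously established operator bound of Proposition~\ref{perturbationH0}, namely $(1-\zeta)\Lh^0\leq\BNh\leq(1+\zeta)\Lh^0$ in the quadratic-form sense, into $\psi = u\otimes\mathbf{h}_n$ and then needs only two $n$-\emph{independent} G\aa rding inequalities, one for $\Op(\hat\beta_j)\geq c$ and one for $\Op(\hat b)\geq b_0 - \tilde c\hbar$, recombined using $\sum_j\hbar(2n_j+1)\langle\Op(\hat\beta_j)u,u\rangle = 2\hbar\sum_j n_j\langle\Op(\hat\beta_j)u,u\rangle + \hbar\langle\Op(\hat b)u,u\rangle$. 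What the paper's route buys is that the delicate bookkeeping you correctly flag as the crux --- controlling $f^{\star}(w,\hbar(2n+1),\hbar)$ uniformly in $n$, plus the fact that the G\aa rding constant for the symbol $F^{(n)}/\hbar$ itself grows like $1+|n|$ --- disappears entirely, because Proposition~\ref{perturbationH0} has already absorbed $f^{\star}$ as a $\zeta$-relative perturbation of $\Lh^0$ once and for all. Your approach is more elementary in the sense of not invoking Proposition~\ref{perturbationH0}, but it then forces you to redo, scattered across several regimes in $|n|$, the same shrink-the-support argument that Proposition~\ref{perturbationH0} packaged; and to make the G\aa rding step airtight you should either rescale the symbol by $1/(1+|n|)$ before applying G\aa rding (so that its seminorms, hence the error constant, are $n$-uniform) or note explicitly that the $O(\hbar(1+|n|))$ error is still dominated by the $c|n|$ gain for $\hbar$ small uniformly in $n$. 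Both routes land in the same place; the paper's is shorter because the heavy lifting was deliberately done upstream.
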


\begin{proof}
Remember that the functions $\hat{\beta}_j$ are bounded from below by a positive constant. Thus, the G$\overset{\circ}{\text{a}}$rding inequality implies that there are $\hbar_0,c>0$ such that, for every $\hbar \in (0,\hbar_0)$,
\begin{align}\label{GardingOnBetaj}
\langle \Op ( \hat{\beta}_j ) u , u \rangle \geq c \Vert u \Vert^{2}, \quad \forall u \in \Ld(\R^{d/2}_y).
\end{align}
For any $n \in \N^{d/2}$, we have:
\begin{align*}
\langle \BNh^{(n)} u , u \rangle &= \langle \BNh ( u \otimes \mathbf{h}_n ), u \otimes \mathbf{h}_n \rangle \\ &\geq (1-\zeta) \langle \Lh^{0} ( u \otimes \mathbf{h}_n), u \otimes \mathbf{h}_n \rangle \quad \text{by Proposition } \ref{perturbationH0} \\ &= (1- \zeta) \sum_{j=1}^{d/2} \hbar(2n_j +1 ) \langle \Op ( \hat{\beta}_j ) u , u \rangle
\end{align*}
because $\Lh^0 = \sum_j \Op( \bbeta_j) \Ih^{(j)}$. Thus using (\ref{GardingOnBetaj}) and the G$\overset{\circ}{\text{a}}$rding inequality,
\begin{align*}
\langle \BNh^{(n)} u , u \rangle 
&\geq \hbar (1-\zeta) (2c \vert n \vert \Vert u \Vert^2 + \langle \Op( \hat{b} ) u,u \rangle )\\
&\geq \hbar (1-\zeta)  (2c \vert n \vert +b_0 - \tilde{c}\hbar) \Vert u \Vert^{2}.
\end{align*}
This proves (\ref{inclusionSpectreBNhn}) for a new $c>0$. Moreover, if you take any eigenpair $(\lambda,\psi)$ of $\BNh$ with $\lambda \leq b_1 \hbar$, it is an eigenpair of some $\BNh^{(n)}$, with $\psi = u \otimes \mathbf{h}_n$, and:
\begin{align*}
\hbar (1-\zeta) (2 c \vert n \vert +b_0 - \tilde{c} \hbar) \Vert u \Vert^{2} \leq \langle \BNh ^{(n)} u , u \rangle = \langle \BNh \psi , \psi \rangle \leq b_1 \hbar \Vert \psi \Vert^{2}.
\end{align*}
Thus, there is a $n_{max}>0$ independent of $\hbar, \lambda, \psi$ such that
$$\vert n \vert \leq n_{max}.$$
We deduce (\ref{decompositionSpectreBNh}).
\end{proof}

Using the previous Lemma and the well-known expansion of the first eigenvalues of $\Op( \hat{b})$, we deduce an expansion of the first eigenvalues of $\BNh$.

\begin{theorem} \label{wellexpansionBNh} Let $\varepsilon >0$ and $N \geq 1$. There exist $\hbar_0 >0$ and $c_0 >0$ such that, for $\hbar \in (0,\hbar_0]$, the $N$ first eigenvalues of $\BNh$ : $(\lambda_j(\hbar))_{1 \leq j \leq N}$ admit an expansion in powers of $\hbar^{1/2}$ of the form:
$$\lambda_j(\hbar) = \hbar b_0 + \hbar^2(E_j+c_0) + \hbar^{5/2} c_{j,5} + \hbar^{ 3 } c_{j,6} + ... ,$$
where $\hbar E_j$ is the $j$-th eigenvalue of the $d/2$-dimensional harmonic oscillator associated to the Hessian of $\hat{b}$ at $0$, counted with multiplicity.
\end{theorem}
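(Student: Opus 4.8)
The plan is to reduce the problem to a finite union of semiclassical pseudodifferential operators in $d/2$ variables, each a perturbation of a harmonic oscillator, and then apply the classical theory of eigenvalue expansions for such operators. By Lemma \ref{reductionBNHdiagonale} and the preceding Lemma, the part of the spectrum of $\BNh$ below $b_1\hbar$ (for suitable $b_1 \in (b_0,\tilde b_1)$) is contained in $\bigcup_{0 \le |n| \le n_{\max}} \spectre(\BNh^{(n)})$, a finite union; and for $|n| \ge 1$ the operator $\BNh^{(n)}$ has spectrum above $\hbar(b_0 + c|n|)$, which for $N$ fixed eventually exceeds the $N$-th eigenvalue. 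So it suffices to analyse $\BNh^{(0)}$, whose symbol is $F^{(0)}(w) = \hbar \sum_j \bbeta_j(w) + f^\star(w,\hbar,\hbar)$. First I would note that $\sum_j \bbeta_j(w) = b\circ\varphi^{-1}(w) =: \hat b(w)$ attains its nondegenerate minimum $b_0$ at $w=0$ (Assumption \ref{dismagwell} transported by $\varphi$, using $\varphi(q_0)=0$), and that $f^\star(w,\hbar,\hbar) = \grandO(\hbar^2)$ by construction ($f^\star$ starts at degree $\ge 2$, i.e.\ the $(I,\hbar)$-Taylor series has no constant or linear term, so plugging $I = \hbar$ gives $\grandO(\hbar^2)$), with a full asymptotic expansion $f^\star(w,\hbar,\hbar) \sim \sum_{k\ge 2} \hbar^k g_k(w)$.

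Next I would perform the standard harmonic-approximation rescaling. Write $w = w_0 + \hbar^{1/2} v$ with $w_0=0$, so that $\BNh^{(0)} = \Op_\hbar^w F^{(0)}$ becomes, after conjugation by the unitary dilation $v \mapsto \hbar^{-1/4}\cdot$, an operator of the form
\begin{align*}
\hbar b_0 + \hbar^2\left( \mathfrak{h}_0 + c_0 \right) + \hbar^{5/2} \opQ_1 + \hbar^3 \opQ_2 + \cdots,
\end{align*}
where $\mathfrak{h}_0 = \Op_1^w\big( \tfrac12 \Hess_0 \hat b\, (v,v) + |\text{(momentum)}|^2\text{-part}\big)$ — more precisely the $d/2$-dimensional harmonic oscillator $\Op(\Hess_0(b\circ\varphi^{-1}))$ of the statement, $c_0 = g_2(0)$ is the constant term coming from $f^\star$, and the $\opQ_k$ are (unbounded) polynomial-coefficient operators of controlled growth coming from the higher Taylor coefficients of $\hat b$ at $0$ and of the $g_k$. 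The operator $\mathfrak h_0$ has discrete spectrum $\{\hbar E_j\}$ — here I would be slightly careful about the $\hbar$ vs.\ rescaled normalisation, matching the statement's convention that $\hbar E_j$ is the $j$-th eigenvalue of $\Op(\Hess_0(b\circ\varphi^{-1}))$.

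The expansion of the first $N$ eigenvalues then follows from the perturbation theory of a semiclassical operator of the form $P_\hbar = P_0 + \hbar^{1/2} P_1 + \hbar P_2 + \cdots$ where $P_0$ has simple (or finitely degenerate) isolated eigenvalues at the bottom of its spectrum: a Feshbach/Grushin or a direct Rayleigh--Schrödinger argument gives $\lambda_j(\hbar) = \mu_j^{(0)} + \hbar^{1/2}\mu_j^{(1)} + \cdots$, and one truncates at order $\hbar^{(r-1)/2}$ with remainder $\grandO(\hbar^{r/2-\varepsilon})$ — although in \emph{this} theorem (as opposed to Theorem \ref{expansionOftheEigenvalues}) the remainder has not yet absorbed the $\Rh$ term, so the expansion can in principle be pushed to all orders, which is why the statement writes "$+\dots$" rather than a remainder. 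I would justify the expansion either by quoting a standard result (e.g.\ the Born--Oppenheimer / harmonic-well expansion as in \cite{livre}, or Sjöstrand-type analytic perturbation theory) or by constructing quasimodes order by order and using the spectral gap of $\mathfrak h_0$ together with the spectral theorem to turn quasimodes into genuine eigenvalue asymptotics. The main obstacle I anticipate is bookkeeping: keeping the unbounded perturbations $\opQ_k$ under control (they are relatively bounded w.r.t.\ $\mathfrak h_0$ with the right relative bound, since they are polynomials in $v$ and its momenta of degree matched to the $\hbar$-power) and verifying that the compact-support truncation of $f^\star$ (Proposition \ref{perturbationH0}) does not affect the low-lying spectrum, as the eigenfunctions of $\mathfrak h_0$ are Schwartz and concentrate at $w=0$ at scale $\hbar^{1/2}$. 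Once that localisation is in hand, the cutoff is invisible modulo $\grandO(\hbar^\infty)$ and the expansion reduces to the model harmonic oscillator plus polynomial corrections.
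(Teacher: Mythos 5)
Your proposal is correct and follows essentially the same route as the paper: reduce to $\BNh^{(0)}$ using the preceding lemma (all other blocks $\BNh^{(n)}$, $|n|\ge 1$, have spectrum above $\hbar(b_0+c)$ so they are irrelevant for fixed $N$ and small $\hbar$), observe that the symbol is $\hbar\hat b(w)+f^\star(w,\hbar,\dots,\hbar)=\hbar(\hat b(w)+\hbar c_0+\grandO(\hbar^2))$ with $\hat b$ having a nondegenerate minimum, and then invoke the Helffer--Sj\"ostrand harmonic-well expansion. The only cosmetic difference is that the paper delegates the rescaling/quasimode step to citations (\cite{HS1}, \cite{Sjo92}, \cite{San}) after sketching the linear symplectic reduction of $\Hess_0\hat b$, whereas you spell out the $w=\hbar^{1/2}v$ dilation and Rayleigh--Schr\"odinger bookkeeping — but the content is the same, and your extra remarks (the compact cutoff of $f^\star$ being invisible modulo $\grandO(\hbar^\infty)$ on eigenfunctions concentrating at scale $\hbar^{1/2}$, the relative boundedness of the polynomial perturbations) are legitimate points that the paper leaves implicit by reference.
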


\begin{proof}
The smallest eigenvalues of $\BNh$ are those of $\BNh^{(0)}$, which has the symbol
\begin{align*}
\hbar \hat{b}(w) + f^{\star}(w,\hbar, ..., \hbar ) &= \hbar (\hat{b}(w) + \hbar c_0 + \grandO(\hbar^2)).\\
\end{align*}
The first eigenvalues of a semiclassical pseudodifferential operator with principal symbol $\hat{b}$ (which admits a unique and non-degenerate minimum) have an expansion of the form:
\begin{align}\label{ExpansionForBh}
 \mu_j(\hbar) = b_0 + \hbar E_{j} + \hbar^{3/2} \sum_{m \geq 0} a_{j,m} \hbar^{m/2},
 \end{align}
where $\hbar E_j$ is the $j$-th eigenvalue of the $d/2$-dimensional harmonic oscillator associated to the Hessian of $\hat{b}$ at the minimum. Let us recall the idea of the proof of this result. Since the minimum of $\hat{b}$ is non degenerate, we can write
$$\hat{b}(w) = b_0 + \frac{1}{2}\Hess_0 \hat{b} (w,w) + \grandO(\vert w \vert^3).$$
A linear symplectic change of coordinates changes $\Hess_0 \hat{b}$ into
$$\sum_{j=1}^{d/2} \nu_j ( y_j^2 + \eta_j^2),$$
for some positive numbers $(\nu_j )_{1 \leq j \leq d/2}$. In these coordinates the symbol becomes
$$\hat{b}(y,\eta) = b_0 + \sum_{j=1}^{d/2}\nu_j( y_j^2 + \eta_j^2) + \grandO( \vert w \vert^3) + \grandO(\hbar),$$
and Helffer-Sjöstrand proved in \cite{HS1} that the first eigenvalues of a pseudo-differential operator with such a symbol admits an expansion in powers of $\hbar^{1/2}$. Sjöstrand \cite{Sjo92} recovered this result using a Birkhoff normal form in the case where the coefficients $(\nu_j)_j$ are non-resonant. Charles and Vu Ngoc also tackled the resonant case in \cite{San}.
\end{proof}

\section{\textbf{Microlocalization results}}
\label{SectionMicrolocalisation}

In section \ref{SectionFormeNormale}, we have proved Theorem \ref{ThmFormeNormale}: We have constructed a normal form, which is only valid on a neighborhood $U$ of $\Sigma = H^{-1}(0)$ since the rest $R_{\hbar}$ can be large outside this neighborhood. Hence, we now prove that the eigenfunctions of $\Lh$ and $\BNh$ are microlocalized on a neighborhood of $\Sigma$.

\subsection{Microlocalization of the eigenfunctions of $\Lh$}

We recall that $$K = \lbrace b(q) \leq \tilde{b}_1 \rbrace \subset \Omega.$$ For $\varepsilon >0$, we denote
\begin{align}\label{DefKEpsilon}
K_{\varepsilon}= \lbrace q : \dd (q,K) \leq \varepsilon \rbrace.
\end{align}
For $\varepsilon >0$ small enough, $K_{\varepsilon} \subset \Omega$.

The following Theorem states the well-known Agmon estimates (see Agmon's paper \cite{Agmon}), which gives exponential decay of the eigenfunctions of the magnetic Laplacian $\Lh$ outside the minimum $q_0$ of the magnetic intensity $b$. In particular, these eigenfunctions are localized in $\Omega$.

\begin{theorem}[Agmon estimates]\label{Agmon} Let $\alpha \in (0,1/2)$ and $b_0 < b_1<\tilde{b}_1$. There exist $C, \hbar_0 >0$ such that for all $\hbar \in (0,\hbar_0]$ and for all eigenpair $(\lambda, \psi)$ of $\Lh$ with $\lambda \leq \hbar b_1 $, we have:
$$\int_M \vert e^{d(q,K)\hbar^{-\alpha}} \psi \vert ^2 \dd q \leq C \Vert \psi \Vert^2.$$ In particular, if $\chi_0 : M \rightarrow [0,1]$ is a smooth function being $1$ on $K_{\varepsilon}$,
$$ \psi = \chi_0 \psi + \grandO(\hbar^{\infty}) \quad \text{in } \Ld(M).$$
\end{theorem}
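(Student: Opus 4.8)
The plan is to run the standard magnetic Agmon scheme: conjugate $\Lh$ by an exponential weight and combine the resulting localization identity with the lower bound of Assumption~\ref{InequalityIntensity}. Fix $\alpha\in(0,1/2)$ and $b_0<b_1<\tilde{b}_1$. For $N>0$ set $\Phi_N=\min(d(q,K),N)\,\hbar^{-\alpha}$; this is a bounded Lipschitz function with $|d\Phi_N|\le\hbar^{-\alpha}$ almost everywhere and $\Phi_N\equiv 0$ on $K$, so $e^{\Phi_N}$ is a bounded Lipschitz multiplier preserving $D(q_{\hbar})$ and the Dirichlet condition. Since $\Phi_N$ is real, conjugating $i\hbar\,\dd+A$ by $e^{\Phi_N}$ and expanding the quadratic form gives the magnetic Agmon identity (a standard lemma, see \cite{HeMo96}): for any eigenpair $(\lambda,\psi)$ of $\Lh$,
\begin{align*}
q_{\hbar}\!\left(e^{\Phi_N}\psi\right)-\hbar^2\left\Vert |d\Phi_N|\,e^{\Phi_N}\psi\right\Vert^2=\mathrm{Re}\,\langle\Lh\psi,\,e^{2\Phi_N}\psi\rangle=\lambda\left\Vert e^{\Phi_N}\psi\right\Vert^2 .
\end{align*}

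Next I would feed $e^{\Phi_N}\psi\in D(q_{\hbar})$ into Assumption~\ref{InequalityIntensity}. Writing $\tilde{b}_{\hbar}(q)=(1+\hbar^{1/4}C_0)^{-1}(b(q)-\hbar^{1/4}C_0)$, it gives $q_{\hbar}(e^{\Phi_N}\psi)\ge\hbar\int_M\tilde{b}_{\hbar}(q)\,|e^{\Phi_N}\psi|^2\,\dd q$; since $b\ge b_0>0$ and $x\mapsto(x-\hbar^{1/4}C_0)/(1+\hbar^{1/4}C_0)$ is increasing, one has $\tilde{b}_{\hbar}(q)\ge\tilde{b}_1-C\hbar^{1/4}$ on the set $\{b\ge\tilde{b}_1\}$, while $|\tilde{b}_{\hbar}(q)|\le \tilde{b}_1$ (up to $O(\hbar^{1/4})$) on $\{b<\tilde{b}_1\}$. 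Combining with the identity above, using $|d\Phi_N|\le\hbar^{-\alpha}$ and $\lambda\le\hbar b_1$, and dividing by $\hbar$:
\begin{align*}
\int_M\left(\tilde{b}_{\hbar}(q)-b_1-\hbar^{1-2\alpha}\right)\left|e^{\Phi_N}\psi\right|^2\,\dd q\le 0 .
\end{align*}
Because $b_1<\tilde{b}_1$ and $1-2\alpha>0$, for $\hbar$ small the integrand is $\ge c_1:=\tfrac12(\tilde{b}_1-b_1)>0$ on the classically forbidden region $\{b\ge\tilde{b}_1\}$, whereas on $\{b<\tilde{b}_1\}\subset K$ its absolute value is bounded by a constant $C$; moving the forbidden part to the left and the rest to the right gives
\begin{align*}
c_1\int_{\{b\ge\tilde{b}_1\}}\left|e^{\Phi_N}\psi\right|^2\,\dd q\le C\int_{\{b<\tilde{b}_1\}}\left|e^{\Phi_N}\psi\right|^2\,\dd q .
\end{align*}
Since $\Phi_N\equiv 0$ on $K\supset\{b<\tilde{b}_1\}$, the right-hand side is $\le C\Vert\psi\Vert^2$; adding $\int_K|e^{\Phi_N}\psi|^2=\int_K|\psi|^2\le\Vert\psi\Vert^2$ yields $\int_M|e^{\Phi_N}\psi|^2\,\dd q\le C'\Vert\psi\Vert^2$ with $C'$ uniform in $N$ and in $\hbar\in(0,\hbar_0]$. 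Letting $N\to\infty$ by monotone convergence gives the weighted estimate.

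For the ``in particular'' assertion, $(1-\chi_0)\psi$ is supported in $\{q:d(q,K)>\varepsilon\}$, where $e^{2d(q,K)\hbar^{-\alpha}}\ge e^{2\varepsilon\hbar^{-\alpha}}$, hence
\begin{align*}
\Vert\psi-\chi_0\psi\Vert^2\le e^{-2\varepsilon\hbar^{-\alpha}}\int_M\left|e^{d(q,K)\hbar^{-\alpha}}\psi\right|^2\dd q\le C'e^{-2\varepsilon\hbar^{-\alpha}}\Vert\psi\Vert^2=\grandO(\hbar^\infty)\Vert\psi\Vert^2 .
\end{align*}

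The only points requiring care, but none genuinely difficult and all already in \cite{HeMo96, Agmon}, are the justification of the magnetic Agmon identity for the merely Lipschitz weight $\Phi_N$ (done by approximation, using that $e^{\Phi_N}$ preserves the form domain), and the bookkeeping of the $\hbar^{1/4}$-loss from Assumption~\ref{InequalityIntensity} against the $\hbar^{1-2\alpha}$-gain from the weight — it is precisely the requirement that this gain tend to $0$ that forces the restriction $\alpha<1/2$ (and explains why one obtains the rough decay rate $\hbar^{-\alpha}$ rather than the Agmon-metric rate $\hbar^{-1/2}$).
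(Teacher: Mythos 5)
Your proposal is correct and follows essentially the same route as the paper: apply the magnetic Agmon identity (the paper's Lemma~\ref{AgmonFormula}) to a truncated weight, combine it with Assumption~\ref{InequalityIntensity}, split the resulting integral along $K$ vs.\ $K^c$ using $\tilde b_1 - b_1 > 0$ and $1-2\alpha>0$ to absorb the $\hbar^{1/4}$ and $\hbar^{1-2\alpha}$ losses, and then pass to the limit in the truncation. The only cosmetic difference is your choice of truncation $\Phi_N=\min(d(q,K),N)\hbar^{-\alpha}$ with monotone convergence, versus the paper's compactly supported cutoff $\chi_m(d(q,K))\hbar^{-\alpha}$ with Fatou; both are standard and equivalent.
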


\begin{proof}
If $\Phi : M \rightarrow \R$ is a Lipschitz function such that $e^{\Phi} \psi$ belongs to the domain of $q_{\hbar}$, the Agmon formula (Theorem \ref{AgmonFormula} in Appendix),
$$q_{\hbar}(e^{\Phi} \psi) = \lambda \Vert e^{\Phi} \psi \Vert^2 + \hbar^2 \Vert \dd \Phi e^{\Phi} \psi \Vert^2,$$
together with the Assumption \ref{InequalityIntensity},
$$(1+\hbar^{1/4}C_0)q_{\hbar}(e^{\Phi}\psi) \geq \int \hbar (b(q) -  \hbar^{1/4} C_0) \vert e^{\Phi} \psi \vert^2 \dd q_g,$$ yields to:
$$\int \left[ \hbar (b(q) - \hbar^{1/4}C_0) - (1+\hbar ^{1/4} C_0) ( \lambda + \hbar^2 \vert \dd \Phi \vert^2 ) \right] \vert e^{\Phi} \psi \vert^2  \dd q_g \leq 0.$$
We split this integral into two parts:
\begin{align*}
\int_{K^c} & \left[ \hbar(b(q) - \hbar^{1/4}C_0) - (1+\hbar ^{1/4} C_0) ( \lambda + \hbar^2 \vert \dd \Phi \vert^2 ) \right]  \vert e^{\Phi} \psi \vert^2 \dd q_g \\ &\leq \int_{K} \left[ - \hbar (b(q)-\hbar^{1/4}C_0) + (1+\hbar ^{1/4} C_0) ( \lambda + \hbar^2 \vert \dd \Phi \vert^2 ) \right] \vert e^{\Phi} \psi \vert^2 \dd q_g.
\end{align*}
We choose $\Phi$:
$$\Phi_m(q) = \chi_m(d(q,K)) \hbar^{-\alpha} \quad \text{for } m >0,$$
where $\chi_m(t) = t$ for $t <m$, $\chi_m(t) =0$ for $t > 2m$, and $\chi_m'$ uniformly bounded with respect to $m$. Since $\Phi_m(q) =0$ on $K$ and $b(q)- \hbar^{1/4} C_0 \geq 0$, we have:
$$\int_{K^c} \left[ \hbar (b(q) - \hbar^{1/4}C_0) - (1+\hbar^{1/4}C_0)(\lambda + \hbar^2 \vert \dd \Phi_m \vert^2 ) \right] \vert e^{\Phi_m} \psi \vert^2 \dd q_g \leq C \hbar \Vert \psi \Vert^2.$$
Morever, $\lambda \leq b_1 \hbar$ and $\vert \dd \Phi_m \vert^2 \leq \tilde{C} \hbar^{-2 \alpha}$:
$$\int_{K^c} \left[ \hbar (b(q) - \hbar^{1/4}C_0) - (1+\hbar^{1/4}C_0)(b_1 \hbar + \tilde{C}\hbar^{2-2 \alpha}) \right] \vert e^{\Phi_m} \psi \vert^2 \dd q_g \leq C \hbar \Vert \psi \Vert^2.$$
Thus, up to changing the constant $C_0$:
$$\int_{K^c} \hbar (\tilde{b}_1 - b_1 - \hbar^{1/4}C_0 - \tilde{C} \hbar^{1-2\alpha}) \vert e^{\Phi_m}\psi \vert^2 \dd q \leq C \hbar \Vert \psi \Vert^2.$$ Since $\tilde{b}_1 > b_1$, we have $\tilde{b}_1 - b_1 - \hbar^{1/4}C_0 - \tilde{C} \hbar^{1-2\alpha} >0$ for $\hbar$ small enough. Hence  $$\int_{K^c} \vert e^{\Phi_m} \psi \vert^2 \dd q \leq C \Vert \psi \Vert^2,$$ and since $\Phi_m=0$ on $K$:
$$\int \vert e^{\Phi_m} \psi \vert^2 \dd q \leq (C+1) \Vert \psi \Vert^2.$$
By Fatou's lemma in the limit $m \rightarrow +\infty$,
$$\int \vert e^{d(q,K) \hbar^{-\alpha}} \psi \vert^2 \dd q \leq (C+1) \Vert \psi \Vert^2.$$ To prove the second result, notice that
\begin{align*}
\Vert \psi - \chi_0 \psi \Vert^2 &= \int_{\chi_0 \neq 1} \vert (1- \chi_0) \psi \vert^2 \dd q \leq \int_{\chi_0 \neq 1} \vert \psi \vert^2\dd q\\
& \leq \int_{K_{\varepsilon}^c} \vert \psi \vert^2 \dd q\\
& \leq e^{-2 \varepsilon \hbar^{-\alpha}} \int_{K_{\varepsilon}^c} \vert e^{d(q,K)\hbar^{-\alpha}} \psi \vert ^2 \dd q\\
& \leq C e^{-2 \varepsilon \hbar^{-\alpha}} \Vert \psi \Vert ^2 = \grandO (\hbar^{\infty}).
\end{align*}
\end{proof}

Now we prove the microlocalization of the eigenfunctions of $\Lh$ near $\Sigma$.

\begin{theorem} \label{microloc} Let $\varepsilon>0$, $\delta \in (0,\frac{1}{2})$, and $0<b_1<\tilde{b}_1$. Let $\chi_0 : M \rightarrow [0,1]$ be a smooth function being $1$ on $K_{\varepsilon}$. Let $\chi_1 : \R \rightarrow [0,1]$ be a smooth compactly supported cutoff function being $1$ near $0$. Then for any normalized eigenpair $(\lambda, \psi)$ of $\Lh$ such that $\lambda \leq \hbar b_1$ we have:
$$ \psi = \chi_1(\hbar^{-2\delta} \Lh) \chi_0(q) \psi + \grandO(\hbar^{\infty}) \quad \text{in } \Ld(M).$$
\end{theorem}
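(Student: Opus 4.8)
The plan is to combine the Agmon estimates of Theorem \ref{Agmon} with an elliptic/energy argument that forces the eigenfunction to be microlocalized where $H$ (equivalently $\Lh$) is of size $\grandO(\hbar)$. By Theorem \ref{Agmon} we already know $\psi = \chi_0(q)\psi + \grandO(\hbar^\infty)$, so it suffices to prove that $\chi_0(q)\psi = \chi_1(\hbar^{-2\delta}\Lh)\chi_0(q)\psi + \grandO(\hbar^\infty)$. Write $\phi := \chi_0(q)\psi$; this is a function supported in $K_\varepsilon \subset \Omega$, satisfying $\Lh \phi = \lambda \phi + [\Lh,\chi_0]\psi$, and the commutator term is $\grandO(\hbar^\infty)$ by the exponential decay of $\psi$ away from $K$ (its symbol is supported where $\chi_0 \neq 1$, i.e.\ outside $K_{\varepsilon/2}$, say). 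So $\phi$ is an approximate eigenfunction: $\Lh\phi = \lambda\phi + \grandO(\hbar^\infty)$ with $\lambda \leq \hbar b_1 = \grandO(\hbar)$.

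Now set $\psi_{\mathrm{bad}} := (1-\chi_1(\hbar^{-2\delta}\Lh))\phi$, which is what we want to show is $\grandO(\hbar^\infty)$. The operator $g_\hbar(\Lh) := 1-\chi_1(\hbar^{-2\delta}\Lh)$ is, by functional calculus for the self-adjoint operator $\Lh$, supported in the spectral region $\Lh \gtrsim \hbar^{2\delta}$. The key point is that on the range of $g_\hbar(\Lh)$ one has the lower bound $\langle \Lh u,u\rangle \geq c\hbar^{2\delta}\|u\|^2$ for some $c>0$ (again by the spectral theorem, since $1-\chi_1$ vanishes on $[0,c\hbar^{2\delta}]$ for a suitable $c$). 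I would pair $\Lh\phi = \lambda\phi + \grandO(\hbar^\infty)$ with $\psi_{\mathrm{bad}}$: since $g_\hbar(\Lh)$ commutes with $\Lh$,
$$\langle \Lh \psi_{\mathrm{bad}}, \psi_{\mathrm{bad}}\rangle = \langle \Lh\phi, g_\hbar(\Lh)^2\phi\rangle = \lambda\langle\phi, g_\hbar(\Lh)^2\phi\rangle + \grandO(\hbar^\infty) = \lambda\|\psi_{\mathrm{bad}}\|^2 + \grandO(\hbar^\infty).$$
Combining with the lower bound $\langle\Lh\psi_{\mathrm{bad}},\psi_{\mathrm{bad}}\rangle \geq c\hbar^{2\delta}\|\psi_{\mathrm{bad}}\|^2$ and $\lambda \leq b_1\hbar \ll \hbar^{2\delta}$ (because $\delta < 1/2$ means $2\delta < 1$), we get $(c\hbar^{2\delta} - b_1\hbar)\|\psi_{\mathrm{bad}}\|^2 \leq \grandO(\hbar^\infty)$, hence $\|\psi_{\mathrm{bad}}\|^2 = \grandO(\hbar^\infty)$. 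This gives $\phi = \chi_1(\hbar^{-2\delta}\Lh)\phi + \grandO(\hbar^\infty)$, and substituting back $\phi = \chi_0\psi + \grandO(\hbar^\infty)$ and $\psi = \phi + \grandO(\hbar^\infty)$ finishes the proof.

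The main obstacle I anticipate is the careful justification of the commutator estimate $[\Lh,\chi_0]\psi = \grandO(\hbar^\infty)$: one must verify that $\Lh$ behaves like a (possibly unbounded) differential operator whose commutator with the multiplication operator $\chi_0$ is a first-order operator supported in $\{\chi_0\neq 1\}$, and then invoke the Agmon exponential weight $e^{d(q,K)\hbar^{-\alpha}}$ — which is bounded below by $e^{c\varepsilon\hbar^{-\alpha}}$ on that support — to absorb the polynomial-in-$\hbar^{-1}$ losses from the derivatives and still come out with $\grandO(\hbar^\infty)$. A secondary technical point is making sure all manipulations with $g_\hbar(\Lh)$ (a function of an unbounded operator) are legitimate on the relevant domains; this is routine via the spectral theorem since $\chi_1$ is compactly supported, so $g_\hbar(\Lh)$ maps into $D(\Lh)$ and $\phi \in D(\Lh)$ as $\psi$ is an eigenfunction and $\chi_0$ is smooth and bounded.
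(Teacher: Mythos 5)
Your proposal is correct but takes a substantially more laborious route than the paper's argument, and in doing so introduces a technical step that is not needed. The paper observes that since $\psi$ is an \emph{exact} eigenfunction, the spectral theorem gives the exact identity
$$\chi_1(\hbar^{-2\delta}\Lh)\psi = \chi_1(\hbar^{-2\delta}\lambda)\psi = \psi,$$
because $\hbar^{-2\delta}\lambda \leq b_1 \hbar^{1-2\delta}\to 0$ as $\hbar\to 0$ (here $2\delta<1$) and $\chi_1=1$ near the origin. Applying the bounded operator $\chi_1(\hbar^{-2\delta}\Lh)$ to the Agmon identity $\psi = \chi_0\psi + \grandO(\hbar^\infty)$ then immediately yields $\psi = \chi_1(\hbar^{-2\delta}\Lh)\chi_0\psi + \grandO(\hbar^\infty)$. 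There is no need to consider the approximate eigenfunction $\phi=\chi_0\psi$, and in particular the commutator estimate $[\Lh,\chi_0]\psi = \grandO(\hbar^\infty)$ that you correctly flag as the main technical obstacle is entirely avoided.

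Your proof does go through: $\phi$ is indeed a quasimode (the commutator bound follows from the Agmon weighted $\Ld$-estimate on $\psi$ together with the weighted energy bound coming from the Agmon formula, which controls $(i\hbar\dd + A)\psi$ in the same exponentially weighted norm), and the energy argument on $\psi_{\mathrm{bad}}$ is valid by the spectral theorem since $g_\hbar$ is supported in $[c\hbar^{2\delta},\infty)$. The main thing your route buys is robustness: it would still apply if $\psi$ were only an approximate eigenfunction. But within the hypotheses at hand, you are re-deriving an approximate version of the exact spectral identity that the paper uses for free, at the cost of having to control a gradient of $\psi$ with exponential weight. You should apply the spectral cutoff to $\psi$ \emph{before} multiplying by $\chi_0$, not after; the order of operations matters and is exactly what makes the paper's proof two lines long.
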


\begin{proof}
Using Theorem \ref{Agmon}, we have $\psi = \chi_0 \psi + \grandO(\hbar^{\infty})$ in $\Ld(M)$. Since $\chi_1(\hbar^{-2\delta} \Lh)$ is a bounded operator, we get:
$$\chi_1(\hbar^{-2\delta} \Lh) \psi = \chi_1(\hbar^{-2\delta} \Lh) \chi_0 \psi + \grandO(\hbar^{\infty}) \quad \text{in } \Ld(M).$$
In fact, $$\psi = \chi_1(\hbar^{-2\delta} \Lh) \psi.$$
Indeed, there exists a $C>0$ such that $$\chi_1(\hbar^{-2 \delta} . \ ) = 1 \quad \text{on } B(0,C\hbar^{2\delta}),$$
and for $\hbar \in (0,\hbar_0)$ small enough, $$\lambda \in B(0,b_1\hbar) \subset B(0,C \hbar^{2\delta}).$$ Thus,
$$\chi_1(\hbar^{-2\delta} \Lh) \psi = \chi_1(\hbar^{-2\delta} \lambda) \psi = \psi.$$
\end{proof}

\subsection{Microlocalization of the eigenfunctions of $\BNh$}

The next two theorems states the microlocalization of the eigenfunctions of the normal form. We recall that if $\varphi$ is defined by Theorem \ref{symplectomorphism}, we have:
$$\varphi(K) = \lbrace w \in V : \hat{b}(w) \leq \tilde{b}_1 \rbrace,$$
with $\hat{b}(w) = b \circ \varphi^{-1}(w).$ We also recall the definition (\ref{DefKEpsilon}) of $K_{\varepsilon}$. This first lemma gives a microlocalization result on the $w$ variable.

\begin{lemma}\label{microBNh1}
Let $\hbar \in (0, \hbar_0]$ and $b_1 \in (0, \tilde{b}_1)$. Let $\chi_0$ be a smooth cutoff function on $\R^{d}_w$ supported on $V$ such that $\chi_0 = 1$ on $\varphi (K_{\varepsilon})$. Then for any normalized eigenpair $(\lambda,\psi)$ of $\BNh$ such that $\lambda \leq \hbar b_1$, we have:
\begin{align*}
\psi = \Op( \chi_0 ) \psi + \grandO ( \hbar^{\infty} ) \quad \text{in } \Ld (\R^d_{x,y}).
\end{align*}
\end{lemma}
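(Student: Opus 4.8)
The plan is to diagonalise $\BNh$ in the Hermite basis of the $x$-variables, which turns the statement into a uniform semiclassical elliptic estimate for scalar pseudodifferential operators in the $y$-variables. First I would write $\psi=\sum_{n\in\N^{d/2}}\psi_n\otimes\mathbf{h}_n$ with $\psi_n\in\Ld(\R^{d/2}_y)$, using the Hilbert basis $(\mathbf{h}_n)$ of $\Ld(\R^{d/2}_x)$. Since $\BNh$ commutes with the oscillators $\Ih^{(j)}$, Lemma~\ref{reductionBNHdiagonale} gives $\BNh^{(n)}\psi_n=\lambda\psi_n$ for every $n$. The lower bound obtained in the proof of \eqref{inclusionSpectreBNhn}, namely $\langle\BNh^{(n)}u,u\rangle\ge\hbar(1-\zeta)(2c\vert n\vert+b_0-\tilde{c}\hbar)\Vert u\Vert^2$, in fact holds for all $n$ (only its last consequence used $\vert n\vert\le n_{max}$); combined with $\lambda\le\hbar b_1$ it forces $\psi_n=0$ as soon as $\vert n\vert>n_1$, for some $n_1$ independent of $\hbar$. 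Thus $\psi=\sum_{\vert n\vert\le n_1}\psi_n\otimes\mathbf{h}_n$ is a finite sum.

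Because $\chi_0$ depends only on $w=(y,\eta)$, the operator $\Op(\chi_0)$ acts blockwise, $\Op(\chi_0)(\psi_n\otimes\mathbf{h}_n)=(\Op(\chi_0)\psi_n)\otimes\mathbf{h}_n$ with $\Op(\chi_0)$ now the quantisation on $\R^{d/2}_y$, so it suffices to prove $\Op(1-\chi_0)\psi_n=\grandO(\hbar^\infty)\Vert\psi_n\Vert$, uniformly over the finitely many relevant $n$. The heart of the matter is that $\hbar^{-1}(\BNh^{(n)}-\lambda)$ is elliptic on $\supp(1-\chi_0)$: its symbol is $b^{[n]}(w)-\hbar^{-1}\lambda+\grandO(\hbar)$, where $b^{[n]}(w)=\sum_j(2n_j+1)\hat{\beta}_j(w)\ge\hat{b}(w)$ and the $\grandO(\hbar)$ is $\hbar^{-1}f^{\star}(w,\hbar(2n+1),\hbar)$, which is $\grandO(\hbar)$ since the $(I,\hbar)$-Taylor series of $f^{\star}$ vanishes to order two at the origin and $\vert n\vert$ is bounded. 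Now $\chi_0=1$ on an open neighbourhood of $\varphi(K)=\{w\in V:\hat{b}(w)\le\tilde{b}_1\}$, while $\hat{b}=\sum_j\hat{\beta}_j\ge\tilde{b}_1$ on $V^c$ by \eqref{extentionH0}; hence $\hat{b}\ge\tilde{b}_1$ on $\supp(1-\chi_0)$, so $b^{[n]}-\hbar^{-1}\lambda\ge\tilde{b}_1-b_1>0$ there, and for $\hbar$ small the symbol stays $\ge\tfrac12(\tilde{b}_1-b_1)>0$ on $\supp(1-\chi_0)$, uniformly in $n$.

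Then I would invoke the standard semiclassical elliptic estimate: a parametrix $E_n$ of $\hbar^{-1}(\BNh^{(n)}-\lambda)$ built microlocally near $\supp(1-\chi_0)$ satisfies $E_n\,\hbar^{-1}(\BNh^{(n)}-\lambda)=\Op(1-\chi_0)+\grandO(\hbar^\infty)$, and applying it to $\psi_n$ (annihilated by $\BNh^{(n)}-\lambda$) yields $\Op(1-\chi_0)\psi_n=\grandO(\hbar^\infty)\Vert\psi_n\Vert$ with constants uniform over $\vert n\vert\le n_1$; summing, $\Vert\Op(1-\chi_0)\psi\Vert^2=\sum_{\vert n\vert\le n_1}\Vert\Op(1-\chi_0)\psi_n\Vert^2=\grandO(\hbar^\infty)\Vert\psi\Vert^2$, which is the assertion. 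I expect the main technical point to be precisely this uniform elliptic estimate: one must check that the $\BNh^{(n)}$ lie in a fixed symbol class, which is exactly where one uses that the $\hat{\beta}_j$ were extended off $V$ so as to remain smooth, bounded with bounded derivatives, and bounded below from a positive constant; the remaining work is routine bookkeeping with the Hermite decomposition.
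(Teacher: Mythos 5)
Your argument is correct, and it takes a genuinely different route from the paper's while relying on the same structural ingredients (the block reduction of Lemma~\ref{reductionBNHdiagonale}, the extension property~\eqref{extentionH0}, and the pointwise bound $b^{[n]}\ge\hat b$). The paper instead works with $\chi=1-\chi_0$, pairs the eigenvalue equation against $\Op(\chi)\psi$, uses the G\aa rding inequality to get a lower bound $\langle\BNh\Op(\chi)\psi,\Op(\chi)\psi\rangle\ge(1-\zeta)\hbar\tilde b_1\Vert\Op(\chi)\psi\Vert^2$ (after reducing to a single block $u\otimes\mathbf h_n$), bounds the commutator $[\BNh,\Op(\chi)]$ by $\grandO(\hbar^2)\Vert\Op(\bar\chi)\psi\Vert^2$, and then bootstraps an $\hbar$-gain by iterating the resulting inequality over a nested family of cutoffs. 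Your version replaces this energy-estimate-plus-iteration by an explicit microlocal parametrix for $\hbar^{-1}(\BNh^{(n)}-\lambda)$ on $\supp(1-\chi_0)$, which produces the $\grandO(\hbar^\infty)$ in one shot. What each buys: the paper's route avoids constructing a parametrix and stays at the level of quadratic forms, at the price of the iteration over auxiliary cutoffs; your route is conceptually cleaner and exposes the underlying mechanism (uniform ellipticity of the reduced operators off $\varphi(K_\varepsilon)$), at the price of verifying uniformity of the symbol class and of the parametrix in $n$ and in $\hbar^{-1}\lambda\in[0,b_1]$ — exactly the caveat you flag.

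Two small points worth keeping in mind. First, you correctly observe that the lower bound in the proof of~\eqref{inclusionSpectreBNhn} holds for every $n$, and you use it to kill the blocks with $\vert n\vert$ large; the paper instead invokes Lemma~\ref{reductionBNHdiagonale} to write $\psi=u\otimes\mathbf h_n$ for a single $n$ with $\vert n\vert\le n_{max}$, which tacitly assumes a basis of eigenfunctions aligned with the blocks — your full decomposition $\psi=\sum_n\psi_n\otimes\mathbf h_n$ handles possible degeneracies more carefully. Second, for the parametrix you implicitly need ellipticity not just on $\supp(1-\chi_0)$ but on a small open neighbourhood of it, so that the iterated cutoffs of the parametrix construction fit; this is available because $\supp(1-\chi_0)\subset\overline{\varphi(K_\varepsilon)^c}$ is at distance $\ge\varepsilon$ from $\varphi(K)$ and $\hat b>\tilde b_1$ strictly there (with a uniform margin on the relevant compact), together with~\eqref{extentionH0} at infinity. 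With these remarks, your proof is sound.
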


\begin{proof}
Let $\chi = 1- \chi_0$, which is supported in $\varphi (K_{\varepsilon})^c$. The eigenvalue equation yields to
\begin{align} \label{01051902}
\langle \BNh \Op ( \chi ) \psi, \Op( \chi) \psi \rangle \leq b_1 \hbar \Vert \Op (\chi) \psi \Vert^2 + \langle [ \BNh , \Op ( \chi ) ] \psi, \Op ( \chi ) \psi \rangle.
\end{align}
Using Lemma \ref{reductionBNHdiagonale}, we can write $\psi = u \otimes \mathbf{h}_n$ for some $n \in \N^{d/2}$, $u \in \Ld (\R^{d/2}_w)$, with $0 \leq \vert n \vert \leq n_{max}$. Then
\begin{align*}
[ \BNh , \Op (\chi ) ] \psi &= [ \BNh^{(n)} , \Op ( \chi ) ] u \otimes \mathbf{h}_n \\ 
&=  \hbar \left[ \sum_{j=1}^{d/2} (2n_j + 1) \Op ( \hat{\beta}_j ), \Op (\chi ) \right] \psi + \grandO (\hbar^2),
\end{align*}
because the principal symbol of $\BNh^{(n)}$ is $\sum_{j=1}^{d/2} \hbar (2n_j + 1) \hat{\beta}_j$. Since the symbol of the commutator is of order $\hbar$ and supported in $\supp \chi$, we have
\begin{align}\label{01051901}
\langle [ \BNh , \Op (\chi ) ] \psi , \Op (\chi ) \psi \rangle \leq C \hbar^2 \Vert \Op(\bar{\chi}) \psi \Vert^2,
\end{align}
where $\bar{\chi}$ is a small extension of $\chi$, with value $1$ on $\supp \chi$ and $0$ on a neighborhood of $\varphi( K_{\varepsilon} )$.
Moreover using Proposition \ref{perturbationH0}, 
\begin{align*}
\langle \BNh \Op ( \chi )\psi , \Op ( \chi ) \psi \rangle &\geq (1-\zeta) \langle \Lh^0 \Op (\chi) \psi, \Op ( \chi) \psi \rangle \\ & \geq (1- \zeta) \hbar \tilde{b}_1 \Vert \Op ( \chi ) \psi \Vert^2,
\end{align*}
where we used the G$\overset{\circ}{\text{a}}$rding inequality because, the symbol of $\Lh^0$ is greater than $\tilde{b}_1$ on $\supp \chi$. Together with (\ref{01051902}) and (\ref{01051901}), we get
\begin{align*}
 \hbar \left( (1-\zeta) \tilde{b}_1 - b_1 \right) \Vert \Op ( \chi ) \psi \Vert^2 \leq C \hbar^2 \Vert \Op ( \bar{ \chi } ) \psi \Vert^2.
\end{align*}
For $\eta$ small enough, $(1-\zeta) \tilde{b}_1 > b_1$. Hence, dividing by $\hbar$ and iterating with $\bar{\chi}$ instead of $\chi$, we get
$$\Vert \Op( \chi ) \psi \Vert^2 = \grandO ( \hbar^{\infty} ).$$
\end{proof}

Now we prove the microlocalization of the eigenfunctions of $\BNh$ on a neighborhood of $\varphi(\Sigma) = \lbrace (z,w) : z=0 \rbrace$.

\begin{theorem} \label{microloc2} Let $\hbar \in (0,\hbar_0]$, $b_1 \in (0, \tilde{b}_1)$, and $\delta \in (0,1/2)$. Let $\chi_0$ be a smooth cutoff function on $\R^{d/2}_w$ supported on $V$ such that $\chi_0=1$ on $\varphi(K_{\varepsilon})$ and $\chi_1$ a real cutoff function being $1$ near $0$. Then for any normalized eigenpair $(\lambda, \psi)$ of $\BNh$ such that $\lambda \leq \hbar b_1$, we have:
$$ \psi = \chi_1(\hbar^{-2\delta} \Ih^{(1)})...\chi_1 (\hbar^{-2\delta} \Ih^{(d/2)}) \Op (\chi_0(w)) \psi  + \grandO(\hbar^{\infty}) \quad \text{in } \Ld(\R^d).$$
\end{theorem}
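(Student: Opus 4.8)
The plan is to split the asserted identity into a microlocalization in the variable $w=(y,\eta)$, which is exactly Lemma~\ref{microBNh1}, and a microlocalization in each harmonic-oscillator variable $x_j$, and then to glue the two. From Lemma~\ref{microBNh1} I would first record that $\Op(\chi_0(w))\psi = \psi + \grandO(\hbar^{\infty})$ in $\Ld(\R^d)$. The only extra input needed is that, on the eigenfunctions considered, each $\Ih^{(j)}$ has spectral parameter of size $\grandO(\hbar)$, hence $\ll\hbar^{2\delta}$ since $\delta<1/2$, so that $\chi_1(\hbar^{-2\delta}\Ih^{(j)})$ acts as the identity on $\psi$.

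To make this precise, arguing exactly as in the proof of Lemma~\ref{microBNh1} (using the block-diagonal structure of $\BNh$ from Lemma~\ref{reductionBNHdiagonale} in the decomposition (\ref{decompositionOfL2space}), together with (\ref{decompositionSpectreBNh})), I would write $\psi = u\otimes\mathbf{h}_n$ for some $u\in\Ld(\R^{d/2}_y)$ and some $n\in\N^{d/2}$ with $\vert n\vert\leq n_{max}$, where $n_{max}$ does not depend on $\hbar$. Then (\ref{eigenfunctionsOfIh}) gives, for each $j$,
$$ \hbar^{-2\delta}\,\Ih^{(j)}\psi = \hbar^{1-2\delta}(2n_j+1)\,\psi . $$
Since $0<\delta<1/2$ and $2n_j+1\leq 2n_{max}+1$, the scalar $\hbar^{1-2\delta}(2n_j+1)$ lies in any prescribed neighborhood of $0$ once $\hbar\in(0,\hbar_0]$ with $\hbar_0$ small enough, uniformly over all admissible $n$; since $\chi_1\equiv 1$ near $0$, this yields the exact identities $\chi_1(\hbar^{-2\delta}\Ih^{(j)})\psi = \psi$ for $j=1,\dots,d/2$, hence also $\chi_1(\hbar^{-2\delta}\Ih^{(1)})\cdots\chi_1(\hbar^{-2\delta}\Ih^{(d/2)})\psi = \psi$.

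Finally I would combine: the operator $P:=\chi_1(\hbar^{-2\delta}\Ih^{(1)})\cdots\chi_1(\hbar^{-2\delta}\Ih^{(d/2)})$ is bounded with $\Vert P\Vert\leq 1$ by functional calculus for the non-negative self-adjoint operators $\Ih^{(j)}$ and $\vert\chi_1\vert\leq 1$, so applying $P$ to $\Op(\chi_0(w))\psi = \psi + \grandO(\hbar^{\infty})$, using $\Vert P\Vert\leq 1$ on the remainder and the previous paragraph on the main term, gives
$$ \chi_1(\hbar^{-2\delta}\Ih^{(1)})\cdots\chi_1(\hbar^{-2\delta}\Ih^{(d/2)})\,\Op(\chi_0(w))\,\psi = P\psi + \grandO(\hbar^{\infty}) = \psi + \grandO(\hbar^{\infty}) , $$
which is the claimed formula.

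I do not expect any real obstacle here: the statement follows directly from the block structure of $\BNh$ recorded in Lemma~\ref{reductionBNHdiagonale} and from the $\hbar$-uniform a priori bound $\vert n\vert\leq n_{max}$ on the Hermite index. The two points to check with care are that $n_{max}$ is uniform in $\hbar$ --- which is precisely (\ref{decompositionSpectreBNh}) --- and that $\hbar^{1-2\delta}\to 0$, which needs $\delta<1/2$, so that the Hermite energy scale $\hbar(2n_j+1)$ is negligible against the cutoff scale $\hbar^{2\delta}$. This theorem is the $\BNh$-analogue of Theorem~\ref{microloc}: the single cutoff $\chi_1(\hbar^{-2\delta}\Lh)$ there is replaced by the product of the $\chi_1(\hbar^{-2\delta}\Ih^{(j)})$, in accordance with the splitting $\Lh^0=\sum_{j=1}^{d/2}\Op(\hat{\beta}_j)\,\Ih^{(j)}$ of the principal energy into the comparable components $\Ih^{(j)}$, the functions $\hat{\beta}_j$ being bounded below by a positive constant.
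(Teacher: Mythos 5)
Your proposal is correct and follows essentially the same route as the paper's proof: both reduce to Lemma \ref{microBNh1} plus the boundedness of the product $\chi_1(\hbar^{-2\delta}\Ih^{(1)})\cdots\chi_1(\hbar^{-2\delta}\Ih^{(d/2)})$, and then use the block-diagonal decomposition $\psi=u\otimes\mathbf{h}_n$ with $\vert n\vert\leq n_{max}$ together with $\delta<1/2$ to show that this product acts as the identity on $\psi$ for $\hbar$ small. The only cosmetic difference is that you make the operator-norm bound $\Vert P\Vert\leq 1$ explicit via functional calculus, whereas the paper only invokes boundedness.
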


\begin{proof}
According to Lemma \ref{microBNh1}, 
$$\psi = \Op (\chi_0) \psi + \grandO(\hbar^{\infty}).$$
Since $\chi_1^{d/2}(\hbar^{-2 \delta} \Ih) := \chi_1(\hbar^{-2\delta} \Ih^{(1)})...\chi_1 (\hbar^{-2\delta} \Ih^{(d/2)})$ is a bounded operator, we have
\begin{align*}
\chi_1^{d/2}(\hbar^{-2 \delta} \Ih) \psi = \chi_1^{d/2}(\hbar^{-2 \delta} \Ih) \Op(\chi_0) \psi + \grandO( \hbar^{\infty}).
\end{align*}
It remains to prove that $\psi = \chi_1^{d/2}(\hbar^{-2 \delta} \Ih) \psi$ for $\hbar$ small enough. Using Lemma \ref{reductionBNHdiagonale}, $\psi = u \otimes \mathbf{h}_n$ for some $u \in \Ld(\R^{d/2}_{y}) $, $n \in \N^{d/2}$ with $0 \leq \vert n \vert \leq n_{max}$, and so
$$\chi_1^{d/2}(\hbar^{-2 \delta} \Ih)\psi = \chi_1(\hbar^{1-2\delta} (2n_1 + 1) ) ... \chi_1(\hbar^{1-2\delta} (2n_{d/2} + 1) ) \psi.$$
But $\chi_1 = 1$ on a neighborhood of $0$, so there is $\hbar_0>0$ such that, for any $\hbar \in (0,\hbar_0]$ and any $0 \leq \vert n \vert \leq n_{max}$, $$\chi_1(\hbar^{1-2\delta} (2n_1 + 1) ) ... \chi_1(\hbar^{1-2\delta} (2n_{d/2} + 1) )=1.$$
Thus, $$\psi = \chi_1^{d/2}(\hbar^{-2 \delta} \Ih)\psi.$$
\end{proof}

\subsection{Rank of the spectral projections}

We want the microlocalization Theorems \ref{microloc} and \ref{microloc2} to be uniform with respect to $\lambda \in (-\infty, b_1 \hbar]$. That is why we need the rank of the spectral projections to be bounded by some finite power of $\hbar^{-1}$. If $\mathcal{A}$ is a bounded from below self-adjoint operator, and $\alpha \in \R$, we denote $N(\mathcal{A}, \alpha)$ the number of eigenvalues of $\mathcal{A}$ smaller than $\alpha$, counted with multiplicities. It is the rank of the spectral projection $\mathbf{1}_{]-\infty, \alpha ]} (\mathcal{A})$.\\

The proof of the following estimate is inspired by the proof of Lemma \ref{manifoldinequality} in Appendix, adapted from \cite{HeMo96}. The idea is to locally approximate the magnetic field to a constant.

\begin{lemma}\label{rank} Let $b_0 < b_1 < \tilde{b}_1$. There exists $C>0$ and $\hbar_0>0$ such that for all $\hbar \in (0,\hbar_0]$, we have:
$$N(\Lh, \hbar b_1) \leq C\hbar^{-d/2}.$$
\end{lemma}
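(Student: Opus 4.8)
The plan is to reduce the estimate to a finite family of model operators with constant magnetic field, for which the counting function is classical, and then to patch these local estimates together by an IMS (partition of unity) argument, controlling the localization error by the uncertainty principle. First I would fix a small length scale $\rho = \rho(\hbar)$, to be chosen as a power of $\hbar$ (typically $\rho \sim \hbar^{1/2 - \epsilon}$ or $\rho$ a small fixed constant, depending on how sharp one wants the constant), and cover the compact set $K = \{b(q) \le \tilde{b}_1\}$ together with a neighborhood by $\grandO(\rho^{-d})$ balls $B(q_k, \rho)$ of radius $\rho$. Outside a fixed neighborhood of $K$ — say outside $K_\varepsilon$ — Assumption~\ref{InequalityIntensity} together with $\tilde{b}_1 > b_1$ forces $q_\hbar(u) \ge \hbar b_1 \Vert u \Vert^2$ for $\hbar$ small, so no eigenvalues below $\hbar b_1$ are produced there; only the $\grandO(\rho^{-d})$ balls meeting $K_\varepsilon$ matter.

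Next I would take a quadratic partition of unity $\sum_k \chi_k^2 = 1$ subordinate to this cover, with $\Vert \nabla \chi_k \Vert \lesssim \rho^{-1}$, and use the IMS localization formula
$$ q_\hbar(u) = \sum_k q_\hbar(\chi_k u) - \hbar^2 \sum_k \Vert \,\vert\nabla \chi_k\vert\, u \Vert^2 \ge \sum_k q_\hbar(\chi_k u) - C \hbar^2 \rho^{-2} \Vert u \Vert^2. $$
On each ball $B(q_k,\rho)$, I would freeze the magnetic field: replace $A$ by the linear potential $A_k$ associated with the constant $2$-form $B(q_k)$, so that $q_\hbar^{(k)}(\chi_k u) := \int \vert (i\hbar\dd + A_k)\chi_k u\vert^2$ differs from $q_\hbar(\chi_k u)$ by an error bounded (after a Cauchy–Schwarz with a small parameter) by $C(\rho^2 + \rho \hbar \rho^{-1} + \dots)\,q_\hbar(\chi_k u) + \dots$; the standard trick from \cite{HeMo96} is that the difference of the two vector potentials is $\grandO(\rho^2)$ on the ball, which after an elementary manipulation is absorbed into a $(1+\grandO(\rho))$ multiplicative distortion of $q_\hbar^{(k)}$ plus a $\grandO(\hbar^2\rho^{-2} + \dots)$ additive error. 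The operator with constant field $B(q_k)$ is, after a linear symplectic/gauge change, a sum of $d/2$ decoupled Landau Hamiltonians, whose spectrum is $\{\hbar \sum_j (2n_j+1)\beta_j(q_k) : n \in \N^{d/2}\}$ with each level having density of states exactly $\frac{1}{(2\pi\hbar)^{d/2}}\beta_1(q_k)\cdots\beta_{d/2}(q_k)$ per unit volume; restricted to a ball of volume $\grandO(\rho^d)$ this gives $N(\,\cdot\,, \hbar b_1') \le C \hbar^{-d/2}\rho^d$ for the number of eigenvalues below a slightly enlarged threshold $\hbar b_1'$, since only the finitely many Landau levels with $\sum_j(2n_j+1)\beta_j(q_k) \le b_1' < \tilde b_1$ contribute (here one uses $b(q) = \sum_j \beta_j(q)$ bounded below and $b_1 < \tilde b_1$ to keep $b_1'$ below $\tilde b_1$ uniformly).

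Finally I would assemble: by the min-max principle and the IMS lower bound, $N(\Lh, \hbar b_1)$ is at most the sum over $k$ of $N\big(\text{local model on }B(q_k,\rho),\ \hbar b_1 + C\hbar^2\rho^{-2}\big)$, and choosing $\rho$ so that $\hbar^2\rho^{-2} = o(\hbar)$ — e.g. $\rho = \hbar^{1/2-\epsilon}$, or even $\rho$ a sufficiently small fixed constant if one is willing to use the non-degeneracy $\tilde b_1 - b_1 > 0$ to absorb $C\rho^2$ — keeps the enlarged threshold below $\hbar \tilde b_1$, so each term is $\le C\hbar^{-d/2}\rho^d$ and the number of terms is $\grandO(\rho^{-d})$; the product is $\grandO(\hbar^{-d/2})$. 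I expect the main obstacle to be the bookkeeping in the frozen-field comparison: one must show that replacing $A$ by $A_k$ on $B(q_k,\rho)$, after the gauge transformation that makes $A - A_k = \grandO(\rho^2)$, distorts the quadratic form only by factors $(1\pm\grandO(\rho))$ and additive errors small compared to $\hbar$ — uniformly in $k$ and $\hbar$ — and that the gauge function can be chosen so the Dirichlet condition and the support of $\chi_k u$ cause no trouble; this is exactly the estimate carried out in \cite{HeMo96} and in Lemma~\ref{manifoldinequality}, so I would follow that argument verbatim, taking care that here we need an \emph{upper} bound on the counting function (hence a \emph{lower} bound on the form, i.e.\ a $(1-\grandO(\rho))$ factor) rather than the lower spectral bound proved there.
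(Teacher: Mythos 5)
Your proposal is correct and follows essentially the same path as the paper's proof: an IMS (quadratic partition of unity) localization at an $\hbar$-dependent scale (the paper takes $\hbar^{\alpha}$ with $\alpha=3/8$, you suggest $\hbar^{1/2-\epsilon}$, and either works), a gauge transformation plus linearization of the vector potential on each small ball à la Lemma~\ref{manifoldinequality} to compare $q_{\hbar}$ with a constant-field model form, and the min-max principle applied to the span of the first eigenfunctions to transfer the counting to the direct sum of these model operators. One small remark in your favor: you make explicit that the per-ball count is $\grandO(\hbar^{-d/2}\rho^{d})$ (proportional to the volume of the ball via the Landau density of states) before summing over the $\grandO(\rho^{-d})$ balls, whereas the paper writes the per-piece bound simply as $\grandO(\hbar^{-d/2})$ with the $\hbar$-dependence of the number of pieces $J$ left implicit — your bookkeeping is the cleaner way to see why the total is $\grandO(\hbar^{-d/2})$.
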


\begin{proof}
Take $(\chi_m)_{m \geq 0}$ a smooth partition of unity, such that:
$$\sum_{m \geq 0} \chi_m^2(q) = 1 \quad \text{and} \quad \sum_{m \geq 0} \vert \dd \chi_m(q) \vert ^2 \leq C, \quad \forall q \in M,$$
with $\supp (\chi_m) \subset \mathcal{V}_m$ a local chart. Then, by Lemma \ref{CalculQhPartitionUnity} (in Appendix), for any $\psi \in D(q^{\hbar})$,
$$q_{\hbar} (\psi) = \sum_{m \geq 0} q_{\hbar}(\chi_m \psi) - \hbar^2 \sum_{m \geq 0} \Vert \psi \dd \chi_m  \Vert^2 \geq \sum_{m \geq 0} q_{\hbar}(\chi_m \psi) - C \hbar^2 \Vert \psi \Vert^2.$$
Since $$K = \lbrace b(q) \leq \tilde{b}_1 \rbrace$$ is compact, there is a $m_0>0$ such that, for $m > m_0$:
\begin{align}
q_{\hbar}(\chi_m \psi) &\geq \hbar \int \left( b(q) - \hbar^{1/4}C_0 \right) \vert \chi_m \psi \vert^{2} \dd q_g \\ 
\label{eq700519} q_{\hbar}(\chi_m \psi) &\geq \hbar (\tilde{b}_1-\hbar^{1/4}C_0) \Vert \chi_m \psi \Vert^2 \geq \hbar b_1 \Vert \chi_m \psi \Vert^2,
\end{align}
for $\hbar$ small enough.
For $0 \leq m \leq m_0$, we can work like in $\R^d$ using the charts, and we can find a new partition of unity on $\mathcal{V}_m$ such that
\begin{align}\label{qdrttttt}
\sum_{j = 0}^J \vert \chi_{m,j}^{\hbar} \vert ^2 =1, \quad \text{and} \quad \sum_{j = 0}^J \vert \dd \chi_{m,j}^{\hbar}(x) \vert^2 \leq C \hbar^{-2\alpha},
\end{align}
where $C>0$ does not depend on $m$, and with $$\supp (\chi_{m,j}^{\hbar}) \subset \mathcal{B}_{m,j} := \lbrace x : \vert x-z_{m,j} \vert \leq \hbar^{\alpha} \rbrace.$$
Thus we have for $0 \leq m \leq m_0$:
\begin{align}\label{eqstar0519}
q_{\hbar}(\chi_m \psi) \geq \sum_{j=0}^J q_{\hbar}(\chi_{m,j} \psi ) - C \hbar^{2-2\alpha} \Vert \chi_m \psi \Vert^2.
\end{align}
On each $\mathcal{B}_{m,j}$, we will approximate the magnetic field by a constant. Up to a gauge transformation, we can assume that the vector potential vanishes at $z_{m,j}$. In other words, we can find a smooth function $\varphi_{m,j}$ on $\mathcal{B}_{m,j}$ such that $$\tilde{\A}(z_{m,j})=0,$$
where $\tilde{\A} = \A + \nabla \varphi_{m,j}.$ The potential $\tilde{\A}$ defines the same magnetic field $\B$ as $\A$. Let us define
$$\A_{lin}(x) = \B (z_{m,j}). (x-z_{m,j}),$$
so that
\begin{align}\label{sqdrtttt}
\vert \tilde{\A}(x) - \A_{lin}(x) \vert \leq C \vert x- z_{m,j} \vert^2 \quad \text{on } \mathcal{B}_{m,j}.
\end{align}
Then if $\tilde{q}_{\hbar}$ denotes the quadratic form for the new potential $\tilde{\A}$, for $v \in \mathcal{C}_0^{\infty}(\mathcal{B}_{m,j})$,
$$\tilde{q}_{\hbar}(v) = q^{lin}_{\hbar}(v) + \Vert (\tilde{\A} - \A_{lin})v \Vert^2 + 2 \Re \langle (\tilde{\A} - \A_{lin})v , (i\hbar \nabla + \A_{lin})v \rangle,$$
and using (\ref{sqdrtttt}) and the Cauchy-Schwarz inequality,
$$\tilde{q}_{\hbar}(v) \geq q_{\hbar}^{lin}(v) -2C \Vert  \vert x -z_{m,j} \vert^2 v \Vert \sqrt{q_{\hbar}^{lin}(v)} .$$
We use $2 \vert a b \vert \leq \varepsilon^2 a^2 + \varepsilon^{-2} b^2$ to get:
\begin{align*}
\tilde{q}_{\hbar}(v) &\geq  \left( 1- C \hbar^{2\beta} \right) q_{\hbar}^{lin}(v) - C \hbar^{-2\beta} \Vert  \vert x - z_{m,j} \vert^2 v \Vert^2
\end{align*}
and so
\begin{align}
\tilde{q}_{\hbar}(v) \geq \left( 1- C \hbar^{2\beta} \right) q_{\hbar}^{lin}(v) - C \hbar^{4 \alpha-2\beta} \Vert  v \Vert^2.
\end{align}
Changing $\A$ into $\tilde{\A}$ amounts to conjugate the magnetic Laplacian by $e^{i\hbar^{-1} \varphi_{j,m}}$, so:
$$\tilde{q}_{\hbar}(v) = q_{\hbar}(e^{i \hbar^{-1} \varphi_{j,m}}v).$$ Hence, for any $v \in \mathcal{C}^{\infty}_0(\mathcal{B}_{m,j})$,
\begin{align}\label{eq730519}
q_{\hbar}(v) \geq \left( 1- C \hbar^{2\beta} \right) q_{\hbar}^{lin}(e^{-i\hbar^{-1} \varphi_{j,m}}v) - C \hbar^{4 \alpha-2\beta} \Vert  v \Vert^2.
\end{align}
$q_{\hbar}^{lin}(v)$ is the quadratic form associated to a constant magnetic field operator. Now, we approximate the metric with a flat one:
\begin{align}
q_{\hbar}^{lin}(v) & = \sum_{k,l} \int \vert g(x) \vert^{1/2} g^{kl}(x) (i \hbar \partial_k v + A^{lin}_k v)\overline{(i\hbar \partial_l v + A_l^{lin} v)} \dd x\\
& \geq \left( 1 - C \hbar^{\alpha} \right)\sum_{k,l} \int \vert g(z_{m,j}) \vert^{1/2} g^{kl}(z_{m,j}) (i \hbar \partial_k v + A_k^{lin} v)\overline{(i\hbar \partial_l v + A_l^{lin} v)} \dd x,\\
& = (1-C \hbar^{\alpha}) q_{\hbar}^{flat}(v). \label{flatone0519}
\end{align}
Hence, from (\ref{eqstar0519}) and (\ref{eq700519}) we get:
\begin{align*}
q_{\hbar}(\psi) &\geq \sum_{m=0}^{m_0} \sum_{j=0}^J  q_{\hbar}(\chi_{m,j}^{\hbar} \psi)  - \sum_{m=0}^{m_0} C \hbar^{2-2 \alpha} \Vert \chi_m \psi \Vert^2 - C \hbar^2 \Vert \psi \Vert^2 \\ &\quad + \sum_{m > m_0} \hbar (\tilde{b}_1-C_0 \hbar^{1/4}) \Vert \chi_m \psi \Vert^2,
\end{align*}
and using (\ref{eq730519}) and (\ref{flatone0519}):
\begin{align*}
q_{\hbar}(\psi) &\geq (1-C \hbar^{2 \beta})(1-C \hbar^{\alpha}) \sum_{m=0}^{m_0} \sum_{j=0}^J q_{\hbar}^{flat}(e^{-i \hbar^{-1} \varphi_{j,m}}\chi_{m,j}^{\hbar} \psi) \\ & \quad -  C \hbar^{4\alpha - 2 \beta} \sum_{m=0}^{m_0} \sum_{j=0}^J \Vert \chi_{m,j}^{\hbar} \psi \Vert^2  \\ & \quad  - (C \hbar^{2-2\alpha} + C \hbar^2 + C_0 \hbar^{5/4}) \Vert \psi \Vert^2 + \sum_{m > m_0} \hbar \tilde{b}_1 \Vert \chi_m \psi \Vert^2\\
& \geq (1-C \hbar^{2 \beta})(1-C \hbar^{\alpha}) \sum_{m=0}^{m_0} \sum_{j=0}^J  q_{\hbar}^{flat}(e^{-i \hbar^{-1} \varphi_{j,m}}\chi_{m,j}^{\hbar} \psi) + \sum_{m > m_0} \hbar \tilde{b}_1 \Vert \chi_m \psi \Vert^2 \\ & \quad  - ( C \hbar^{4 \alpha - 2 \beta} + C \hbar^{2-2\alpha} + C \hbar^2 + C_0 \hbar^{5/4}) \Vert \psi \Vert^2.
\end{align*}
Then
\begin{align}\label{eq02051900}
q_{\hbar}(\psi) &\geq  (1-C \hbar^{2 \beta})(1-C \hbar^{\alpha}) q_{\mathcal{L}} \left( (\chi_{j,m}\psi)_{j,m}, (\chi_m \psi)_{m> m_0} \right) - K(\hbar) \Vert \psi \Vert^2,
\end{align}
where $K(\hbar) = C \hbar^{4 \alpha - 2 \beta} + C \hbar^{2-2\alpha} + C \hbar^2 + C_0 \hbar^{5/4}$ and
\begin{align*}
q_{\mathcal{L}}(&(\psi_{j,m})_{j,m}, (\psi_m)_{m > m_0}) \\ &:= 
  \sum_{m=0}^{m_0} \sum_{j=0}^J  q_{\hbar}^{flat}(e^{i \hbar^{-1} \varphi_{j,m}}\psi_{j,m}) + (1-C \hbar^{2\beta})^{-1}(1-C \hbar^{\alpha})^{-1} \sum_{m > m_0} \hbar \tilde{b}_1 \Vert \psi_m \Vert^2
\end{align*}
is the quadratic form associated to $$\mathcal{L} = \left( \bigoplus_{m=0}^{m_0} \bigoplus_{j=0}^J \mathcal{L}_{m,j} \right) \oplus \left( \bigoplus_{m > m_0} \mathcal{L}_m \right),$$ where $\mathcal{L}_{m,j}$ is a Schrödinger operator with constant magnetic field acting on $\Ld(\mathcal{B}_{m,j})$, and $\mathcal{L}_m$ is the multiplication by $(1-C \hbar^{2\beta})^{-1}(1-C \hbar^{\alpha})^{-1} \hbar \tilde{b}_1$ acting on $\Ld(\mathcal{V}_m)$. 

We test inequality (\ref{eq02051900}) on the $N(\Lh, b_1 \hbar)$-dimensional space $V$ spanned by the $N(\Lh, b_1 \hbar)$ first eigenfunctions of $\Lh$ (Corresponding to eigenvalues $\leq \hbar b_1$). For $\psi \in V$,
\begin{small}
$$(1-C \hbar^{2 \beta})(1-C \hbar^{\alpha}) q_{\mathcal{L}}((\chi_{j,m} \psi)_{j,m}, (\chi_m \psi)_{m>m_0}) \leq  \left(  b_1 \hbar + K(\hbar) \right) \Vert \psi \Vert^2.$$
\end{small}
 Then, since $$\psi \mapsto ((\chi_{j,m}\psi)_{0 \leq m \leq m_0, 0 \leq j \leq J} , (\chi_m \psi)_{m > m_0})$$ is one-to-one, the space
$$ \left\lbrace ((\chi_{j,m} \psi)_{j,m},(\chi_m \psi)_m) \in \left( \bigoplus_{j,m} \Ld(\mathcal{B}_{m,j}) \right) \oplus \left( \bigoplus_{m>m_0} \Ld(\mathcal{V}_m) \right) ; \psi \in V \right\rbrace$$
is $N(\Lh, b_1 \hbar)$-dimensional, and the min-max principle yields to:
\begin{small}
$$N(\Lh, b_1 \hbar) \leq N \left( \mathcal{L}, (\hbar b_1 + K(\hbar))(1-C \hbar^{2\beta})^{-1}(1-C \hbar^{\alpha})^{-1} \right).$$
\end{small}
Since $\mathcal{L}_{m,j}$ is a magnetic Laplacian with constant magnetic field, we know that, for $\hbar$ small enough:
$$N(\mathcal{L}_{m,j} , \grandO(\hbar)) = \grandO(\hbar^{-d/2}), \quad 0 \leq m \leq m_0, \quad 0 \leq j \leq J,$$
and
$$N(\mathcal{L}_m, \hbar b_1 + o(\hbar)) = 0, \quad m > m_0.$$
With $\alpha = 3/8$ and $\beta = 1/8$, $K(\hbar)=o(\hbar)$, so we deduce:
$$N(\Lh, \hbar b_1) = \grandO(\hbar^{-d/2}).$$
\end{proof}

The same result holds for $\BNh$:

\begin{lemma}\label{rankBNh} Let $b_1 \in (0,\tilde{b}_1)$. There exists $C>0$ and $\hbar_0 >0$ such that
$$ \text{for all } \hbar \in (0, \hbar_0), \quad N(\BNh,  \hbar b_1) \leq C \hbar^{-d/2}.$$
\end{lemma}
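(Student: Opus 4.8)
The plan is to transfer the bound $N(\Lh,\hbar b_1)\leq C\hbar^{-d/2}$ of Lemma \ref{rank} to $\BNh$ by comparing the two operators on the relevant spectral subspace, using the normal form of Theorem \ref{ThmFormeNormale} together with the microlocalization results of Section \ref{SectionMicrolocalisation}. First I would fix $b_1'$ with $b_1<b_1'<\tilde b_1$ and observe, via Theorem \ref{comparisonOftheEigenvalues} (or directly via the equivalence of quadratic forms $(ii)$ of Theorem \ref{ThmFormeNormale} and the fact that $\Uh$ is microlocally unitary), that every eigenfunction $\psi$ of $\BNh$ with eigenvalue $\lambda\leq\hbar b_1$ is, up to $\grandO(\hbar^\infty)$, mapped by $\Uh$ to a quasimode of $\Lh$ at energy $\lambda+\grandO(\hbar^{r/2-\varepsilon})\leq\hbar b_1'$. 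Since $r\geq 3$, $\hbar^{r/2-\varepsilon}=o(\hbar)$, so these quasimodes live (up to $\grandO(\hbar^\infty)$) in the spectral subspace $\mathbf 1_{(-\infty,\hbar b_1']}(\Lh)$, whose dimension is $\grandO(\hbar^{-d/2})$ by Lemma \ref{rank}.

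The key steps, in order: (1) Let $V_\hbar\subset\Ld(\R^d)$ be the span of the eigenfunctions of $\BNh$ with eigenvalue $\leq\hbar b_1$; its dimension is exactly $N(\BNh,\hbar b_1)$ (possibly $+\infty$ a priori, so one should first argue the spectrum below $\hbar b_1$ is discrete, which follows since $\Lh^0$ has compact resolvent on the relevant microlocal region together with $(ii)$). (2) Apply $\Uh$ to $V_\hbar$: using $(i)$, for $\psi\in V_\hbar$ one has $\Uh^*\Lh\Uh\psi=\BNh\psi+\Rh\psi=\lambda\psi+\Rh\psi$, and by $(iii)$ combined with the microlocalization Theorem \ref{microloc2} (which confines $\psi$ to a region where $\weylsymbol(\Rh)=\grandO((\vert z\vert+\hbar^{1/2})^r)$ with $\vert z\vert\lesssim\hbar^\delta$, $\delta<1/2$), one gets $\Vert\Rh\psi\Vert\leq C\hbar^{r\delta'}\Vert\psi\Vert$ for some $r\delta'>1$ after choosing $\delta$ close to $1/2$; hence $\Uh\psi$ is an $o(\hbar)$-quasimode of $\Lh$ at energy $\lambda$. (3) By the spectral theorem, $\mathbf 1_{(-\infty,\hbar b_1']}(\Lh)\Uh\psi=\Uh\psi+\grandO(\hbar^\infty)\Vert\psi\Vert$, so $\Uh(V_\hbar)$ is, modulo $\grandO(\hbar^\infty)$, contained in $\mathrm{Ran}\,\mathbf 1_{(-\infty,\hbar b_1']}(\Lh)$. (4) Since $\Uh^*\Uh=I$ microlocally near $z=w=0$ and $V_\hbar$ is microlocalized there (Theorem \ref{microloc2}), $\Uh$ restricted to $V_\hbar$ is injective up to $\grandO(\hbar^\infty)$, so $\dim\Uh(V_\hbar)=\dim V_\hbar$ for $\hbar$ small. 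Combining (3) and (4): $N(\BNh,\hbar b_1)=\dim V_\hbar\leq N(\Lh,\hbar b_1')+o(1)\leq C\hbar^{-d/2}$ by Lemma \ref{rank}, and since the left side is an integer and $C\hbar^{-d/2}\to\infty$, absorbing the $o(1)$ gives the claim with a slightly larger $C$.

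An alternative, more self-contained route avoids $\Lh$ entirely: use Lemma \ref{reductionBNHdiagonale} to write $\spectre(\BNh)\cap(-\infty,\hbar b_1]\subset\bigcup_{\vert n\vert\leq n_{max}}\spectre(\BNh^{(n)})$, so $N(\BNh,\hbar b_1)\leq\sum_{\vert n\vert\leq n_{max}}N(\BNh^{(n)},\hbar b_1)$, and then bound each $N(\BNh^{(n)},\hbar b_1)$. Each $\BNh^{(n)}$ is a semiclassical pseudodifferential operator on $\Ld(\R^{d/2}_y)$ with principal symbol $F^{(n)}(w)=\hbar\sum_j\bbeta_j(w)(2n_j+1)+\grandO(\hbar^2)$, which after dividing by $\hbar$ has principal symbol $\sum_j\bbeta_j(w)(2n_j+1)$; this symbol is $\geq b_0>0$ everywhere (it equals $b^{[n]}(\varphi^{-1}(w))\geq b(\varphi^{-1}(w))\geq b_0$ on $V$ and $\geq\tilde b_1$ off $V$ by the extension (\ref{extentionH0})) and tends to a value $>b_1$ at infinity, so $N(\BNh^{(n)},\hbar b_1)=N(\hbar^{-1}\BNh^{(n)},b_1)$ is bounded by the standard Weyl-type counting bound $C\hbar^{-d/2}$ for a semiclassical pseudodifferential operator with confining principal symbol (Cwikel–Lieb–Rozenblum or a direct Dirichlet–Neumann bracketing argument on a box, exactly as in the proof of Lemma \ref{rank}). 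Summing over the finitely many $n$ with $\vert n\vert\leq n_{max}$ gives $N(\BNh,\hbar b_1)\leq(\#\{\vert n\vert\leq n_{max}\})\cdot C\hbar^{-d/2}=C'\hbar^{-d/2}$.

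The main obstacle in either approach is making the error estimates uniform over the whole spectral window $(-\infty,\hbar b_1]$ rather than for a fixed eigenvalue: in the first route this is why one needs Lemma \ref{microloc2} to hold uniformly in $\lambda\leq\hbar b_1$ (it does, since the cutoffs $\chi_0,\chi_1$ and the constants are chosen independently of the eigenpair), and why one must be careful that the $\grandO(\hbar^\infty)$ in step (4) does not degrade as $\dim V_\hbar$ grows — this is handled because the microlocalization is operator-level ($\psi=\chi_1^{d/2}(\hbar^{-2\delta}\Ih)\Op(\chi_0)\psi+\grandO(\hbar^\infty)$ with a uniform constant, applied to normalized $\psi$), so the estimate on $\Uh^*\Uh-I$ restricted to $V_\hbar$ is in operator norm. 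The second route sidesteps this by reducing to finitely many fixed model operators, and I would present that one as the cleaner proof, with the comparison argument relegated to a remark; the only real input it needs beyond what is already in the paper is the elementary counting bound $N\leq C\hbar^{-d/2}$ for a semiclassical operator with confining symbol, which is standard and was already used inside Lemma \ref{rank}.
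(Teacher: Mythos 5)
Your second route is, up to presentation, exactly the paper's argument: the paper bounds $\BNh\geq(1-\zeta)\Lh^0\geq(1-\zeta)\hbar\Bh$ with $\Bh=\Op(\hat b)$ and then invokes the min-max principle and Weyl asymptotics for $\Bh$ to get $\grandO(\hbar^{-d/2})$. Read literally, the paper's ``min-max'' step needs exactly the unpacking you supply: on $\Ld(\R^d)$ the operator $\hbar\Bh\otimes\mathrm{Id}$ has infinite-dimensional spectral subspaces, so the bound $N(\BNh,\hbar b_1)\leq N(\Bh,(1-\zeta)^{-1}b_1)$ must be understood block-by-block through Lemma~\ref{reductionBNHdiagonale} and the inclusion~\eqref{decompositionSpectreBNh}, namely $N(\BNh,\hbar b_1)=\sum_{|n|\leq n_{\max}}N(\BNh^{(n)},\hbar b_1)$ with each $\BNh^{(n)}\geq(1-\zeta)\hbar\Bh$ as operators on $\Ld(\R^{d/2}_y)$; you make this explicit, and your version (with confining symbol $\hat b^{[n]}(w)\geq b_0$, $\geq\tilde b_1$ off $V$ by~\eqref{extentionH0}) is a correct and slightly more detailed presentation of the same idea, and differs only in that the paper bounds every $\BNh^{(n)}$ by the single operator $\Bh$ whereas you keep the $(2n_j+1)$-weights.

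One caution on your first route: Theorem~\ref{comparisonOftheEigenvalues} is proved in Section~7 \emph{using} both Lemma~\ref{rank} and Lemma~\ref{rankBNh}, so invoking it here would be circular. Your parenthetical alternative (going directly through $(i)$--$(v)$ of Theorem~\ref{ThmFormeNormale}, the $\grandO((\vert z\vert+\hbar^{1/2})^r)$ bound on the remainder's symbol, and the microlocalization of Theorem~\ref{microloc2}) does avoid the circularity, and your quasimode estimate $\Vert\Rh\psi\Vert=\grandO(\hbar^{r\delta})=o(\hbar)$ for $\delta$ close to $1/2$ is correct; but that whole detour reproduces a substantial fraction of Section~7 just to count eigenvalues, whereas the paper (and your second route) gets the bound in three lines. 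Your instinct to present the second route as the proof and relegate the first to a remark is the right call.
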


\begin{proof}
By Lemma \ref{perturbationH0}, we have:
$$\langle \BNh \psi, \psi \rangle \geq (1-\zeta) \langle \Lh^0 \psi,\psi \rangle \geq (1-\zeta) \hbar \langle \Bh \psi, \psi \rangle,$$
with $\Bh = \Op(\hat{b})$.
Using the min-max principle, it follows that
$$N( \BNh , \hbar b_1) \leq N ( \Bh, (1-\zeta)^{-1} b_1),$$
and using Weyl estimates (\cite{Sjo} Chapter 9, or \cite{Ker}), we get
$$N(\Bh, (1-\zeta)^{-1} b_1) = \grandO( \hbar^{-d/2}).$$
\end{proof}

\section{\textbf{Comparison of the spectra of} $\Lh$ \textbf{and} $\BNh$}

\subsection{Proof of Theorem \ref{comparisonOftheEigenvalues}}

We denote $$\lambda_1(\hbar) \leq \lambda_2(\hbar) \leq ...$$ the smallest eigenvalues of $\Lh$ and $$\nu_1(\hbar) \leq \nu_2(\hbar) \leq ...$$ the smallest eigenvalues of $\BNh$. The goal of this section is to prove the following theorem, using the results of section \ref{SectionMicrolocalisation}.

\begin{theorem}\label{spectralcomparison}
If $b_1<\tilde{b}_1$ and $\delta \in (0,1/2)$, then
$$\lambda_n(\hbar) = \nu_n(\hbar) + \grandO(\hbar^{\delta r}),$$
uniformly in $n$ such that $\lambda_n(\hbar) \leq \hbar b_1$ and $\nu_n(\hbar) \leq \hbar b_1$.
\end{theorem}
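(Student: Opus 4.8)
The plan is to compare $\Lh$ and $\BNh$ via the Fourier integral operator $\Uht = V_{\hbar}\Uh$ of Theorem \ref{ThmFormeNormale}, using the microlocalization results of section \ref{SectionMicrolocalisation} to control everything happening away from $\Sigma$. The key identity is $\Uht^*\Lh\Uht = \BNh + \Rh$ with $\weylsymbol(\Rh) = \grandO((|z|+\hbar^{1/2})^r)$ near $w=0$, together with the microlocal unitarity $\Uht^*\Uht = I$ near $(z,w)=0$ and $\Uht\Uht^* = I$ near $(q_0,A_{q_0})$. First I would take a normalized eigenpair $(\lambda,\psi)$ of $\Lh$ with $\lambda\le\hbar b_1$. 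By Theorem \ref{microloc} (and the Agmon estimates of Theorem \ref{Agmon}), $\psi = \chi_1(\hbar^{-2\delta}\Lh)\chi_0(q)\psi + \grandO(\hbar^\infty)$, so $\psi$ is microlocalized in $\{|p-A(q)|^2 \le C\hbar^{2\delta}\}\cap\{q\in K_\varepsilon\}$, a region where $\Uht\Uht^*=I$ applies. Hence $\phi := \Uht^*\psi$ satisfies $\Uht\phi = \psi + \grandO(\hbar^\infty)$ and $\|\phi\| = \|\psi\| + \grandO(\hbar^\infty)$.

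Next I would estimate $(\BNh - \lambda)\phi$. Writing $(\BNh-\lambda)\phi = (\Uht^*\Lh\Uht - \Rh - \lambda)\phi = \Uht^*(\Lh-\lambda)\psi + \grandO(\hbar^\infty) - \Rh\phi$, the first term is $\grandO(\hbar^\infty)$. For the remainder term, the point is that $\phi = \Uht^*\psi$ inherits microlocalization: from Theorem \ref{microloc2}-type reasoning (applied after transport, or by a direct Egorov argument on $\Uht^*\chi_1(\hbar^{-2\delta}\Lh)\chi_0\Uht$) one gets $\phi = \chi_1(\hbar^{-2\delta}\Ih^{(1)})\cdots\chi_1(\hbar^{-2\delta}\Ih^{(d/2)})\Op(\chi_0(w))\phi + \grandO(\hbar^\infty)$, so $\phi$ lives in $\{|z_j|^2 \lesssim \hbar^{2\delta},\ w \text{ near } 0\}$. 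On that set $\weylsymbol(\Rh) = \grandO((|z|+\hbar^{1/2})^r) = \grandO(\hbar^{\delta r})$ (using $2\delta < 1$ so the $|z|$ term dominates $\hbar^{1/2}$ only up to a constant — more carefully, $(|z|+\hbar^{1/2})^r \le C(\hbar^{\delta}+\hbar^{1/2})^r = \grandO(\hbar^{\delta r})$ since $\delta<1/2$). By the Calderón–Vaillancourt theorem applied to $\Rh$ composed with the microlocal cutoffs, $\|\Rh\phi\| \le C\hbar^{\delta r}\|\phi\| + \grandO(\hbar^\infty)$. Therefore $\|(\BNh-\lambda)\phi\| \le C\hbar^{\delta r}\|\phi\|$, and the spectral theorem gives $\dist(\lambda,\spectre(\BNh)) \le C\hbar^{\delta r}$.

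To upgrade this from "each $\lambda_n$ is near the spectrum of $\BNh$" to the matched asymptotics $\lambda_n = \nu_n + \grandO(\hbar^{\delta r})$, I would run a standard spectral-gap / min-max argument in both directions. Using Lemmas \ref{rank} and \ref{rankBNh}, the number of eigenvalues of $\Lh$ and of $\BNh$ below $\hbar b_1$ is $\grandO(\hbar^{-d/2})$, so the quasimodes produced above are "quasi-orthogonal up to $\grandO(\hbar^\infty)$" in controllable finite-dimensional spaces; applying the construction to the span of the first $N(\Lh,\hbar b_1)$ eigenfunctions and invoking the min-max principle yields $\nu_n \le \lambda_n + C\hbar^{\delta r}$ uniformly. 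The reverse inequality $\lambda_n \le \nu_n + C\hbar^{\delta r}$ is symmetric: one runs the same argument starting from eigenfunctions of $\BNh$, using $\Uht$ (rather than $\Uht^*$) together with the microlocalization Theorem \ref{microloc2} and $\Uht^*\Uht = I$ near $(z,w)=0$ to transport back to $\Ld(M)$, and the identity $\Uht(\BNh+\Rh)\Uht^* = \Lh$ microlocally. Combining the two inequalities finishes the proof.

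**Main obstacle.** The delicate point is not any single estimate but making the microlocal bookkeeping rigorous and \emph{uniform in $n$}: one must check that the cutoffs $\chi_1(\hbar^{-2\delta}\Ih^{(j)})$ and $\Op(\chi_0(w))$ genuinely localize $\Uht^*\psi$ to the region where $\weylsymbol(\Rh)=\grandO(\hbar^{\delta r})$ — this requires an Egorov-type commutation of $\Uht$ past the functional calculus of $\Lh$, valid only microlocally, and one must verify the error terms stay $\grandO(\hbar^\infty)$ rather than merely $\grandO(\hbar^\infty)$-per-eigenfunction, which is where the polynomial bound on the number of eigenvalues (Lemmas \ref{rank}, \ref{rankBNh}) enters. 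The fact that $\BNh$ is globally defined and behaves like $\hbar\Bh$ at infinity (Proposition \ref{perturbationH0}) is what lets the argument close without worrying about the behavior of $\Rh$ far from $\Sigma$.
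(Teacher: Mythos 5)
Your argument is correct and follows essentially the same route as the paper's proof: microlocalize the eigenfunctions of $\Lh$ (Theorem \ref{microloc}) onto the region where $\Uht\Uht^*=I$, transport by $\Uht^*$, use the symbol estimate on the remainder (which is $\grandO(\hbar^{\delta r})$ on the set $\{|z|^2\lesssim\hbar^{2\delta}\}$ because $\delta<1/2$), invoke the min-max principle on the span of the first $n$ transported eigenfunctions, and use the polynomial rank bounds of Lemmas \ref{rank} and \ref{rankBNh} to keep the $\grandO(\hbar^\infty)$ errors uniform; then run the symmetric argument for the reverse inequality with Theorem \ref{microloc2} and Lemma \ref{rankBNh}. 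The intermediate quasimode observation and the explicit appeal to Calder\'on--Vaillancourt are harmless extra commentary, and your estimate of $\Rh\phi$ directly (rather than the paper's reformulation via $\Uh\Rh\Uh^*$ acting on $V_\hbar^*\tilde\psi$) is justified because Theorem \ref{BNh}(iii) supplies symbol estimates for both $\Rh$ and $\Uh\Rh\Uh^*$.
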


Together with Theorem \ref{wellexpansionBNh}, this theorem concludes the proofs of Theorems \ref{comparisonOftheEigenvalues} and \ref{expansionOftheEigenvalues}.

\begin{proof}
We will prove that $\nu_n(\hbar) \leq \lambda_n(\hbar) + \grandO(\hbar^{\delta r})$, the other inequality being similar. Let $1 \leq n \leq N(\Lh,\hbar b_1)$, and let us denote $\psi_{1,\hbar}, ... , \psi_{n,\hbar}$ the normalized eigenfunctions associated to the first eigenvalues of $\Lh$. We also denote $$V_{n,\hbar} = \mathsf{span} \lbrace \chi_1(\hbar^{-2 \delta} \Lh) \chi_0(q) \psi_{j,\hbar} : 1 \leq j \leq n \rbrace,$$ where $\chi_0$ and $\chi_1$ are defined in Theorem \ref{microloc}. We have the normal form:
\begin{align}\label{eqnormalform0405}
\Uht^* \Lh \Uht = \BNh + R_{\hbar}, 
\end{align}
where
\begin{align*}
\Uht = V_{\hbar} \Uh, \quad \text{is given by (\ref{quantifsymplecto}) and Theorem \ref{BNh}.}
\end{align*}
We will use the min-max principle. For $\psi \in \mathsf{span}_{1 \leq j \leq n} \psi_{j,\hbar},$ we denote $$\tilde{\psi} = \chi_1(\hbar^{-2 \delta} \Lh) \chi_0(q) \psi \quad \in V_{n,\hbar}$$
Such a $\psit$ is microlocalized on $ \Omega_{\hbar} \subset U \subset T^*M,$ where
$$\Omega_{\hbar} = \lbrace (q,p) \in T^*M : \vert p - A(q) \vert^2 < c \hbar^{2\delta}, q \in \Omega \rbrace.$$ (Indeed, the symbol of $\chi_1(\hbar^{-2 \delta} \Lh)$ is $\grandO(\hbar^{\infty})$ where $\chi_1(\hbar^{-2\delta} \vert p - A(q) \vert^2) \equiv 0$). Thus, since $V_{\hbar} V_{\hbar}^* = I$ microlocally on $U$ (\ref{microinv2}) and $U_{\hbar}$ is unitary, we deduce from (\ref{eqnormalform0405}) that:
\begin{align}\label{formulaBNhUhtPsit}
\langle \BNh \Uht^{*} \psit, \Uht^{*} \psit \rangle &= \langle \Lh \psit,  \psit \rangle - \langle R_{\hbar} \Uht^* \tilde{\psi}, \Uht^* \tilde{\psi} \rangle + \grandO(\hbar^{\infty})\Vert \psit \Vert^2,
\end{align}
On the first hand, by Theorem \ref{microloc}, we can change $\psit$ into $\psi$ up to an error of order $\hbar^{\infty}$. Indeed, by Lemma \ref{rank}, the estimates of Theorem \ref{microloc} remain true for $\psi$. We get:
\begin{align*}
\langle \Lh \psit , \psit \rangle = \langle \Lh \psi, \psi \rangle + \grandO(\hbar^{\infty}) \Vert \psi \Vert^2 \leq (\lambda_n(\hbar)  + \grandO(\hbar^{\infty})) \Vert \psi \Vert^2.
\end{align*}
On the other hand, the remainder is:
\begin{align*}
\langle R_{\hbar} \Uht^* \tilde{\psi}, \Uht^* \tilde{\psi} \rangle = \langle \Uh R_{\hbar} \Uh^* V_{\hbar}^* \psit, V_{\hbar}^* \psit \rangle.
\end{align*}
The function $V_{\hbar}^* \psit$ is microlocalized in $$\mathcal{V}_{\hbar} = \lbrace
(w,z) : w \in V, \vert z \vert^2 \leq c \hbar^{2\delta} \rbrace,$$
because $V_{\hbar}$ is a Fourier integral operator with phase function associated to the canonical transformation $\Phi$, which is sending $\Omega_{\hbar}$ (where $\psit$ is microlocalized) on $\mathcal{V}_{\hbar}$. 
Moreover, the symbol of the pseudo-differential operator $\Uh \Rh \Uh^*$ on $V$ is $\grandO( (\hbar + \vert z \vert^2)^{r/2})$ (Theorem \ref{BNh}), so we get:
$$\Uh R_{\hbar} \Uh^* V_{\hbar}^* \psit = \grandO(\hbar^{ \delta r}).$$
Thus equation (\ref{formulaBNhUhtPsit}) yields to:
$$ \langle \BNh \Uht^{*} \psit, \Uht^{*} \psit \rangle \leq ( \lambda_n(\hbar) + \grandO(\hbar^{\delta r}) )\Vert \Uht^* \psit \Vert^2,$$
for all $\psit \in V_{n,\hbar}$. Since $V_{n,\hbar}$ is $n$-dimensional, the min-max principle gives
$$\nu_n(\hbar) \leq \lambda_n(\hbar) + \grandO(\hbar^{\delta r}).$$
The same arguments give the opposite inequality, replacing Theorem \ref{microloc} and Lemma \ref{rank} by Theorem \ref{microloc2} and Lemma \ref{rankBNh}.
\end{proof}

\subsection{Proof of Corollary \ref{WeylEstimates}}

Let us prove the Weyl estimates stated in Corollary \ref{WeylEstimates}. The proof relies on the classical Weyl asymptotics for pseudo-differential operators with elliptic principal symbol (\cite{Sjo} Chapter 9, \cite{Ker} Appendix).
Let us first prove the Weyl estimates for the Normal form. For any $n \in \N^{d/2}$, $\BNh^{(n)}$ is a pseudo-differential operator with principal symbol $$\hbar \hat{b}^{[n]}(w)= \hbar \sum_{j=1}^{d/2} (2n_j+1) \widehat{\beta}_j(w).$$ Note that $$V_n := \lbrace \hat{b}^{[n]}(w) \leq b_1 \rbrace$$
is empty for all but finitely many $n$. For these $n$, the G$\overset{\circ}{\text{a}}$rding inequality gives
$$\langle \BNh^{(n)} \psi, \psi \rangle \geq \hbar (b_1 - c \hbar) \Vert \psi \Vert, \quad \forall \psi \in \mathcal{S}(\R^{d/2}),$$
so that
$$N( \BNh^{(n)}, b_1 \hbar) = N( \frac{1}{\hbar} \BNh^{(n)}, [b_1 - c \hbar, b_1])$$
which is $o(\hbar^{-d/2})$ by the classical Weyl asymptotics. For the other finitely many $n$,
$$V_n \subset \lbrace \hat{b}(w) \leq b_1 \rbrace$$
is a compact set with positive volume and thus the classical Weyl asymptotics gives
$$N \left( \BNh^{(n)} , b_1 \hbar \right) = N \left( \frac{1}{\hbar} \BNh^{(n)} , b_1 \right) \sim \frac{1}{(2\pi \hbar)^{d/2}} \mathrm{Vol} \left( V_n \right).$$
Using $$\spectre (\BNh) = \bigcup_n \spectre (\BNh^{(n)}),$$ we deduce that
$$N( \BNh , b_1 \hbar) \sim \frac{1}{(2\pi \hbar)^{d/2}} \sum_n \mathrm{Vol} \left( V_n \right).$$
Moreover, 
$$\mathrm{Vol}(V_n) = \int_{V_n} \dd y \dd \eta = \int_{\varphi^{-1}(V_n)} \varphi^*( \dd y \dd \eta),$$
where $\varphi$ is defined in Theorem \ref{symplectomorphism}. Since $\varphi$ is a symplectomorphism, we have $$B = \varphi^*( \dd \eta \wedge \dd y )$$ and thus
$$\frac{B^{d/2}}{(d/2)!} = \frac{1}{(d/2)!} \varphi^*( (\dd \eta \wedge \dd y) ^{d/2}) = \varphi^*(\dd y \dd \eta).$$
Hence
$$\mathrm{Vol}(V_n) = \int_{b^{[n]}(q) \leq b_1} \frac{B^{d/2}}{(d/2)!},$$
so that
$$N(\BNh, b_1 \hbar) \sim \frac{1}{(2\pi \hbar)^{d/2}} \sum_{n \in \N^{d/2}} \int_{b^{[n]}(q) \leq b_1} \frac{B^{d/2}}{(d/2)!},$$
where the sum is finite. It remains to compare $$N_1 := N(\BNh, b_1 \hbar) \quad \text{and} \quad N_2 := N(\Lh, b_1 \hbar).$$ If we apply Theorem \ref{comparisonOftheEigenvalues} with some $b_1 + \delta > b_1$, we get a $c>0$ such that for $\hbar$ small enough,
$$ N( \BNh, \hbar b_1 - c \hbar^{r/2- \varepsilon}) \leq N_2 \leq N(\BNh, \hbar b_1 + c \hbar^{r/2-\varepsilon}),$$
so:
$$\vert N_1 - N_2 \vert \leq N( \BNh, [\hbar b_1 -c \hbar^{r/2- \varepsilon}, \hbar b_1 + c \hbar^{r/2- \varepsilon}]).$$
Classical Weyl asymptotics gives
$$N(\BNh^{(n)}, [\hbar b_1 -c \hbar^{r/2- \varepsilon}, \hbar b_1 + c \hbar^{r/2- \varepsilon}]) = o(\hbar^{-d/2}),$$
for any $n \in \N^{d/2}$, so $\vert N_1 - N_2 \vert = o(\hbar^{-d/2})$, and the proof is complete.

\section{\textbf{The case} $r_0 = \infty$}

If $r_0 = \infty$ (where $r_0$ is defined in (\ref{defrzero})), there is no resonances:
\begin{align}
\sum_{j=1}^{d/2} \alpha_j \beta_j(q_0) \neq 0, \quad \forall \alpha \in \N^{d/2}, \alpha \neq 0.
\end{align}
Of course, we can take any \textit{finite} $r \geq 3$, and construct the corresponding normal form. From Theorem \ref{expansionOftheEigenvalues} we deduce that
$$\forall r \geq 4, \forall j \geq 1, \lambda_j(\hbar) = \hbar b_0 +  \sum_{k=4}^{r-1} c_{j,k} \hbar^{k/2} + \grandO(\hbar^{r/2}),$$
so we get a complete expansion of $\lambda_j(\hbar)$ in powers of $\hbar^{1/2}$. However, the normal form depends on $r$. A natural question is : Could we construct a normal form which does not depend on $r$ ? The answer is yes, but we need to restrict to lower energies. Let us describe this construction. 

The reduction of the classical Hamiltonian does not depend on $r$, so there is nothing to change. The first problem appear with the formal normal form (Theorem \ref{algtheorem}). The problem is that the neighborhood $V$ on which the normal form is valid must reduce as $r$ goes to infinity. So we slightly change our definition of the space of formal series $\grandO_N (N \geq 0)$. Since the degree of a formal series $$\tau \in \mathcal{E} = \mathcal{C}^{\infty}(\R^d_w)[[x,\xi,\hbar]]$$
depend on $w$, we define $\grandO_N$ to be the set of formal series with valuation at least $N$ \textit{on a neighborhood of} 0. Then this neighborhood might go to zero as $N$ grows. Then the proof of Theorem (\ref{algtheorem}) remains true for $r= \infty$, and we get:

\begin{theorem}\label{algtheorem2}
If $\gamma \in \grandO_3$, there exist $\tau, \kappa \in \grandO_3$ such that:\\

$\bullet \quad e^{\frac{i}{\hbar} \ad_{\tau}} ( H^0 + \gamma )= H^0 + \kappa,$\\

$\bullet \quad [\kappa, \vert z_j \vert^2] = 0 \quad \text{for } 1 \leq j \leq d/2.$\\
\end{theorem}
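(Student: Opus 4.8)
The plan is to run the inductive scheme in the proof of Theorem \ref{algtheorem} without ever stopping, i.e.\ to carry it out for all degrees $N\ge 3$ instead of only $3\le N\le r-1$. The only place where the restriction $r\le r_0$ was used in that proof is in solving the cohomological equation (\ref{eqFormelReccurrence2}), namely $R_N=K_N+T(\tau')+\grandO_{N+1}$: there one divides by $\langle\alpha-\gamma,\hat\beta(w)\rangle$ for each non-diagonal monomial $z^{\alpha}\bar z^{\gamma}\hbar^{\ell}$ occurring in $R_N-K_N$, and this was licit because $0<|\alpha-\gamma|<r\le r_0$ forced, via (\ref{fffffrrr}), that $\langle\alpha-\gamma,\hat\beta\rangle$ never vanishes on $\Omega$. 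Every other ingredient — the action (\ref{TActsOnMonomials}) of $T$, the estimate (\ref{OrdreEtCommutateurs}), and the choice $K_N=\sum_{\alpha=\gamma}r_{\alpha\gamma\ell}\,\vert z\vert^{2\alpha}\hbar^{\ell}$ — is insensitive to the value of $r$.

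Now I use $r_0=\infty$. At the bottom of the well $w=0$ one has $\hat\beta(0)=\beta(q_0)$, so $\langle\alpha-\gamma,\hat\beta(0)\rangle=\langle\alpha-\gamma,\beta(q_0)\rangle\neq0$ for \emph{every} $\alpha\neq\gamma$ in $\N^{d/2}$, since $r_0=\infty$ means there is no nonzero integer relation between the $\beta_j(q_0)$. As $\hat\beta_j$ is smooth, each such pair $(\alpha,\gamma)$ comes with a neighborhood $W_{\alpha,\gamma}$ of $0$ on which $\langle\alpha-\gamma,\hat\beta(w)\rangle$ stays bounded away from $0$. Only finitely many $(\alpha,\gamma,\ell)$ enter at a fixed degree $N$, so $W_N:=\bigcap_{|\alpha-\gamma|\le N}W_{\alpha,\gamma}$ is still a neighborhood of $0$ — but these neighborhoods may shrink as $N\to\infty$, which is exactly why $\grandO_N$ has been re-defined as ``valuation $\ge N$ on some neighborhood of $0$''. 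On $W_N$ I define $c_{\alpha\gamma\ell}(w)=r_{\alpha\gamma\ell}(w)/\langle\alpha-\gamma,\hat\beta(w)\rangle$ exactly as before (multiplied, if global smoothness in $w$ is wanted, by a cutoff equal to $1$ near $0$ and supported in $W_{\alpha,\gamma}$), solve (\ref{eqFormelReccurrence2}), and iterate over all $N\ge3$. Collecting the pieces, $\tau:=\sum_{N\ge3}\tau_N'$ and $\kappa:=\sum_{N\ge3}K_N$ are well-defined formal series in $\grandO_3$, each degree receiving only finitely many contributions.

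It then remains to check that $e^{\frac{i}{\hbar}\ad_{\tau}}(H^0+\gamma)=H^0+\kappa$ makes sense and holds. Since $\tau\in\grandO_3$, (\ref{OrdreEtCommutateurs}) gives $\frac{i}{\hbar}\ad_{\tau}:\grandO_N\to\grandO_{N+1}$, so the exponential series $\sum_{n\ge0}\frac{1}{n!}\big(\frac{i}{\hbar}\ad_{\tau}\big)^{n}$ converges for the $\grandO_\bullet$-filtration; the claimed identity holds in the sense that both sides agree modulo $\grandO_{N+1}$ for every $N$, because this is true at each step of the induction and the cutoff errors introduced along the way vanish near $0$ and hence lie in $\grandO_M$ for all $M$. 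Finally $[\kappa,\vert z_j\vert^2]=0$ for $1\le j\le d/2$ because each $K_N$ is a polynomial in $\vert z_1\vert^2,\dots,\vert z_{d/2}\vert^2$ and $\hbar$, exactly as in Theorem \ref{algtheorem}. The only genuinely new point, and the mild technical obstacle, is the bookkeeping with the shrinking domains: one must make sure the Moyal product, $\ad$, the filtration $\grandO_\bullet$ and (\ref{OrdreEtCommutateurs}) still behave correctly on formal series whose degree-$N$ coefficients are smooth only on $W_N$, and that the cutoff errors truly lie in every $\grandO_M$, so that they never obstruct the induction.
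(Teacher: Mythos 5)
Your proposal is correct and is essentially the paper's own argument: the paper proves Theorem \ref{algtheorem2} by redefining $\grandO_N$ to mean ``valuation at least $N$ on some (possibly $N$-dependent) neighborhood of $0$'' and then asserting that the inductive scheme of Theorem \ref{algtheorem} runs for all $N\ge 3$. You have simply made explicit the two points the paper leaves implicit — that the non-resonance $r_0=\infty$ at $w=0$ gives, for each fixed $(\alpha,\gamma)$, a neighborhood on which $\langle\alpha-\gamma,\hat\beta(w)\rangle$ is bounded away from zero, and that the resulting domains shrink with $N$, which is precisely what the modified $\grandO_N$ accommodates.
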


Then we can quantize this result exactly as in Theorem \ref{BNh}, and we get:

\begin{theorem}
For $\hbar \in (0,\hbar_0]$ small enough, there exist a unitary operator $$\Uh : \Ld (\R^d ) \rightarrow \Ld (\R^d),$$ a smooth function $f^{\star}(w,I_1,...,I_{d/2},\hbar)$, and a pseudodifferential operator $\Rh$ such that:
\begin{align*}
&(i)\quad \Uh^* \Lhc \Uh = \Lh^0 + \Op f^{\star}(w, \Ih^{(1)}, ..., \Ih^{(d/2)}, \hbar) + \Rh,\\
&(ii)\quad f^{\star} \text{ has an arbitrarily small compact } (I_1,...,I_{d/2},\hbar)\text{-support (containing 0),}\\
&(iii)\quad \forall N \geq 3, \quad \symbolseries (\Rh) \in \grandO_N \quad \text{and} \quad \symbolseries (\Uh \Rh \Uh^*) \in \grandO_N.
\end{align*}
with $\Ih^{(j)} = \Op( \vert z_j \vert^2 )$ and $\Lh^0 = \Op ( H^0)$. We call $$\BNh = \Lh^0 + \Op f^{\star}(w, \Ih^{(1)}, ..., \Ih^{(d/2)}, \hbar)$$
the normal form, and $\Rh$ the remainder.
\end{theorem}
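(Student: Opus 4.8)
The plan is to follow, almost verbatim, the proof of Theorem~\ref{BNh}, substituting the infinite-order formal normal form of Theorem~\ref{algtheorem2} for Theorem~\ref{algtheorem}, and replacing the single truncation at order $r$ by a Borel summation together with the enlarged spaces $\grandO_N$ (formal series whose valuation is $\geq N$ only on \emph{some} neighbourhood of $w=0$, which is allowed to shrink as $N\to\infty$).

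First I would observe, exactly as in Theorem~\ref{BNh}, that $\Lhc$ defined by (\ref{quantifsymplecto}) has total symbol $\sigma_\hbar = \hat H + \hbar^2\tilde{r}_\hbar$ on $V\times B_z(\varepsilon)$, so $\sigma_\hbar = H^0 + r_\hbar$ with $\gamma := [r_\hbar]\in\grandO_3$. Applying Theorem~\ref{algtheorem2} to this $\gamma$ yields $\tau,\kappa\in\grandO_3$ with $e^{\frac{i}{\hbar}\ad_\tau}(H^0+\gamma) = H^0+\kappa$ and $[\kappa,|z_j|^2]=0$ for all $j$. By (\ref{AdZj2=}) the series $\kappa$ is a series in $(|z_1|^2,\dots,|z_{d/2}|^2,\hbar)$; after the usual change of coefficients it becomes a $\star$-series $\sum c^\star_{l,m}(w)(|z_1|^2)^{\star m_1}\cdots(|z_{d/2}|^2)^{\star m_{d/2}}\hbar^l$. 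By Borel's lemma I would then fix, once and for all, a smooth function $f^\star(w,I_1,\dots,I_{d/2},\hbar)$ with this Taylor series and with an arbitrarily small compact $(I_1,\dots,I_{d/2},\hbar)$-support containing $0$ — this is exactly point~(ii) — together with a smooth compactly supported $c(w,z,\hbar)$ whose Taylor series is $\tau$. Setting $\Uh := e^{-\frac{i}{\hbar}\Op(c)}$ (unitary, as in Theorem~\ref{BNh}), $\Lh^0 = \Op(H^0)$, $\BNh := \Lh^0 + \Op f^\star(w,\Ih^{(1)},\dots,\Ih^{(d/2)},\hbar)$ and $\Rh := \Uh^*\Lhc\Uh - \BNh$ (a pseudodifferential operator by the Egorov theorem), point~(i) holds by definition; crucially $\Uh$, $\BNh$ and $\Rh$ are produced by a single construction and do not depend on $N$.

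To prove (iii), I would argue for each fixed $N\geq 3$ separately. Taylor's formula with integral remainder, exactly as in the proof of Theorem~\ref{BNh}, gives
\begin{align*}
e^{\frac{i}{\hbar}\Op(c)}\Op(H^0+r_\hbar)e^{-\frac{i}{\hbar}\Op(c)} = \sum_{n=0}^{N-1}\frac{1}{n!}\ad_{i\hbar^{-1}\Op(c)}^{n}\Op(H^0+r_\hbar) + \mathcal{R}_{N},
\end{align*}
where, since $\ad_{i\hbar^{-1}\Op(c)}^{N}:\mathcal{E}\to\grandO_N$ by (\ref{OrdreEtCommutateurs}), the Egorov theorem shows $\symbolseries(\mathcal{R}_N)\in\grandO_N$, while the Taylor series of the finite sum is $\sum_{n=0}^{N-1}\frac{1}{n!}\ad_{i\hbar^{-1}\tau}^{n}(H^0+\gamma) = e^{\frac{i}{\hbar}\ad_\tau}(H^0+\gamma) + \grandO_N = H^0 + \kappa + \grandO_N$. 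By the defining property of $f^\star$ and the compatibility of the quantization with the Moyal product, this says precisely $\symbolseries(\Rh)\in\grandO_N$; as $N\geq 3$ is arbitrary, the first half of (iii) follows, and applying $e^{\frac{i}{\hbar}\ad_\tau}$ (again via Egorov) gives $\symbolseries(\Uh\Rh\Uh^*)\in\grandO_N$ for every $N$ as well.

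The main point to get right — and the reason this is not a cost-free corollary of Theorem~\ref{BNh} — is the bookkeeping of the neighbourhoods. At the $N$-th step of the formal construction one divides by $\langle\alpha-\gamma,\hat{\beta}(w)\rangle$ for exponents with $|\alpha-\gamma|\leq N$, and although $r_0=\infty$ guarantees every such combination is nonzero at $w=0$, the neighbourhood of $w=0$ on which it stays nonzero depends on $\alpha-\gamma$, hence on $N$; this is exactly what forces the weakened definition of $\grandO_N$. Consequently the conclusion "$\symbolseries(\Rh)\in\grandO_N$ for all $N$" only controls the Taylor expansion of the symbol of $\Rh$ on a shrinking neighbourhood of $w=0$ (at $w=0$ itself it vanishes to infinite order in $(z,\hbar)$) — which is precisely why, as announced just before the statement, the resulting $N$-independent normal form is only usable at the very lowest energies. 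One should also check that $\Uh$ and $\BNh$ genuinely do not depend on $N$: this is automatic, since $c$ and $f^\star$ are each obtained by a single application of Borel's lemma before $N$ ever enters the argument.
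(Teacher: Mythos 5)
Your proposal is correct and follows exactly the route the paper intends (it says only ``we can quantize this result exactly as in Theorem~\ref{BNh}''). You have usefully made explicit the two points that the paper leaves implicit: that $\Uh$, $f^\star$ and hence $\Rh$ are fixed once and for all by a single Borel summation before $N$ enters, so condition~(iii) is a statement about one operator rather than an $N$-indexed family; and that the enlarged $\grandO_N$ (valuation $\geq N$ only on some $N$-dependent neighbourhood of $w=0$) is precisely what makes the formal non-resonance at $w=0$ usable to all orders, at the expected cost that the resulting control of $\Rh$ degenerates away from $w=0$.
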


Moreover, up to replacing $f^{\star}$ by $\chi(\hbar^{-1} \ . \ ) f^{\star}$, (which does not change the properties of the normal form because $f^{\star}$ is defined by its Taylor series), we can adapt the proof of Proposition \ref{perturbationH0} to get

\begin{lemma}
We can construct the normal form $\BNh$ such that, for $\hbar \in (0,\hbar_0]$ small enough and some $C>0$:
$$(1-C\hbar) \langle \Lh^0 \psi, \psi \rangle \leq \langle \BNh \psi , \psi \rangle \leq (1+C \hbar) \langle \Lh^0 \psi, \psi \rangle, \quad \forall \psi \in \mathcal{S}(\R^d).$$
\end{lemma}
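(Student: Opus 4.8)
The plan is to adapt the proof of Proposition~\ref{perturbationH0}; the only new feature is that replacing $f^{\star}$ by $\chi(\hbar^{-1}\,\cdot\,)f^{\star}$ confines the harmonic-oscillator variables to scale $\hbar$, which gains one extra power of $\hbar$ over the estimate obtained there. So I would first observe that, after this modification, $f^{\star}(w,I_1,\dots,I_{d/2},\hbar)$ is supported in $\{\|(I_1,\dots,I_{d/2})\|\le K\hbar\}$ for a fixed $K>0$; since the Taylor series of $f^{\star}$ at the origin lies in $\grandO_3$, the function $f^{\star}$ and each of its $w$-derivatives vanish to second order at $(I,\hbar)=0$, hence $f^{\star}=\grandO((\|I\|+\hbar)^2)=\grandO(\hbar^2)$ on its support, with the same bound for every $w$-derivative.

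Next I would record the lower bound $\Lh^0\ge c_1\hbar$ on $\Ldeux$ for some $c_1>0$. Since $\Lh^0=\sum_{j}\Op(\bbeta_j)\,\Ih^{(j)}$, the factors $\Op(\bbeta_j)$ (acting in the $w$-variables) commute with the $\Ih^{(j)}$ (acting in the $x$-variables); recalling that the $\bbeta_j$ are bounded from below by a positive constant $c_0$, the G\r{a}rding inequality gives $\Op(\bbeta_j)\ge c_0/2$ for $\hbar$ small, while $\Ih^{(j)}\ge\hbar$, so each commuting product satisfies $\Op(\bbeta_j)\Ih^{(j)}\ge (c_0/2)\hbar$ and the sum gives $c_1=c_0d/4$.

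Then I would bound the perturbation in operator norm. Using the Hermite decomposition~(\ref{decompositionOfL2space}) together with Lemma~\ref{reductionBNHdiagonale}, on the eigenspace $\Ld(\R^{d/2}_y)\otimes\langle\mathbf{h}_n\rangle$ the operator $\Op f^{\star}(w,\Ih^{(1)},\dots,\Ih^{(d/2)},\hbar)$ acts as the Weyl quantization in $w$ of the symbol $f^{\star}(w,\hbar(2n+1),\hbar)$; the cutoff makes this symbol vanish unless $2n_j+1\le K$ for every $j$, i.e. for only finitely many $n$, and for those the symbol and all its $S^0$-seminorms are $\grandO(\hbar^2)$ uniformly in $w$ and in $n$, by the second-order vanishing above. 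Calder\'on--Vaillancourt then gives $\|\Op f^{\star}(w,\Ih^{(1)},\dots,\Ih^{(d/2)},\hbar)\|\le C'\hbar^2$. Combining the two estimates, for every $\psi\in\mathcal S(\R^d)$,
\[ \big|\langle\Op f^{\star}(w,\Ih^{(1)},\dots,\Ih^{(d/2)},\hbar)\psi,\psi\rangle\big|\le C'\hbar^2\|\psi\|^2\le \frac{C'\hbar}{c_1}\langle\Lh^0\psi,\psi\rangle, \]
which yields the asserted two-sided inequality with $C=C'/c_1$.

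The main obstacle is the uniform $\grandO(\hbar^2)$ control, in every symbol seminorm, of $f^{\star}(w,\hbar(2n+1),\hbar)$ over the admissible modes $n$: one must verify that the $\hbar^{-1}$-cutoff genuinely restricts $n$ to a fixed finite set, and that $f^{\star}$ — together with all its $w$-derivatives — really vanishes to second order at the origin of $(I,\hbar)$-space. Everything else is the G\r{a}rding plus Calder\'on--Vaillancourt routine already used in Proposition~\ref{perturbationH0} and Lemma~\ref{rankBNh}.
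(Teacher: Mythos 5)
Your proof is correct and follows the same basic strategy the paper has in mind (the paper gives only the one‑line remark that Proposition~\ref{perturbationH0} is to be adapted after replacing $f^\star$ by $\chi(\hbar^{-1}\,\cdot)\,f^\star$). The key observations are identical: the $\hbar$‑scaled cutoff confines the support of $f^\star$ to $\|I\|\lesssim\hbar$; since $\kappa\in\grandO_3$ the function $f^\star$ (and all its $w$‑derivatives) vanishes to second order at $(I,\hbar)=0$; hence $f^\star=\grandO(\hbar^2)$ in every $S^0$‑seminorm on its support. Where you differ slightly from the paper's template is in how you then compare with $\Lh^0$: Proposition~\ref{perturbationH0} bounds the symbol pointwise against $\sum_j\bbeta_j(w)\lambda_j$ and invokes the G\r{a}rding inequality, whereas you bound the operator norm of $\Op f^\star(\dots)$ by $C'\hbar^2$ via the Hermite block decomposition (finitely many $n$ survive the cutoff) and Calder\'on--Vaillancourt, and then divide by the operator lower bound $\Lh^0\ge c_1\hbar$. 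This is a mild streamlining rather than a genuinely different route — you trade the symbolic comparison for an operator‑norm estimate plus the elementary spectral lower bound — and both give the required constant $C$ in one step. No gaps.
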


It remains to prove the analog of Theorem \ref{comparisonOftheEigenvalues}. For this, we need the following microlocalization results. Their proofs follow the same lines as in section 6. Note the retriction to energies $\lambda \leq \hbar (b_0 + c \hbar^{\eta})$, necessary to localize in a neighborhood of $q_0$ of decreasing size as $\hbar \rightarrow 0$. We define, for any fixed $c>0$:
\begin{align}
K_{\hbar} := \lbrace q \in M : b(q) \leq b_0 + 2c\hbar^{\eta} \rbrace,
\end{align}
and its small neighborhood
\begin{align}
K_{0,\hbar} := \lbrace q \in M : \dd (q, K_{\hbar}) \leq \hbar^{\eta} \rbrace.
\end{align}

\begin{theorem} \label{microlocInfiniteCase} Let $\delta \in (0,\frac{1}{2})$, $c>0$, and $\eta \in (0,1/4)$. Let $\chi_{\hbar} : M \rightarrow [0,1]$ be a smooth cutoff function being $1$ on $K_{0,\hbar}$. Let $\chi_1 : \R \rightarrow [0,1]$ be a smooth cutoff function being $1$ near $0$. Then for any normalized eigenpair $(\lambda, \psi)$ of $\Lh$ such that $\lambda \leq \hbar (b_0 + c \hbar^{\eta})$ we have:
$$ \psi = \chi_1(\hbar^{-2\delta} \Lh) \chi_{\hbar}(q) \psi + \grandO(\hbar^{\infty}) \quad \text{in } \Ld(M),$$
uniformly with respect to $(\lambda,\psi)$.
\end{theorem}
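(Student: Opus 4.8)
The plan is to mimic the two-step strategy of Section \ref{SectionMicrolocalisation}: first an Agmon-type estimate confined to the shrinking well $K_\hbar$, then a microlocalization in the operator $\Lh$ itself. The only new feature is that all length scales now depend on $\hbar$ through the exponent $\eta$, so every constant has to be tracked and the restriction $\eta < 1/4$ has to be used exactly where the $\hbar^{1/4}$-losses in Assumption \ref{InequalityIntensity} would otherwise destroy the argument.

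First I would establish the analog of the Agmon estimates (Theorem \ref{Agmon}) with $K$ replaced by $K_\hbar$. One repeats the proof of Theorem \ref{Agmon} verbatim, using the Agmon formula (Theorem \ref{AgmonFormula}) and Assumption \ref{InequalityIntensity}, with the weight $\Phi_{m,\hbar}(q)=\chi_m\!\left(\dd(q,K_\hbar)\right)\hbar^{-\alpha}$ for a small $\alpha>0$. Splitting the resulting integral over $K_\hbar$ and its complement, on $K_\hbar^c$ one has $b(q)\geq b_0+2c\hbar^\eta$ while $\lambda\leq\hbar(b_0+c\hbar^\eta)$, so the effective gap is of size $c\hbar^{1+\eta}$; the error terms are $\hbar^{1+5/4}$ (from the $\hbar^{1/4}C_0$ loss), $\hbar^{2-2\alpha}$ (from $|\dd\Phi|^2$), so one needs $\alpha$ small and, crucially, $\eta<1/4$ so that $\hbar^{1+\eta}$ dominates $\hbar^{5/4}$. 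One must take $\alpha=\alpha(\eta)$ small enough that $2-2\alpha>1+\eta$; then the argument closes and gives
$$\int_M \left| e^{\dd(q,K_\hbar)\hbar^{-\alpha}}\psi\right|^2\dd q_g \leq C\|\psi\|^2,$$
uniformly in the eigenpair. Since $K_{0,\hbar}$ is the $\hbar^\eta$-neighborhood of $K_\hbar$, the weight is $\geq e^{\hbar^{\eta-\alpha}}$ on $K_{0,\hbar}^c$, which is $\hbar^{-\infty}$ as soon as $\alpha<\eta$ — hence $\psi=\chi_\hbar\psi+\grandO(\hbar^\infty)$.

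Then the second step is identical to the proof of Theorem \ref{microloc}: since $\chi_1(\hbar^{-2\delta}\Lh)$ is bounded, $\chi_1(\hbar^{-2\delta}\Lh)\psi=\chi_1(\hbar^{-2\delta}\Lh)\chi_\hbar\psi+\grandO(\hbar^\infty)$, and one checks $\psi=\chi_1(\hbar^{-2\delta}\Lh)\psi$ because $\lambda\in B(0,\hbar(b_0+c\hbar^\eta))\subset B(0,C\hbar^{2\delta})$ for $\hbar$ small (here $2\delta<1\leq$ the exponent of the energy window, so the inclusion holds), and $\chi_1\equiv 1$ near $0$. Combining the two steps gives the claimed identity.

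The main obstacle is bookkeeping the competing $\hbar$-powers in the Agmon step: the loss $\hbar^{1/4}C_0$ coming from Assumption \ref{InequalityIntensity} is fixed, while the spectral gap has been driven down to $\hbar^{1+\eta}$, so the whole scheme only survives because $\eta<1/4$; one must choose the Agmon exponent $\alpha$ in the narrow window $(\,?\,,\eta)$ depending on $\eta$, and verify the uniformity of all constants over $\lambda\leq\hbar(b_0+c\hbar^\eta)$ — which is automatic here since the estimate only used the upper bound on $\lambda$, not a rank bound. Everything else is a routine transcription of Section \ref{SectionMicrolocalisation}.
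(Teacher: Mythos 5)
Your outline matches the paper's own route: redo the Agmon estimate of Theorem \ref{Agmon} with $K$ replaced by $K_\hbar$ and $K_\varepsilon$ by $K_{0,\hbar}$, then apply the spectral-cutoff step of Theorem \ref{microloc} verbatim. The bookkeeping of the competing powers is also essentially right — the gap on $K_\hbar^c$ is $\gtrsim c\hbar^{1+\eta}$, and you correctly identify that one needs $1+\eta < 5/4$ (i.e.\ $\eta<1/4$) to beat the $\hbar^{1/4}C_0$ losses of Assumption \ref{InequalityIntensity}, and $1+\eta < 2-2\alpha$ (i.e.\ $\alpha<(1-\eta)/2$) to beat the $\hbar^2|\dd\Phi|^2$ term.

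However, there is a sign error in the final decay step, and it is the opposite of what you wrote. On $K_{0,\hbar}^c$ one has $\dd(q,K_\hbar)>\hbar^\eta$, so the Agmon weight satisfies $e^{\dd(q,K_\hbar)\hbar^{-\alpha}}\geq e^{\hbar^{\eta-\alpha}}$. This is superpolynomially large \emph{iff} $\hbar^{\eta-\alpha}\to\infty$, i.e.\ iff $\eta-\alpha<0$, i.e.\ $\alpha>\eta$ — not $\alpha<\eta$ as you claim. Equivalently, $\|\psi-\chi_\hbar\psi\|^2\leq e^{-2\hbar^{\eta-\alpha}}\int_{K_{0,\hbar}^c}|e^{\dd(q,K_\hbar)\hbar^{-\alpha}}\psi|^2\,\dd q_g$, and $e^{-2\hbar^{\eta-\alpha}}=\grandO(\hbar^\infty)$ precisely when $\alpha>\eta$. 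So the admissible window is $\alpha\in(\eta,(1-\eta)/2)$, not $(\cdot,\eta)$; your last paragraph, which presents $\alpha\in(\,?\,,\eta)$ as the key constraint, would actually destroy the argument if followed. The paper simply fixes $\alpha=1/4$, which lies in the correct window because $\eta<1/4$ (this is exactly why the hypothesis on $\eta$ is $\eta<1/4$ and not merely $\eta<1/3$) and $1/4<(1-\eta)/2$ trivially. Once the sign is corrected, the rest of your argument goes through as written.

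A minor remark on uniformity: you claim it is automatic and the rank bound of Lemma \ref{rank} is not needed, whereas the paper cites Lemma \ref{rank}. For the theorem as stated — a single normalized eigenpair — you are right that the Agmon constant depends only on the energy window and not on $\lambda$ individually, so uniformity over eigenpairs is free. The rank bound becomes essential only at the next stage (as in the proof of Theorem \ref{spectralcomparison}), when the estimates have to be propagated to arbitrary elements of the span of the first eigenfunctions.
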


The proof follows the same lines as Theorem \ref{Agmon}, with $\alpha = 1/4$, $K$ replaced by $K_{\hbar}$, $K_{\varepsilon}$ replaced by $K_{0,\hbar}$, and Theorem \ref{microloc} with no change. The uniformity with respect to $(\lambda,\psi)$ follows from Lemma \ref{rank}.

Similarly, we have the microlocalization Theorem for the normal form $\BNh$. We denote
$$V_{\hbar} := \lbrace w \in \R^d, \dd (w, \varphi(K_{0,\hbar})) < \hbar \rbrace.$$

\begin{theorem} \label{microloc2InfiniteCase} Let $\hbar \in (0,\hbar_0]$, $c>0$, $\eta \in (0,1/4)$ and $\delta \in (0,\eta/2)$. Let $\chi_{0}$ be a smooth cutoff function on $\R^{d/2}_w$ supported on $V$ such that $\chi_0=1$ near $0$ and $\chi_1$ a smooth cutoff function being $1$ near $0$. Then for any normalized eigenpair $(\lambda, \psi)$ of $\BNh$ such that $\lambda \leq \hbar (b_0 + c \hbar^{\eta})$, we have:
$$ \psi = \chi_1(\hbar^{-2\delta} \Ih^{(1)})...\chi_1 (\hbar^{-2\delta} \Ih^{(d/2)}) \Op (\chi_{0}(\hbar^{-\delta}w)) \psi  + \grandO(\hbar^{\infty}) \quad \text{in } \Ld(\R^d),$$
uniformly with respect to $(\lambda, \psi)$.
\end{theorem}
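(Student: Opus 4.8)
The plan is to follow the two-step scheme of Lemma~\ref{microBNh1} and Theorem~\ref{microloc2} --- first a localization in $w$, then one in $z$ --- the only new feature being that the $w$-cutoff now sits at the $\hbar$-dependent scale $\hbar^{\delta}$, so the symbolic calculus must be run in the exotic class $S_{\delta}$ and one must check that the gap created by the non-degenerate minimum of $\hat b$ still dominates every error term. Throughout, I write $\psi=u\otimes\mathbf{h}_n$ via Lemma~\ref{reductionBNHdiagonale}; since $\lambda\le\hbar(b_0+c\hbar^{\eta})\le\hbar b_1$ for $\hbar$ small, the inclusion $\spectre(\BNh^{(n)})\subset[\hbar(b_0+c\vert n\vert),+\infty)$ (as in Section~5) forces $\vert n\vert\le n_{max}$.

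\emph{Step 1 (localization in $w$).} First I would invoke Lemma~\ref{microBNh1} to produce a \emph{fixed} cutoff $\tilde\chi_0$, supported in $V$ and $\equiv1$ near $0$, with $\psi=\Op(\tilde\chi_0)\psi+\grandO(\hbar^{\infty})$; from now on I work with symbols supported in a fixed neighborhood of $w=0$, on which $\hat b(w)-b_0\ge c_0\vert w\vert^{2}$ (non-degeneracy of the minimum at $\varphi(q_0)=0$) and $\BNh\ge(1-C\hbar)\,\hbar\,\Op(\hat b)$ (the lemma preceding the statement, together with $\Lh^0=\sum_j\Op(\hat\beta_j)\Ih^{(j)}$ and $\hat b=\sum_j\hat\beta_j$). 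Set $\chi=\tilde\chi_0\bigl(1-\chi_0(\hbar^{-\delta}\,\cdot\,)\bigr)$, supported in $\{c\hbar^{\delta}\le\vert w\vert\le C\}$, where $\hat b-b_0\ge c_0c^{2}\hbar^{2\delta}$. The localized G\r{a}rding inequality, applied to $\hat b-b_0-c_0c^{2}\hbar^{2\delta}$ against the $S_{\delta}$-cutoff $\chi$ (its error being $\grandO(\hbar^{2-2\delta})\ll\hbar^{1+2\delta}$ because $\delta<1/4$), gives
\begin{align*}
\langle\BNh\Op(\chi)\psi,\Op(\chi)\psi\rangle\ \ge\ \hbar\Bigl(b_0+\tfrac{1}{2}c_0c^{2}\hbar^{2\delta}\Bigr)\Vert\Op(\chi)\psi\Vert^{2}+\grandO(\hbar^{\infty})\Vert\psi\Vert^{2},
\end{align*}
whereas the eigenvalue equation gives
\begin{align*}
\langle\BNh\Op(\chi)\psi,\Op(\chi)\psi\rangle\ \le\ \hbar(b_0+c\hbar^{\eta})\Vert\Op(\chi)\psi\Vert^{2}+\bigl\vert\langle[\BNh,\Op(\chi)]\psi,\Op(\chi)\psi\rangle\bigr\vert .
\end{align*}
Since $2\delta<\eta$, the quadratic gap $\hbar^{1+2\delta}$ beats the energy window $\hbar^{1+\eta}$; and since $\BNh^{(n)}$ has symbol $F^{(n)}=\grandO(\hbar)$, the commutator $[\BNh^{(n)},\Op(\chi)]$ has symbol of order $\hbar^{2-\delta}$ --- a factor $\hbar$ from $F^{(n)}$, a factor $\hbar^{-\delta}$ from each derivative of $\chi$ --- supported in $\supp\nabla\chi$, so that $\bigl\vert\langle[\BNh,\Op(\chi)]\psi,\Op(\chi)\psi\rangle\bigr\vert\le C\hbar^{2-\delta}\Vert\Op(\bar\chi)\psi\Vert^{2}$ for a slightly larger $S_{\delta}$-cutoff $\bar\chi$. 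Subtracting the two displays yields $\Vert\Op(\chi)\psi\Vert^{2}\le C'\hbar^{1-3\delta}\Vert\Op(\bar\chi)\psi\Vert^{2}+\grandO(\hbar^{\infty})\Vert\psi\Vert^{2}$, which is a genuine gain since $\delta<1/4<1/3$.

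Iterating this estimate along a chain of $S_{\delta}$-cutoffs, all supported in the fixed neighborhood, gives $\Op(\chi)\psi=\grandO(\hbar^{\infty})$, i.e. $\psi=\Op(\chi_0(\hbar^{-\delta}w))\psi+\grandO(\hbar^{\infty})$; as in Theorem~\ref{microlocInfiniteCase} all constants are uniform in $(\lambda,\psi)$ and, using Lemma~\ref{rankBNh}, so is the error. \emph{Step 2 (localization in $z$).} Exactly as in Theorem~\ref{microloc2}: with $\psi=u\otimes\mathbf{h}_n$ and $\vert n\vert\le n_{max}$ one has $\Ih^{(j)}\psi=\hbar(2n_j+1)\psi$, so $\chi_1(\hbar^{-2\delta}\Ih^{(j)})\psi=\chi_1(\hbar^{1-2\delta}(2n_j+1))\psi=\psi$ for $\hbar$ small (uniformly over $\vert n\vert\le n_{max}$, since $\hbar^{1-2\delta}\to0$ and $\chi_1\equiv1$ near $0$). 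Composing the uniformly bounded operator $\chi_1(\hbar^{-2\delta}\Ih^{(1)})\cdots\chi_1(\hbar^{-2\delta}\Ih^{(d/2)})$ with the conclusion of Step~1 gives the stated identity.

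The hard part is the exotic symbolic bookkeeping of Step~1: one must verify that on the shrinking region $\{c\hbar^{\delta}\le\vert w\vert\le C\}$ the non-degenerate gap $\hbar^{1+2\delta}$ strictly dominates the G\r{a}rding error $\hbar^{2-2\delta}$ (forcing $\delta<1/4$), the commutator term, which leaves only the net bootstrap gain $\hbar^{1-3\delta}$ (forcing $\delta<1/3$), and the energy window $\hbar^{1+\eta}$ (forcing $2\delta<\eta$); these comparisons are precisely what dictate the hypotheses $\eta\in(0,1/4)$ and $\delta\in(0,\eta/2)$, the sharper bound $\eta<1/4$ itself coming from the companion Agmon estimate behind Theorem~\ref{microlocInfiniteCase}.
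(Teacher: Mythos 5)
Your proof follows essentially the same route as the paper's: you set up the shrinking cutoff $\chi=\tilde\chi_0(1-\chi_0(\hbar^{-\delta}\cdot))$, derive the three ingredients (energy upper bound from the eigenvalue equation, a commutator bound $\grandO(\hbar^{2-\delta})$ in the $S_\delta$ calculus, and a G\r{a}rding lower bound exploiting the quadratic gap $\hat b-b_0\gtrsim\hbar^{2\delta}$ on the annulus), combine them into the bootstrap $\Vert\Op(\chi)\psi\Vert^2\lesssim\hbar^{1-3\delta}\Vert\Op(\bar\chi)\psi\Vert^2$, iterate, and finish with the Hermite eigenvalue step exactly as in Theorem~\ref{microloc2}. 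The only genuine additions on your side are the explicit preliminary reduction to a fixed neighborhood via Lemma~\ref{microBNh1} (which the paper leaves implicit, since its $\chi=1-\chi_0(\hbar^{-\delta}w)$ is not compactly supported and tacitly relies on the global extension of $\hat b$), and a more explicit bookkeeping of which exponent inequalities each step uses; both are clarifications rather than a different method.
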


\begin{proof} We follow the proof of Lemma \ref{microBNh1}. With $\chi(w) = 1- \chi_0(\hbar^{-\delta} w)$, Inequality (\ref{01051902}) becomes
\begin{align*}
\langle \BNh \Op (\chi ) \psi, \Op( \chi ) \psi \rangle \leq \hbar (b_0 + c \hbar^{\eta}) \Vert \Op ( \chi ) \psi \Vert^2 + \langle [\BNh, \Op( \chi) ] \psi , \Op (\chi) \psi \rangle,
\end{align*}
And the estimate (\ref{01051901}) on the commutator becomes
\begin{align*}
\langle [ \BNh, \Op ( \chi ) ] \psi, \Op( \chi )\psi \rangle \leq \hbar^{2- \delta}\Vert \Op ( \bar{\chi} ) \psi \Vert^2,
\end{align*}
because the commutator is of order $\hbar^{1-\delta}$. The lower bound becomes
\begin{align*}
\langle \BNh \Op(\chi) \psi, \Op(\chi) \psi \rangle &\geq (1-C\hbar) \langle \Lh^0 \Op (\chi) \psi, \Op(\chi) \psi \rangle\\
& \geq (1- C\hbar) \hbar (b_0 + \tilde{C}\hbar^{2\delta}) \Vert \Op ( \chi ) \psi \Vert^2.
\end{align*}
Hence we get
\begin{align*}
\left[(1-C \hbar)(b_0 + \tilde{C}\hbar^{2\delta}) - (b_0 + c \hbar^{\eta}) \right] \Vert \Op ( \chi ) \psi \Vert^2 \leq \hbar^{1-\delta} \Vert \Op (\bar{\chi} ) \psi \Vert^2.
\end{align*}
Since $2\delta < \eta$, we get a new $C>0$ such that for $\hbar$ small enough:
\begin{align*}
C \hbar^{2 \delta} \Vert \Op ( \chi ) \psi \Vert^2 \leq \hbar^{1-\delta} \Vert \Op (\bar{\chi} ) \psi \Vert^2.
\end{align*}
Iterating with $\bar{\chi}$ instead of $\chi$, for $\delta < 1/3$ we get
$$\Op(\chi) \psi = \grandO(\hbar^{\infty}).$$
The end of the proof is the same as the proof of Theorem \ref{microloc2}. The uniformity with respect to $(\lambda,\psi)$ comes from Lemma \ref{rankBNh}.
\end{proof}

Since the eigenfunctions of $\BNh$ and $\Lh$ are microlocalized on a neighborhood of the minimum of diameter going to $0$ as $\hbar \rightarrow 0$, we can follow the proof of Theorem \ref{comparisonOftheEigenvalues} (section 7) to get:

\begin{theorem}
Let $c>0$ and $\eta \in (0,1/4)$. We denote $$\lambda_1(\hbar) \leq \lambda_2(\hbar) \leq ... \quad \text{and} \quad \nu_1(\hbar) \leq \nu_2(\hbar) \leq ...$$ the first eigenvalues of $\Lh$ and $\BNh$. Then
$$\lambda_n(\hbar) = \nu_n(\hbar) + \grandO(\hbar^{\infty}),$$
uniformly in $n$ such that $\lambda_n(\hbar) \leq \hbar (b_0 + c \hbar^{\eta})$ and $\nu_n(\hbar) \leq \hbar (b_0 + c \hbar^{\eta})$.
\end{theorem}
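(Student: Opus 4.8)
The plan is to transcribe, essentially line by line, the min-max argument in the proof of Theorem \ref{spectralcomparison}, replacing the microlocalization inputs by their shrinking-neighborhood analogues (Theorems \ref{microlocInfiniteCase} and \ref{microloc2InfiniteCase}) and exploiting that the remainder $\Rh$ of the $r_0=\infty$ normal form has a symbol whose Taylor series lies in $\grandO_N$ for \emph{every} $N\geq 3$. I would fix $N\geq 3$ and $1\leq n\leq N(\Lh,\hbar(b_0+c\hbar^\eta))$, let $\psi_{1,\hbar},\dots,\psi_{n,\hbar}$ be the first eigenfunctions of $\Lh$, and for $\psi\in\mathrm{span}_{1\leq j\leq n}\psi_{j,\hbar}$ set $\tilde\psi=\chi_1(\hbar^{-2\delta}\Lh)\chi_\hbar(q)\psi$ with $\chi_\hbar,\chi_1$ as in Theorem \ref{microlocInfiniteCase}. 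By that theorem together with the uniform rank bound of Lemma \ref{rank}, $\tilde\psi=\psi+\grandO(\hbar^\infty)$ in $\Ld(M)$ uniformly in $\psi$, and $\tilde\psi$ is microlocalized in $\Omega_\hbar=\{(q,p):|p-A(q)|^2<c\hbar^{2\delta},\ \dd(q,K_\hbar)\leq\hbar^\eta\}$, a neighborhood of $(q_0,A_{q_0})$ that shrinks as $\hbar\to 0$.

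Using the quantized normal form $\Uht^*\Lh\Uht=\BNh+\Rh$ (with $\Uht=V_\hbar\Uh$) and the microlocal identity $V_\hbar V_\hbar^*=I$ near $U$, I would transfer the quadratic form to $\BNh$ exactly as in (\ref{formulaBNhUhtPsit}):
$$\langle\BNh\Uht^*\tilde\psi,\Uht^*\tilde\psi\rangle=\langle\Lh\tilde\psi,\tilde\psi\rangle-\langle\Uh\Rh\Uh^*V_\hbar^*\tilde\psi,V_\hbar^*\tilde\psi\rangle+\grandO(\hbar^\infty)\Vert\tilde\psi\Vert^2.$$
The first term is $\leq(\lambda_n(\hbar)+\grandO(\hbar^\infty))\Vert\psi\Vert^2$ after using Theorem \ref{microlocInfiniteCase} once more to replace $\tilde\psi$ by $\psi$. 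For the remainder, pushing $\Omega_\hbar$ through $\Phi$ shows that $V_\hbar^*\tilde\psi$ is microlocalized where $|z|^2\leq c\hbar^{2\delta}$ and $w$ lies in a neighborhood of $0$ of size $\grandO(\hbar^\delta)$; this is where the $\hbar$-shrinking of the well is essential. Since $\symbolseries(\Uh\Rh\Uh^*)\in\grandO_N$, there is a \emph{fixed} neighborhood $W_N$ of $w=0$ on which the symbol of $\Uh\Rh\Uh^*$ is $\grandO((|z|+\hbar^{1/2})^N)$; for $\hbar$ small (depending on $N$) the microsupport of $V_\hbar^*\tilde\psi$ is contained in $W_N$, so that term is $\grandO((\hbar^\delta+\hbar^{1/2})^N)\Vert\tilde\psi\Vert^2=\grandO(\hbar^{\delta N})\Vert\tilde\psi\Vert^2$ because $\delta<1/2$.

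Applying the min-max principle to the $n$-dimensional space spanned by the $\Uht^*\tilde\psi$ then yields $\nu_n(\hbar)\leq\lambda_n(\hbar)+\grandO(\hbar^{\delta N})$, uniformly in $n$ with $\lambda_n(\hbar)\leq\hbar(b_0+c\hbar^\eta)$; since $N$ is arbitrary, this is $\grandO(\hbar^\infty)$. The reverse inequality $\lambda_n(\hbar)\leq\nu_n(\hbar)+\grandO(\hbar^\infty)$ is obtained symmetrically, starting from the eigenfunctions of $\BNh$ and using Theorem \ref{microloc2InfiniteCase}, Lemma \ref{rankBNh}, and $\symbolseries(\Rh)\in\grandO_N$ for all $N$. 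I expect the only genuinely delicate point to be the interleaving of the two limits: the neighborhood $W_N$ on which the formal normal form — hence the remainder estimate — is valid shrinks with $N$, whereas the microsupport of the eigenfunctions shrinks with $\hbar$; one must fix $N$ first, extract $W_N$, and only then let $\hbar\to 0$, which is precisely why the statement restricts to energies $\lambda\leq\hbar(b_0+c\hbar^\eta)$ forcing localization in an $\hbar$-shrinking neighborhood of $q_0$. Apart from that, everything is a line-by-line repetition of the proof of Theorem \ref{spectralcomparison}.
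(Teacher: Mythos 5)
Your proposal is correct and follows essentially the same approach as the paper, which itself just says ``follow the proof of Theorem~\ref{spectralcomparison} using the $\hbar$-shrinking microlocalization theorems.'' You have correctly identified the one genuinely delicate point — that the neighborhood $W_N$ on which the remainder estimate is valid shrinks with $N$, while the microsupport of the eigenfunctions shrinks with $\hbar$, so one must fix $N$, extract $W_N$, and only then send $\hbar\to 0$ — and your bound $\grandO(\hbar^{\delta N})$ with $\delta$ fixed and $N$ arbitrary does indeed sum up to $\grandO(\hbar^\infty)$.
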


\appendix

\section{}

\begin{lemma}\label{SubprincipalSymbol}
The principal and subprincipal symbols of the operator
$$\Lh = (i\hbar \dd + A)^{*}(i\hbar \dd + A)$$
are
$$\sigma_0(\Lh) = \vert p-A(q) \vert^2_{g^*(q)}, \quad \text{and} \quad  \sigma_1(\Lh) = 0.$$
\end{lemma}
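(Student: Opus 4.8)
The plan is to treat the two symbols separately: the principal symbol falls out of the coordinate formula at once, while the subprincipal symbol will follow from a short parity / self-adjointness argument, provided I first pass to half-densities. For the principal symbol I would read off \eqref{Lhcoord}: in any chart $\Lh$ is a composition of the multiplication operators by $|g|^{-1/2}$ and $g^{kl}|g|^{1/2}$ with the first-order operators $P_j := i\hbar\partial_j + A_j$. Since $\Op(\xi_j) = -i\hbar\partial_j$, we have $P_j = \Op(A_j - \xi_j)$ with an exact (polynomial) Weyl symbol, so the principal symbol of the composition is the product of the factors' principal symbols:
\[
\sigma_0(\Lh) = \sum_{k,l} |g|^{-1/2}(A_k - \xi_k)\,g^{kl}|g|^{1/2}\,(A_l - \xi_l) = \sum_{k,l} g^{kl}(q)(\xi_k - A_k(q))(\xi_l - A_l(q)) = |p - A(q)|^2_{g^*_q}.
\]

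For the subprincipal symbol the first thing I would do is fix a half-density realization, which is exactly what makes $\sigma_1$ a chart-independent object: identifying a function $u$ on $M$ with the half-density $u\,|\dd q_g|^{1/2} = u|g|^{1/4}|\dd x|^{1/2}$ in the chart, the operator $\Lh$ is intertwined with $\tilde{\Lh} := |g|^{1/4}\,\Lh^{\text{coord}}\,|g|^{-1/4}$ acting on $\Ldeux$ with Lebesgue measure, and $\sigma_1(\Lh)$ is by definition the coefficient of $\hbar$ in the Weyl symbol of $\tilde{\Lh}$. Writing $\tilde{\Lh} = \sum_{k,l} M_\mu\,P_k\,M_{c_{kl}}\,P_l\,M_\mu$ with $\mu := |g|^{-1/4}$ and $c_{kl} := g^{kl}|g|^{1/2}$, and using that each factor has an exact Weyl symbol, the Weyl symbol of $\tilde{\Lh}$ is $\sigma = \sum_{k,l}\mu \star \pi_k \star c_{kl} \star \pi_l \star \mu$ with $\pi_j = A_j - \xi_j$ and $\star$ the Moyal product in $(x,\xi)$; crucially, all five factors $\mu, c_{kl}, \pi_j$ are \emph{real-valued}.

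Then the argument closes with two observations. First, $u \mapsto |g|^{-1/4}u$ is unitary from $\Ldeux$ onto $\Ld(M,\dd q_g)$, so $\tilde{\Lh}$ is formally self-adjoint on $\Ldeux$; hence its Weyl symbol $\sigma$ is real-valued, so in particular $\sigma_1$ is real. Second, the Moyal product satisfies $\overline{f \star_\hbar g} = \bar f \star_{-\hbar} \bar g$ (conjugation flips the sign of $i\hbar$), so for the real factors above $\overline{\sigma(\cdot\,;\hbar)} = \sigma(\cdot\,;-\hbar)$; comparing $\hbar$-expansions shows the coefficient of $\hbar^j$ is real for $j$ even and purely imaginary for $j$ odd, in particular $\sigma_1$ is purely imaginary. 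Being simultaneously real and purely imaginary, $\sigma_1 = 0$, i.e. $\sigma_1(\Lh) = 0$.

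The one genuinely non-routine point — the likely obstacle — is the passage to half-densities: one must conjugate $\Lh^{\text{coord}}$ by $|g|^{1/4}$ so that $\sigma_1$ is an invariant at all, and then just bookkeep that each building block $M_\mu, P_j, M_{c_{kl}}$ carries a real and exact Weyl symbol; after that the parity / self-adjointness dichotomy is immediate. If one prefers to avoid that shortcut, the alternative is to expand $\mu \star \pi_k \star c_{kl} \star \pi_l \star \mu$ to first order in $\hbar$ and verify by hand that the resulting Poisson-bracket terms cancel after summation over $k,l$, using $g^{kl} = g^{lk}$, $B_{kl} = -B_{lk}$ and $\mu^2 c_{kl} = g^{kl}$ — elementary but more laborious.
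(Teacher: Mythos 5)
Your proof is correct and, for the subprincipal symbol, takes a genuinely different route from the paper's. Both proofs share the preliminary step: passing to the half-density realization $\tilde{\mathcal{L}}_\hbar = |g|^{1/4}\,\Lh^{\mathrm{coord}}\,|g|^{-1/4}$ and noting the factorization into operators with real, $\hbar$-independent Weyl symbols. But where the paper then grinds through the Moyal products $a_k \star g^{k\ell} \star b_\ell$ to first order in $\hbar$, collects the Poisson-bracket contributions, and verifies their cancellation after summing over $k,\ell$, you replace the entire computation by an abstract dichotomy: (a) self-adjointness of $\tilde{\mathcal{L}}_\hbar$ on $\Ld(\R^d,\dd x)$ forces its Weyl symbol to be real in every order of $\hbar$; (b) the $\hbar$-parity of the Moyal product, $\overline{f\star_\hbar g} = \bar f \star_{-\hbar} \bar g$, applied to the all-real factorization forces $\overline{\sigma(\hbar)} = \sigma(-\hbar)$, so the odd-order coefficients are purely imaginary. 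Hence $\sigma_1$ is simultaneously real and purely imaginary, so it vanishes. Your argument is shorter, more conceptual, and actually yields the stronger conclusion that \emph{all} odd-order Weyl coefficients of $\tilde{\mathcal{L}}_\hbar$ vanish, not just $\sigma_1$. The paper's direct expansion is more elementary (it needs nothing beyond the first-order Moyal formula), which may be why the author chose it, but it is also more error-prone and gives only the one term. Both are valid; you correctly identified the half-density conjugation as the one genuinely necessary structural step.
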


\begin{proof}
We will compute these symbols in coordinates, in which $\Lh$ acts as:
$$ \Lh^{coord} = \sum_{k \ell} \vert g \vert^{-1/2} (i \hbar \partial_k + A_k) g^{k \ell} \vert g \vert^{1/2} (i\hbar \partial_{\ell}+ A_{\ell}).$$
The principal symbol is always well-defined. The subprincipal symbol is well-defined if we restrict the changes of coordinates to be volume-preserving. This amounts to conjugating $\Lh^{coord}$ by $\vert g \vert^{1/4}$. Thus the subprincipal symbol is defined in coordinates by:
$$\sigma_1(\Lh) = \sigma_1(\vert g \vert^{1/4} \Lh^{coord} \vert g \vert^{-1/4}).$$
The total symbol of $-i\hbar \partial_k -A_k$ is
$$\sigma ( - i \hbar \partial_k - A_k ) = p_k - A_k,$$
so we can use the star product $\star$ on symbols to compute the symbol of $\Lh$:
$$\sigma(\vert g \vert^{1/4} \Lh^{coord} \vert g \vert^{-1/4}) = \sum_{k \ell} \vert g \vert^{1/4} \star \vert g \vert^{-1/2} \star (p_k - A_k)  \star  g^{k \ell} \vert g \vert^{1/2} \star (p_{\ell} - A_{\ell})  \star \vert g \vert^{-1/4}.$$
Now we will use the formula
$$\sigma ( f \star g ) = fg + \frac{\hbar}{2i} \lbrace f, g \rbrace + \grandO(\hbar^2)$$
several times to compute the symbol, where $\lbrace f,g \rbrace$ denotes the Poisson brackets. Of course, we directly deduce the principal symbol:
$$\sigma_0(\vert g \vert^{1/4} \Lh^{coord} \vert g \vert^{-1/4}) = \sum_{k \ell} g^{k \ell} (p_k - A_k)(p_{\ell}-A_{\ell})$$
so that
$$\sigma_0 ( \Lh) = \vert p -A(q) \vert^2_{g^*(q)}.$$
To compute the subprincipal symbol, we will use:
\begin{small}
$$\sigma (\vert g \vert^{1/4} \Lh^{coord} \vert g \vert^{-1/4}) = \sum_{k \ell} \left[ \vert g \vert^{-1/4} \star (p_k - A_k) \star \vert g \vert^{1/4} \right] \star g^{k\ell} \star \left[ \vert g \vert^{1/4} \star (p_{\ell} - A_{\ell}) \star \vert g \vert^{-1/4} \right].$$
\end{small}
Let us compute $a_k = \vert g \vert^{-1/4} \star (p_k - A_k) \star \vert g \vert^{1/4}$.
\begin{align*}
a_k &= (p_k - A_k) + \frac{\hbar}{2i} \left[ \lbrace \vert g \vert^{-1/4} (p_k - A_k), \vert g \vert^{1/4} \rbrace + \lbrace \vert g \vert^{-1/4} , p_k - A_k \rbrace \vert g \vert^{1/4} \right] + \grandO(\hbar^2)\\
&= (p_k - A_k) + \frac{\hbar}{2i} \left[ \vert g \vert^{-1/4} \frac{\partial \vert g \vert^{1/4}}{\partial q_k} - \frac{\partial \vert g \vert^{-1/4}}{\partial q_k} \vert g \vert^{1/4} \right] + \grandO(\hbar^2) \\
&=(p_k - A_k) + \frac{\hbar}{i} \vert g \vert^{-1/4} \frac{\partial \vert g \vert^{1/4}}{\partial q_k} + \grandO(\hbar^2).
\end{align*}
We also get the similar result for $b_{\ell} = \vert g \vert^{1/4} \star (p_{\ell} - A_{\ell}) \star \vert g \vert^{-1/4}$:
\begin{align*}
b_{\ell} = (p_{\ell} - A_{\ell}) - \frac{\hbar}{i} \vert g \vert^{-1/4} \frac{\partial \vert g \vert^{1/4} }{\partial q_{\ell}} + \grandO(\hbar^2)
\end{align*}
Thus we can compute
\begin{align*}
a_k \star g^{k \ell} &= g^{k\ell} (p_k - A_k) + \frac{\hbar}{2i} \lbrace p_k - A_k , g^{k \ell} \rbrace + \frac{\hbar}{i} \vert g \vert^{-1/4} \frac{\partial \vert g \vert^{1/4}}{\partial q_k} g^{k\ell} + \grandO(\hbar^2)\\
&= g^{k \ell} (p_k - A_k) + \frac{\hbar}{2i} \frac{\partial g^{k \ell}}{\partial q_k} + \frac{\hbar}{i} \vert g \vert^{-1/4} \frac{\partial \vert g \vert^{1/4}}{\partial q_k} g^{k\ell}+ \grandO(\hbar^2),\\
\end{align*}
and
\begin{small}
\begin{align*}
a_k \star g^{k \ell} \star b_{\ell} &= g^{k \ell} (p_k - A_k) (p_l - A_l) + \frac{\hbar}{2i} \lbrace g^{k \ell} (p_k - A_k) , p_{\ell} - A_{\ell}\rbrace - \frac{\hbar}{i} g^{k \ell} (p_k - A_k) \vert g \vert^{-1/4} \frac{\partial \vert g \vert^{1/4}}{\partial q_{\ell}}\\
& \quad  + \frac{\hbar}{2i} \frac{\partial g^{k \ell}}{\partial q_k} (p_{\ell} - A_{\ell}) + \frac{\hbar}{i} \vert g \vert^{-1/4} \frac{\partial \vert g \vert^{1/4}}{\partial q_k} (p_{\ell}-A_{\ell}) + \grandO(\hbar^2).
\end{align*}
\end{small}
Summing over $k,\ell$, we get
\begin{small}
\begin{align*}
\sum_{k \ell} a_k \star g^{k \ell} \star b_{\ell} &= \sum_{k \ell} g^{k \ell} (p_k - A_k) (p_l - A_l) + \frac{\hbar}{2i} \lbrace g^{k \ell} (p_k - A_k) , p_{\ell}- A_{\ell} \rbrace \\ 
& \quad + \frac{\hbar}{2i} \frac{\partial g^{k \ell}}{\partial q_k} (p_{\ell} - A_{\ell}) + \grandO(\hbar^2)\\
&= \sum_{k \ell} g^{k \ell} (p_k - A_k)(p_{\ell} - A_{\ell}) + \frac{\hbar}{2i}g^{k \ell} \frac{\partial(p_{\ell} - A_{\ell})}{\partial q_{k}} - \frac{\hbar}{2i} \frac{\partial g^{k \ell} (p_k - A_k)}{\partial q_{\ell}} \\
& \quad + \frac{\hbar}{2i} \frac{\partial g^{k \ell}}{\partial q_k} (p_{\ell} - A_{\ell}) + \grandO(\hbar^2)\\
&= \sum_{k \ell} g^{k \ell} (p_k - A_k)(p_{\ell}- A_{\ell}) + \grandO(\hbar^2).
\end{align*}
\end{small}
Since $$\sigma(\vert g \vert^{1/4} \Lh^{coord} \vert g \vert^{-1/4}) = \sum_{k \ell} a_k \star g^{k \ell} \star b_{\ell},$$ we deduce that: $$\sigma_1(\vert g \vert^{1/4} \Lh^{coord} \vert g \vert^{-1/4} ) = 0,$$
and
$$\sigma_1( \Lh) = 0.$$
\end{proof}

The following Lemma due to Weinstein \cite{Weinstein} tells that, if two 2-forms coincide on a submanifold, they are equal up to a transformation tangent to the identity.

\begin{lemma}[Relative Darboux lemma] \label{RelativeDarboux} Let $\omega_0$ and $\omega_1$ be two $2$-forms on $\Omega \times \R^d_z$ which are closed and non degenerate. Assume that $\omega_{0 \vert z=0} = \omega_{1 \vert z=0}$. Then there exists a change of coordinates $S$ on a neighborhood of $\Omega \times \lbrace 0 \rbrace$ such that $$S^* \omega_1 = \omega_0 \quad \text{and} \quad S = Id + \grandO(\vert z \vert^2).$$
\end{lemma}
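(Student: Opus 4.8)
The plan is to use Moser's path method (the homotopy/deformation argument), which is the standard route to relative Darboux-type statements. First I would set $\sigma = \omega_1 - \omega_0$ and $\omega_t = \omega_0 + t\sigma$ for $t \in [0,1]$. Since $\omega_0$ and $\omega_1$ agree at every point of $\Omega \times \lbrace 0 \rbrace$, the $2$-form $\sigma$ vanishes identically at each such point; consequently $\omega_t = \omega_0$ along $\lbrace z=0 \rbrace$ and is non-degenerate there, hence (by continuity, and after possibly shrinking to a relatively compact neighborhood of $\Omega \times \lbrace 0 \rbrace$) non-degenerate on a neighborhood of $\lbrace z=0 \rbrace$ for all $t \in [0,1]$ simultaneously.

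Next I would produce a primitive of $\sigma$ that vanishes to second order along $\lbrace z=0 \rbrace$ — a fiberwise-relative Poincaré lemma. For this I use the radial retraction $\rho_s(w,z) = (w, sz)$, $s \in [0,1]$, which deformation-retracts a neighborhood of $\Omega \times \lbrace 0 \rbrace$ onto it, together with the associated homotopy operator $Q\sigma = \int_0^1 \rho_s^*(\iota_{X_s}\sigma)\,\dd s$, where $X_s = \frac1s\, z\cdot\partial_z$ generates $\rho_s$. Since $\dd\sigma = 0$ and $\rho_0^*\sigma = 0$ (because $\sigma$ is the zero bilinear form at points of $\lbrace z=0 \rbrace$), Cartan's homotopy formula gives $\sigma = \dd(Q\sigma)$. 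Writing $\sigma$ in coordinates $(w,z)$ and tracking powers of $z$: the coefficients of $\sigma$ are $\grandO(\vert z \vert)$ since $\sigma$ vanishes on $\lbrace z=0 \rbrace$; each surviving term of $\iota_{X_s}\sigma$ carries an explicit factor $z_i/s$; and $\rho_s^*$ contributes a compensating factor $s$ (which also shows the apparent singularity of $X_s$ at $s=0$ is removable after the pullback). One then checks that every coefficient of $\beta := Q\sigma$ is $\grandO(\vert z \vert^2)$.

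Finally I would define the time-dependent vector field $X_t$ by $\iota_{X_t}\omega_t = -\beta$, which is legitimate since $\omega_t$ is non-degenerate; as $\omega_t^{-1}$ is smooth and $\beta = \grandO(\vert z \vert^2)$ we get $X_t = \grandO(\vert z \vert^2)$. Hence $X_t$ and its first derivatives vanish along $\lbrace z=0 \rbrace$, so its flow $\psi_t$ is defined for $t\in[0,1]$ on a neighborhood of $\Omega \times \lbrace 0 \rbrace$, fixes $\lbrace z=0 \rbrace$ pointwise, and satisfies $\psi_t = \mathrm{Id} + \grandO(\vert z \vert^2)$. Moreover $\frac{\dd}{\dd t}(\psi_t^*\omega_t) = \psi_t^*\big(\mathcal{L}_{X_t}\omega_t + \dot\omega_t\big) = \psi_t^*\big(\dd\iota_{X_t}\omega_t + \sigma\big) = \psi_t^*(-\dd\beta + \sigma) = 0$, using $\dd\omega_t = 0$ and Cartan's formula, so $\psi_t^*\omega_t = \psi_0^*\omega_0 = \omega_0$; taking $S = \psi_1$ gives $S^*\omega_1 = \omega_0$ and $S = \mathrm{Id} + \grandO(\vert z \vert^2)$. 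The one genuinely delicate point is the second-order vanishing of $\beta$ (equivalently of $X_t$): this is precisely what forces $S$ to be tangent to the identity along $\lbrace z=0 \rbrace$, not merely to fix it. The non-compactness of $\Omega \times \R^d_z$ is harmless, since everything is carried out in a neighborhood of $\lbrace z=0 \rbrace$ on which $X_t$ is small (vanishing on $\lbrace z=0 \rbrace$), so the flow stays in the domain for $t\in[0,1]$.
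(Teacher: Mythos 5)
Your proof is correct, and it is exactly the classical Moser deformation argument (fiberwise radial homotopy operator producing a primitive $\beta = Q\sigma = \mathcal{O}(\vert z \vert^2)$, then the time-dependent Moser vector field). The paper does not actually prove this lemma — it only cites \cite{article} and Weinstein's original paper — so there is no in-paper argument to compare against; those references use the same construction you describe. The two points you flag as delicate are indeed the ones that matter: (a) the $1/s$ singularity in $X_s$ is removed by the $s$-scaling built into $\rho_s^*$, and (b) tracking the order of vanishing of $Q\sigma$ carefully is what upgrades ``$S$ fixes $\{z=0\}$'' to ``$S = \mathrm{Id} + \mathcal{O}(\vert z \vert^2)$,'' which is the part of the statement the paper actually needs in Lemma \ref{Phizero}.
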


For a proof, see for example \cite{article} and the references therein. The next Lemma states the Agmon formula (see \cite{Agmon}).

\begin{lemma}[Agmon formula]\label{AgmonFormula}
Let $\psi$ be an eigenfunction of $\Lh$ associated to $\lambda$, and $\Phi : M \rightarrow \R$ is a Lipschitz function such that $e^{\Phi}\psi$ be in the domain of $q_{\hbar}$, then $\dd \Phi$ is defined almost everywhere and:
$$q_{\hbar}(e^{\Phi}\psi) = \lambda \Vert e^{\Phi}\psi \Vert^2 + \hbar^2 \Vert e^{\Phi}\psi \ \dd \Phi \Vert^2.$$
\end{lemma}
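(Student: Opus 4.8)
The plan is to reduce the statement to the single algebraic identity
$$q_{\hbar}(e^{\Phi}\psi) = \hbar^2 \Vert e^{\Phi}(\dd\Phi)\psi\Vert^2 + \Rp\, q_{\hbar}[e^{2\Phi}\psi,\psi],$$
valid for every real Lipschitz $\Phi$ for which $e^{\Phi}\psi$ (hence also $e^{2\Phi}\psi$, see below) lies in $D(q_{\hbar})$, and then to insert the eigenvalue equation. Note first that, by Rademacher's theorem, $\dd\Phi$ is defined almost everywhere and lies in $L^{\infty}\Omega^1(M)$, so all the one-forms appearing below are genuinely square-integrable.

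To obtain the identity I would use the magnetic Leibniz rule: since $\Phi$ is real-valued, $e^{\Phi}$ is a real multiplier with $\dd(e^{\Phi}) = e^{\Phi}\,\dd\Phi$, so, writing $\omega = (i\hbar\dd+A)\psi \in \Ld\Omega^1(M)$,
$$(i\hbar\dd+A)(e^{\Phi}\psi) = i\hbar\, e^{\Phi}(\dd\Phi)\psi + e^{\Phi}\omega, \qquad (i\hbar\dd+A)(e^{2\Phi}\psi) = 2i\hbar\, e^{2\Phi}(\dd\Phi)\psi + e^{2\Phi}\omega.$$
Expanding $\Vert(i\hbar\dd+A)(e^{\Phi}\psi)\Vert^2$ and comparing with $q_{\hbar}[e^{2\Phi}\psi,\psi] = \langle (i\hbar\dd+A)(e^{2\Phi}\psi),\omega\rangle$, and using that $\langle e^{\Phi}(\dd\Phi)\psi, e^{\Phi}\omega\rangle = \langle e^{2\Phi}(\dd\Phi)\psi,\omega\rangle$ and $\Vert e^{\Phi}\omega\Vert^2 = \langle e^{2\Phi}\omega,\omega\rangle$ (both real), one checks by comparing real parts that the mixed term $2\Rp\langle i\hbar\, e^{\Phi}(\dd\Phi)\psi, e^{\Phi}\omega\rangle$ together with $\Vert e^{\Phi}\omega\Vert^2$ is exactly $\Rp\, q_{\hbar}[e^{2\Phi}\psi,\psi]$, which is the displayed identity.

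Then I would invoke the eigenvalue equation. Since $\psi\in D(\Lh)$ with $\Lh\psi=\lambda\psi$ and $e^{2\Phi}\psi\in D(q_{\hbar})$, the defining relation of $\Lh$ (extended from $\mathcal{C}_0^{\infty}(M)$ to the form domain by density) gives $q_{\hbar}[\psi,e^{2\Phi}\psi] = \langle\Lh\psi, e^{2\Phi}\psi\rangle = \lambda\langle\psi,e^{2\Phi}\psi\rangle = \lambda\int_M e^{2\Phi}\vert\psi\vert^2\,\dd q_g = \lambda\Vert e^{\Phi}\psi\Vert^2$, which is real; since $q_{\hbar}$ is Hermitian this forces $\Rp\, q_{\hbar}[e^{2\Phi}\psi,\psi] = \lambda\Vert e^{\Phi}\psi\Vert^2$. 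Substituting into the identity above yields exactly $q_{\hbar}(e^{\Phi}\psi) = \lambda\Vert e^{\Phi}\psi\Vert^2 + \hbar^2\Vert e^{\Phi}\psi\,\dd\Phi\Vert^2$.

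The only point requiring care — and where I expect to spend the most effort, even though it is routine — is checking that $e^{2\Phi}\psi\in D(q_{\hbar})$, so that the pairing $q_{\hbar}[e^{2\Phi}\psi,\psi]$ is defined and equals $\langle e^{2\Phi}\psi,\Lh\psi\rangle$, and that the Leibniz manipulations are legitimate for a merely Lipschitz $\Phi$. In all applications in the body (for instance the proof of Theorem \ref{Agmon}) the weight $\Phi=\Phi_m$ is bounded, and then $e^{2\Phi}\psi = e^{\Phi}(e^{\Phi}\psi)$ with $e^{\Phi}$ a bounded Lipschitz multiplier, which preserves $D(q_{\hbar})$, so there is nothing to prove. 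For a general Lipschitz $\Phi$ I would first establish the identity for the bounded truncations $\Phi^{(n)}=\min(\Phi,n)$ (still Lipschitz), and then pass to the limit $n\to\infty$ by monotone/dominated convergence — the same Fatou-type argument already used to conclude the proof of Theorem \ref{Agmon}.
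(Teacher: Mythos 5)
Your proof is correct and follows essentially the same Leibniz-rule-plus-real-parts computation as the paper: both expand $(i\hbar\dd+A)(e^{\Phi}\psi)$ and $(i\hbar\dd+A)(e^{2\Phi}\psi)$, compare real parts, and close with the eigenvalue equation, the only difference being that you organize the bookkeeping directly at the level of the sesquilinear form $q_{\hbar}[\cdot,\cdot]$ whereas the paper routes it through the commutator $[\Lh,e^{\Phi}]$. Your explicit attention to the domain question (needing only $e^{\Phi}\psi,e^{2\Phi}\psi\in D(q_{\hbar})$ rather than $e^{\Phi}\psi\in D(\Lh)$, and truncating an unbounded $\Phi$) is a welcome refinement of a point the paper's proof treats tacitly, but the underlying argument is the same.
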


\begin{proof}
First note that:
$$q_{\hbar}(e^{\Phi}\psi) = \langle \Lh e^{\Phi}\psi, e^{\Phi}\psi \rangle_{\Ld(M)} = \lambda \Vert e^{\Phi}\psi \Vert^2 + \langle [\Lh, e^{\Phi}] \psi, e^{\Phi} \psi \rangle_{\Ld(M)},$$
so we need to compute the bracket.
\begin{align*}
\langle & [\Lh,e^{\Phi}] \psi, e^{\Phi} \psi \rangle \\
&= \int_M \langle (i\hbar\dd +A)^*(i\hbar \dd + A) e^{\Phi}\psi,e^{\Phi}\psi \rangle \dd q - \int_M \langle e^{\Phi} (i\hbar\dd +A)^*(i\hbar \dd + A) \psi, e^{\Phi} \psi \rangle \dd q\\
&= \int_M \vert (i\hbar \dd +A)e^{\Phi}\psi \vert^2 \dd q - \int_M \langle (i\hbar \dd + A) \psi, (i\hbar \dd + A)e^{2\Phi}\psi \rangle \dd q\\
\end{align*}
On the one hand,
\begin{align*}
\int_M \langle (i\hbar \dd + A) \psi, (i\hbar \dd + A)e^{2\Phi}\psi \rangle \dd q = \int_M \left( \vert e^{\Phi} ( i\hbar \dd + A ) \psi \vert^2  + 2e^{2\Phi} \langle (i\hbar \dd + A) \psi,i\hbar \psi \dd \Phi \rangle \right) \dd q,
\end{align*}
and taking the real part:
\begin{align*}
\int_M \langle (i\hbar \dd + A) \psi, (i\hbar \dd + A)e^{2\Phi}\psi \rangle \dd q = \int_M \left( \vert e^{\Phi} ( i\hbar \dd + A ) \psi \vert^2  + 2 \Re e^{2\Phi} \langle (i\hbar \dd + A) \psi,i\hbar \psi \dd \Phi \rangle \right) \dd q.
\end{align*}
On the other hand,
\begin{align*}
\int_M \vert (i\hbar \dd + A ) e^{\Phi} \psi \vert ^2 \dd q = \int_M  &\vert e^{\Phi} (i\hbar \dd + A) \psi \vert^2 + \vert i\hbar \psi e^{\Phi} \dd \Phi \vert^2\\ &+ 2\Re \langle e^{\Phi} (i \hbar \dd + A) \psi, i \hbar \psi e^{\Phi} \dd \Phi \rangle \dd q,
\end{align*}
so we finally get:
$$\langle [ \Lh,e^{\Phi} ] \psi , e^{\Phi} \psi \rangle = \hbar^2 \Vert e^{\Phi} \psi \dd \Phi \Vert^2.$$
\end{proof}

In \cite{HeMo96}, the following theorem is proved in the case $M$ is compact or the Euclidean $\R^d$. Here we just adapted their proof for non-compact manifolds, with a possible boundary.

\begin{lemma}\label{manifoldinequality}
 Assume that $(M,g)$ is either compact or with bounded variations in the following sense : There exists a compact subset $\mathcal{K} \subset M$ and finitely many charts $$\Psi_n : U_n \rightarrow \mathcal{V} \subset \R^d, \quad 1 \leq n \leq n_0,$$ with $$M \setminus \mathcal{K} = \bigcup_{n=1}^{n_0} U_n,$$ under which the Riemannian metric satisfies
$$\frac{\partial g^{ij}}{\partial x_k} \text{is bounded for } 1 \leq i,j,k \leq d,$$
and
$$\frac{\partial \vert g \vert^{1/2}}{\partial x_k} \text{is bounded for } 1 \leq k \leq d.$$ Then if $\B$ is such that
\begin{align}\label{AssB}
\vert \nabla \B(q) \vert \leq C(1+ \vert \B(q) \vert),
\end{align}
 there exists $\hbar_0 >0$ and $C_0 >0$ such that, for $\hbar \in (0, \hbar_0]$,
$$\forall u \in D(q_{\hbar}), \quad (1+\hbar^{1/4}C_0) q_{\hbar}(u) \geq \int_{M} \hbar (b(q) - \hbar^{1/4} C_0) \vert u(q) \vert^2 \dd q.$$
\end{lemma}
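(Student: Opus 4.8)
The plan is to adapt the Helffer--Mohamed argument \cite{HeMo96} to the manifold setting. The idea is to localize $q_{\hbar}$ on balls small enough that on each one the magnetic field is essentially constant and the metric essentially flat, to use on each such ball the exact lower bound for a \emph{constant} magnetic field on $\R^d$ (the corresponding magnetic Laplacian having bottom of spectrum $\hbar\,\mathrm{Tr}^+\B$), and then to sum up, choosing the radii so that the total loss is of relative size $\hbar^{1/4}$.

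First I would take a partition of unity $(\chi_k)$ with $\sum_k\chi_k^2=1$, subordinate to a locally finite cover by balls $B_k=B(z_k,R_k)$ --- fixed charts on the compact core $\mathcal{K}$, the finitely many charts $\Psi_n$ of the hypothesis on $M\setminus\mathcal{K}$ --- with the radius \emph{adapted to the field}, $R_k=\hbar^{3/8}(1+\vert\B(z_k)\vert)^{-1/2}$, so that $\sum_k\vert\dd\chi_k(q)\vert^2\leq C\hbar^{-3/4}(1+\vert\B(q)\vert)$ by bounded overlap. The IMS localization formula (Lemma \ref{CalculQhPartitionUnity}) then gives
$$q_{\hbar}(u)=\sum_k q_{\hbar}(\chi_k u)-\hbar^2\sum_k\Vert u\,\dd\chi_k\Vert^2\geq\sum_k q_{\hbar}(\chi_k u)-C\hbar^{5/4}\int_M(1+\vert\B\vert)\vert u\vert^2\,\dd q_g.$$
By hypothesis (\ref{AssB}) a Gr\"onwall estimate bounds the relative variation of $\B$ over each $B_k$ by $\grandO(\hbar^{3/8})$, and since $1+\vert\B\vert\leq C(1+b)$ (equivalence with the trace-norm) the error term is $\leq C\hbar^{5/4}\bigl(\Vert u\Vert^2+\int_M b\,\vert u\vert^2\,\dd q_g\bigr)$.

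On each $B_k$ I would then carry out three approximations on $v:=\chi_k u$, extended by zero. (i) Put $\A$ in the Poincar\'e gauge centered at $z_k$, $\tilde{\A}=\A+\nabla\varphi_k$ with $\tilde{\A}(z_k)=0$, and linearize it as $\A_{lin}(x)=\B(z_k)(x-z_k)$; the Taylor remainder is $\vert\tilde{\A}-\A_{lin}\vert\leq C\sup_{B_k}\vert\nabla^2\tilde{\A}\vert\,R_k^2\leq C(1+\vert\B(z_k)\vert)R_k^2=C\hbar^{3/4}$ on $B_k$ --- the choice of $R_k$ is exactly what makes the factor $1+\vert\B(z_k)\vert$ cancel --- so expanding $\tilde{q}_{\hbar}(v)=q^{lin}_{\hbar}(v)+\Vert(\tilde{\A}-\A_{lin})v\Vert^2+2\Rp\langle(\tilde{\A}-\A_{lin})v,(i\hbar\nabla+\A_{lin})v\rangle$ and using Cauchy--Schwarz with $2\vert ab\vert\leq\hbar^{1/4}a^2+\hbar^{-1/4}b^2$ gives $\tilde{q}_{\hbar}(v)\geq(1-C\hbar^{1/4})q^{lin}_{\hbar}(v)-C\hbar^{5/4}\Vert v\Vert^2$. (ii) Freeze the metric, replacing $g^{ij}(x)$ by $g^{ij}(z_k)$ at the cost of a factor $1-C\hbar^{3/8}$ (bounded metric variation on $\mathcal{K}$, and by hypothesis at infinity). (iii) Reduce $g(z_k)$ to the Euclidean metric by a linear change of variables, which preserves the intrinsic eigenvalues $\beta_j(z_k)$ of $\B(z_k)$, and invoke the constant-field inequality $q^{flat}_{\hbar}(v)\geq\hbar\,b(z_k)\Vert v\Vert^2$. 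Undoing the gauge, $q_{\hbar}(\chi_k u)=\tilde{q}_{\hbar}(e^{-i\hbar^{-1}\varphi_k}\chi_k u)\geq(1-C\hbar^{1/4})\hbar\,b(z_k)\Vert\chi_k u\Vert^2-C\hbar^{5/4}\Vert\chi_k u\Vert^2$; summing over $k$ with $\sum_k\chi_k^2=1$ and using $b(z_k)\geq b(q)-C\hbar^{3/8}(1+\vert\B(q)\vert)$ on $B_k$ yields $q_{\hbar}(u)\geq(1-C\hbar^{1/4})\hbar\int_M b\,\vert u\vert^2\,\dd q_g-C\hbar^{5/4}\Vert u\Vert^2$, which rearranges to the stated inequality for a suitable $C_0$, the residual $\Vert u\Vert^2$-term being absorbed into the $-\hbar^{1/4}C_0$ inside the integral.

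The hard part is making all of these estimates uniform as $\vert q\vert\to\infty$, where $\vert\B\vert$ need not be bounded: the field-adapted radius $R_k\sim\hbar^{3/8}(1+\vert\B(z_k)\vert)^{-1/2}$ is what makes the linearization error $\hbar^{3/4}$-small with \emph{no} accompanying factor of $\vert\B(z_k)\vert$, while hypothesis (\ref{AssB}) is exactly what is needed to run the Gr\"onwall control of $\B$ on each $B_k$, and the bounded-geometry hypothesis on the metric at infinity keeps the metric-freezing error uniform. Once these uniformities are secured, the summation and the bookkeeping of exponents are routine, exactly as in \cite{HeMo96}.
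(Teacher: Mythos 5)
Your proposal follows essentially the same Helffer--Mohamed strategy as the paper: IMS localization on a field-adapted partition of unity with radii $\sim\hbar^{3/8}\langle\B\rangle^{-1/2}$, Poincar\'e gauge plus linearization of the potential, Cauchy--Schwarz with an $\hbar^{1/4}$ split, freezing of the metric, the constant-field lower bound $\hbar\,\mathrm{Tr}^+\B$, and the same choice of exponents ($\alpha=3/8$, $\beta=1/8$). The only cosmetic difference is that the paper performs the localization in two stages (first fixed charts on which $\langle\B\rangle$ is comparable to $\langle\B(z_m)\rangle$ via the Gr\"onwall argument, then an $\hbar$-fine partition inside each chart) and picks $z_{m,j}$ maximizing $b$ on each small ball rather than using a Lipschitz bound for $b(z_k)$, but the estimates and conclusion are identical.
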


\begin{proof}
Take $(\chi_m)_{m \geq 0}$ a smooth partition of unity on $M$, such that:
$$\sum_{m \geq 0} \chi_m^2 = 1 \quad \text{and} \quad \sum_{m \geq 0} \vert \dd \chi_m(q) \vert ^2 \leq C, \quad \forall q \in M,$$
with $\supp (\chi_m) \subset \mathcal{V}_m$ a bounded local chart. Then by Lemma \ref{CalculQhPartitionUnity} (below), for any $u \in D(q^{\hbar})$,
$$q_{\hbar} (u) = \sum_{m \geq 0} q_{\hbar}(\chi_m u) - \hbar^2 \sum_{m \geq 0} \Vert  u \dd \chi_m \Vert^2 \geq \sum_{m \geq 0} q_{\hbar}(\chi_m u) - C \hbar^2 \Vert u \Vert^2,$$
and we can deal with every $q_{\hbar}(\chi_m u)$ in local coordinates $x=(x_1,...,x_d)$: we can write $q_{\hbar}(\chi_m u) = q_{\hbar}^{\text{coord}}(\tilde{\chi}_m\tilde{u})$, where $\tilde{u}$ stands for $u$ written in coordinates. We denote $\langle \B(x) \rangle = ( 1+ \vert \B(x) \vert^2 )^{1/2}$. Under assumption (\ref{AssB}), up to taking $\mathcal{V}_m$ small enough, we can find $z_m \in M$ and $C>0$ such that:
\begin{align}\label{qdrty}
C^{-1} \langle \B(x) \rangle \leq \langle \B(z_m) \rangle \leq C \langle \B(x) \rangle, \quad \forall x \in \mathcal{V}_m.
\end{align}
Indeed, by (\ref{AssB}), if we denote $M(\varepsilon) = \sup_{\vert x- y \vert \leq \varepsilon} \frac{\langle \B(x) \rangle}{\langle \B(y)\rangle}$ and $m(\varepsilon) = \inf_{\vert x- y \vert \leq \varepsilon} \frac{\langle \B(x) \rangle}{\langle \B(y)\rangle}$, we have:
$$\forall \vert x - y \vert \leq \varepsilon, \exists c_{xy} \in [x,y], \langle \B(x) \rangle \leq \langle \B(y) \rangle + C \langle \B(c_{xy}) \rangle \vert x - y \vert,$$
which implies $$M(\varepsilon) \leq 1 + C M(\varepsilon) \varepsilon,$$
and for $\varepsilon < 1/2C$, $$M(\varepsilon) \leq 2.$$ Similarily, we have
$$\frac{\langle \B(x) \rangle}{\langle \B(y) \rangle} \geq 1 - C \frac{\langle \B(c_{cy}) \rangle}{\langle \B(y) \rangle} \vert x - y \vert \geq 1 - C M(\varepsilon) \vert x - y \vert \geq 1 - 2C \varepsilon.$$
Rescaling a standard partition of unity on $\R^d$, we can find a new partition of unity $(\chi_{m,j}^{\hbar})_{j \geq 0}$ on $\mathcal{V}_m$ such that:
\begin{align}\label{qdrttt}
\sum_{j \geq 0} \vert \chi_{m,j}^{\hbar} \vert ^2 =1, \quad \text{and} \quad \sum_{j \geq 0} \vert \dd \chi_{m,j}^{\hbar}(x) \vert^2 \leq C \langle \B(z_m) \rangle \hbar^{-2 \alpha},
\end{align}
where $C>0$ does not depend on $m$, and with
\begin{align}\label{DefBmj}
\supp (\chi_{m,j}^{\hbar}) \subset \mathcal{B}_{m,j} := \lbrace x : \vert x-y_{m,j} \vert \leq \langle \B(z_{m}) \rangle ^{-1/2}\hbar^{\alpha} \rbrace.
\end{align}
Then for any $u \in \mathcal{C}^{\infty}_0(\mathcal{V}_m)$,
\begin{align}
q_{\hbar}(u) &\geq \sum_j q_{\hbar}(\chi_{m,j} u) - C \langle \B(z_m) \rangle \hbar^{2-2 \alpha} \Vert u \Vert^{2}\\
& \label{inequalityQhBmj}\geq \sum_j q_{\hbar}(\chi_{m,j} u) - C \hbar^{2-2 \alpha} \int (b(x) + 1) \vert u \vert^{2} \dd x_g,
\end{align}
because $\langle \B(z_m) \rangle \leq C \langle \B(x) \rangle \leq C' (b(x)+1)$.
Since $b$ is continuous, on each $\mathcal{B}_{m,j}$ we can choose $z_{m,j}$ such that 

\begin{align}\label{qsdminoration}
b(z_{m,j}) \geq b(x), \quad \forall x \in \mathcal{B}_{m,j}.
\end{align}
On each $\mathcal{B}_{m,j}$, we will approximate the magnetic field by a constant. Up to a gauge transformation, we can assume that the vector potential vanishes at $z_{m,j}$. In other words, we can find a smooth function $\varphi_{m,j}$ on $\mathcal{B}_{m,j}$ such that $$\tilde{\A}(z_{m,j})=0,$$
where $\tilde{\A} = \A + \nabla \varphi_{m,j}.$ The potential $\tilde{\A}$ defines the same magnetic field $\B$ as $\A$. Let us define
$$\A_{lin}(x) = \B (z_{m,j}). (x-z_{m,j}),$$
so that
$$\vert \tilde{\A}(x) - \A_{lin}(x) \vert \leq \frac{1}{2}\Vert \nabla \B \Vert_{\mathcal{B}_{j,m}} \vert x- z_{m,j} \vert^2, \quad \text{on } \mathcal{B}_{m,j},$$
and using (\ref{AssB}) and (\ref{qdrty}),
\begin{align}\label{sqdrtt}
\vert \tilde{\A}(x) - \A_{lin}(x) \vert  \leq C \langle \B(z_{m,j}) \rangle \vert x - z_{m,j} \vert^2, \quad \text{on } \mathcal{B}_{m,j}.
\end{align}
Then if $\tilde{q}_{\hbar}$ denotes the quadratic form for the new potential $\tilde{\A}$, for $v \in \mathcal{C}_0^{\infty}(\mathcal{B}_{m,j})$,
$$\tilde{q}_{\hbar}(v) = q^{lin}_{\hbar}(v) + \Vert (\tilde{\A} - \A_{lin})v \Vert^2 + 2 \Re \langle (\tilde{\A} - \A_{lin})v , (i\hbar \nabla + \A_{lin})v \rangle,$$
and using (\ref{sqdrtt}) and the Cauchy-Schwarz inequality,
$$\tilde{q}_{\hbar}(v) \geq q_{\hbar}^{lin}(v) -2C \Vert \langle \B(z_{m,j}) \rangle \vert x -z_{m,j} \vert^2 v \Vert \sqrt{q_{\hbar}^{lin}(v)} .$$
We use $2 \vert a b \vert \leq \varepsilon^2 a^2 + \varepsilon^{-2} b^2$ to get:
\begin{align*}
\tilde{q}_{\hbar}(v) &\geq  \left( 1- C \hbar^{2\beta} \right) q_{\hbar}^{lin}(v) - C \hbar^{-2\beta} \Vert \langle \B(z_{m,j} ) \rangle  \vert x - z_{m,j} \vert^2 v \Vert^2\\
& \geq \left( 1- C \hbar^{2\beta} \right) q_{\hbar}^{lin}(v) - \tilde{C} \hbar^{4 \alpha-2\beta} \Vert  v \Vert^2  \quad \text{by (\ref{qdrty}) and (\ref{DefBmj}).}
\end{align*}
Changing $\A$ into $\tilde{\A}$ amounts to conjugate the magnetic Laplacian by $e^{i\hbar^{-1} \varphi_{j,m}}$, so:
$$\tilde{q}_{\hbar}(v) = q_{\hbar}(e^{i \hbar^{-1} \varphi_{j,m}}v).$$ We get for $v \in \mathcal{C}_0^{\infty}(\mathcal{B}_{m,j})$:
\begin{align*}
q_{\hbar}(v) & \geq \left( 1- C \hbar^{2\beta} \right) q_{\hbar}^{lin}(e^{-i \hbar^{-1} \varphi_{j,m}}v) - \tilde{C} \hbar^{4 \alpha-2\beta} \Vert e^{i \hbar^{-1} \varphi_{j,m}} v \Vert^2\\
& \geq \left( 1- C \hbar^{2\beta} \right) q_{\hbar}^{lin}(e^{-i \hbar^{-1} \varphi_{j,m}}v) - \tilde{C} \hbar^{4 \alpha-2\beta} \Vert v \Vert^2.
\end{align*}
It remains to estimate $q_{\hbar}^{lin}(v)$. Using the assumptions on $M$,
\begin{align*}
q_{\hbar}^{lin}(v) & = \sum_{k,l} \int \vert g(x) \vert^{1/2} g^{kl}(x) (i \hbar \partial_k v + A^{lin}_k v)\overline{(i\hbar \partial_l v + A_l^{lin} v)} \dd x\\
& \geq \left( 1 - C \hbar^{\alpha} \right)\sum_{k,l} \int \vert g(z_{m,j}) \vert^{1/2} g^{kl}(z_{m,j}) (i \hbar \partial_k v + A_k^{lin} v)\overline{(i\hbar \partial_l v + A_l^{lin} v)} \dd x.\\
\end{align*}
For this new Schrödinger operator with constant magnetic field on a flat metric, the desired inequality is well known:
\begin{align*}
q_{\hbar}^{lin}(v) & \geq \left( 1 - C \hbar^{\alpha} \right) \hbar \int b(z_{m,j}) \vert v \vert^2 \vert g(z_{m,j}) \vert^{1/2} \dd x\\
& \geq \left( 1 - C \hbar^{\alpha} \right) \hbar \int b(x) \vert v \vert^2 \vert g(z_{m,j}) \vert^{1/2} \dd x\\
& \geq \left( 1 - C_1 \hbar^{\alpha} \right) \hbar \int b(x) \vert v \vert^2 \vert g(x) \vert^{1/2} \dd x.
\end{align*}
because of (\ref{qsdminoration}) and the assumptions on $M$. Thus,
\begin{align*}
q_{\hbar}^{lin}(e^{-i \hbar^{-1} \varphi_{m,j}}v) \geq \left( 1 - C_1 \hbar^{\alpha} \right) \hbar \int b(x) \vert v \vert^2 \vert g(x) \vert^{1/2} \dd x.
\end{align*}
 Finally, we get a $C_0 >0$ such that, for $\hbar$ small enough,
\begin{align*}
(1+ C_0 \hbar^{2 \beta} + C_1 \hbar^{\alpha}) q_{\hbar}(u) \geq & \ \hbar \int_M b(x) \vert u \vert^2 \dd x_g -\tilde{C}\hbar^{4 \alpha - 2 \beta} \Vert u \Vert^2 - C \hbar^2 \Vert u \Vert^2\\ &- C_0 \hbar^{2-2 \alpha} (\int_M b(x) \vert u \vert^2 \dd x_g + \Vert u \Vert^2),
\end{align*}
where the last part comes from (\ref{inequalityQhBmj}). The desired inequality follows if we choose $\beta = 1/8$ and $\alpha = 3/8$.
\end{proof}

\begin{lemma}\label{CalculQhPartitionUnity}
If $(\chi_m)_{m \geq 0}$ is a smooth partition of unity on $M$, such that
$$\sum_{m \geq 0} \chi_m^2 =1,$$
then for any $u \in D(q_{\hbar})$:
$$q_{\hbar}(u) = \sum_{m \geq 0} q_{\hbar}(\chi_m u) - \hbar^2 \sum_{m \geq 0} \Vert  u \dd \chi_m\Vert^2.$$
\end{lemma}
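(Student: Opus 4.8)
The statement is the magnetic analogue of the IMS localization formula, and the proof is a direct pointwise computation. The plan is: expand $(i\hbar\dd+A)(\chi_m u)$ by the Leibniz rule, take pointwise squared norms, sum over $m$, exploit the algebraic identity $\sum_m \chi_m\,\dd\chi_m = \tfrac12\,\dd\bigl(\sum_m\chi_m^2\bigr)=0$ to kill the cross terms, and integrate over $M$.

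Fix $u\in D(q_\hbar)$ and $m\geq 0$. Since $\chi_m$ is a real-valued smooth scalar function, the Leibniz rule for $\dd$ gives, pointwise on $M$ as an $L^2$ $1$-form,
$$(i\hbar\dd+A)(\chi_m u) = \chi_m\,(i\hbar\dd+A)u + i\hbar\, u\,\dd\chi_m .$$
In particular $\chi_m u\in D(q_\hbar)$: it belongs to $\Ld(M)$, it vanishes on $\partial M$, and the right-hand side is square-integrable (the first term is a bounded function times an $\Ld$ $1$-form, the second is $u$ times the bounded form $i\hbar\,\dd\chi_m$), so $q_\hbar(\chi_m u)$ is well defined. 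Taking the pointwise norm $\vert\cdot\vert^2_{g^*_q}$ and using that $\chi_m$ and $\dd\chi_m$ are real, we get pointwise
$$\vert (i\hbar\dd+A)(\chi_m u)\vert^2 = \chi_m^2\,\vert(i\hbar\dd+A)u\vert^2 + \hbar^2\,\vert u\vert^2\,\vert\dd\chi_m\vert^2 + 2\chi_m\,\Re\bigl\langle (i\hbar\dd+A)u,\; i\hbar\, u\,\dd\chi_m\bigr\rangle .$$

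Now sum over $m$. The partition $(\chi_m)_{m\geq 0}$ is locally finite, so at each point only finitely many terms are nonzero. For the first term, $\sum_m\chi_m^2\equiv 1$. For the cross term, using the pointwise identity
$$\sum_m \chi_m\,\dd\chi_m = \frac12\,\dd\Bigl(\sum_m\chi_m^2\Bigr) = \frac12\,\dd(1) = 0 ,$$
we obtain $\sum_m 2\chi_m\,\Re\langle (i\hbar\dd+A)u, i\hbar u\,\dd\chi_m\rangle = 2\Re\langle (i\hbar\dd+A)u,\, i\hbar u\sum_m\chi_m\dd\chi_m\rangle = 0$ pointwise. Hence, pointwise on $M$,
$$\sum_m \vert (i\hbar\dd+A)(\chi_m u)\vert^2 = \vert(i\hbar\dd+A)u\vert^2 + \hbar^2\,\vert u\vert^2\sum_m\vert\dd\chi_m\vert^2 .$$
Integrating this identity over $M$ against $\dd q_g$ and interchanging $\sum_m$ with $\int_M$ yields exactly
$$\sum_m q_\hbar(\chi_m u) = q_\hbar(u) + \hbar^2\sum_m\Vert u\,\dd\chi_m\Vert^2 ,$$
which is the claimed formula.

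There is essentially no serious obstacle here; the only point needing a word of care is the interchange of the sum $\sum_m$ with the integral $\int_M$. This is justified by the local finiteness of the partition of unity together with monotone convergence applied to the partial sums $\sum_{m\leq M_0}$, whose integrands are nonnegative after the cancellation above and are dominated (for the main and cross pieces) by the fixed integrable functions coming from $\sum_m\chi_m^2\equiv 1$. (In the applications in the body of the paper one moreover has $\sum_m\vert\dd\chi_m\vert^2\leq C$, so the last sum on the right is finite and the whole identity is a genuine equality of numbers.)
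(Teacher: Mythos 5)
Your proof is correct and follows essentially the same route as the paper's (the standard IMS localization computation via the Leibniz rule and the identity $\sum_m \chi_m\,\dd\chi_m = \tfrac12\,\dd(\sum_m\chi_m^2)=0$); your version is marginally tidier because you expand $(i\hbar\dd+A)(\chi_m u)$ directly so the cross term cancels in one stroke, whereas the paper writes $\chi_m(i\hbar\dd+A)u$ in terms of $(i\hbar\dd+A)(\chi_m u)$ and must expand the cross term once more before the cancellation appears.
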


\begin{proof}
\begin{align*}
q_{\hbar}(u) &= \sum_m \int \vert \chi_m  (i \hbar \dd + A) u \vert^2 \dd q_g\\
&= \sum_m \int \vert (i \hbar \dd + A) (\chi_m u) - [i \hbar \dd + A, \chi_m] u \vert^2 \dd q_g\\
&= \sum_m \int \vert ( i \hbar \dd + A)(\chi_m u) - i \hbar u (\dd \chi_m) \vert^2 \dd q_g\\
&= \sum_m \int \vert ( i \hbar \dd + A)(\chi_m u) \vert^2 + \hbar^2 \vert u \dd \chi_m \vert^2 - 2 \Re \langle (i \hbar \dd + A)(\chi_m u), i \hbar u \dd \chi_m \rangle \dd q_g\\
&= \sum_m q_{\hbar}(\chi_m u) + \hbar^2 \Vert u \dd \chi_m \Vert^2- \int 2 \Re \langle (i \hbar \dd + A)(\chi_m u), i \hbar u \dd \chi_m \rangle \dd q_g.
\end{align*}
Moreover,
\begin{align*}
\langle (i \hbar \dd + A)(\chi_m u), i \hbar u \dd \chi_m \rangle &= \langle i \hbar u \dd \chi_m + i \hbar \chi_m \dd u + \chi_m u A, i \hbar u \dd \chi_m \rangle\\
&= \hbar^2 \vert u \dd \chi_m \vert^2 + \hbar^2 \langle \chi_m \dd u, u \dd \chi_m \rangle  -  i \hbar \underbrace{ \vert u \vert^2 \langle A, \dd \chi_m \rangle}_{\text{real}}.
\end{align*}
Thus,
\begin{align*}
q_{\hbar}(u) &= \sum_m \left( q_{\hbar}(\chi_m u) - \hbar^2 \Vert u \dd \chi_m \Vert^2 \right) + 2 \hbar^2 \Re \int \sum_m \langle \chi_m \dd u, u \dd \chi_m \rangle \dd q_g\\
&= \sum_m \left( q_{\hbar}(\chi_m u) - \hbar^2 \Vert u \dd \chi_m \Vert^2 \right) +  \hbar^2 \Re \int \sum_m \langle \bar{u} \dd u, 2 \chi_m \dd \chi_m \rangle \dd q_g\\
&= \sum_m \left( q_{\hbar}(\chi_m u) - \hbar^2 \Vert u \dd \chi_m \Vert^2 \right) +  \hbar^2 \Re \int  \langle \bar{u} \dd u, 2 \underbrace{\dd \left( \sum_m \chi_m^2 \right)}_{=0} \rangle \dd q_g\\
&= \sum_m \left( q_{\hbar}(\chi_m u) - \hbar^2 \Vert u \dd \chi_m \Vert^2 \right)
\end{align*}
\end{proof}

\section*{Acknowledgement}

I would like to thank Nicolas Raymond and San Vu Ngoc for many stimulating discussions, helpful advices, and their readings of the preliminary drafts of this version.

\end{document}